\newcommand\thickbar[1]{\accentset{\rule{.4em}{.6pt}}{#1}}
\newcounter{mylisti} \newcounter{mylistii}
\newcounter{nest}
\newcommand{\defaultlabel}{}
\newenvironment{mylist}[1]{%
  \addtocounter{nest}{1}
  \ifthenelse{\value{nest}=1}{%
    \renewcommand{\defaultlabel}{(\roman{mylisti})\hfill}}{%
    \renewcommand{\defaultlabel}{(\alph{mylistii})\hfill}}
  \begin{list}{\defaultlabel}{%
      \ifthenelse{\value{nest}=1}{\usecounter{mylisti}}{%
        \usecounter{mylistii}}
      
      \addtolength{\itemsep}{0.5ex}
      \settowidth{\labelwidth}{#1}
      \setlength{\leftmargin}{\labelwidth}
      \addtolength{\leftmargin}{\labelsep}}}{\addtocounter{nest}{-1}
\end{list}}
\newcommand{\fb}{\ensuremath{\thickbar{f}}}
\newcommand{\bn}{\ensuremath{\mathbb N}}
\newcommand{\br}{\ensuremath{\mathbb R}}
\newcommand{\bz}{\ensuremath{\mathbb Z}}
\newcommand{\Ht}{\ensuremath{\widetilde{H}}}
\newcommand{\vu}{\ensuremath{\boldsymbol{\mathrm{u}}}}
\newcommand{\vv}{\ensuremath{\boldsymbol{\mathrm{v}}}}
\newcommand{\abs}[1]{\lvert #1\rvert}
\newcommand{\bigabs}[1]{\big\lvert #1\big\rvert}
\newcommand{\aut}{\operatorname{Aut}}
\newcommand{\diam}{\operatorname{diam}}
\newcommand{\dist}{\ensuremath{\mathrm{dist}}}
\newcommand{\ceil}[1]{\ensuremath{\lceil #1\rceil}}
\newcommand{\meet}{\wedge}
\newcommand{\norm}[1]{\lVert #1\rVert}
\newcommand{\Bignorm}[1]{\Big\lVert #1\Big\rVert}
\newcommand{\symdif}{\bigtriangleup}
\newcommand{\E}{\exists\,}
\newcommand{\vare}{\varepsilon}
\newcommand{\varf}{\varphi}
\renewcommand{\geq}{\geqslant}
\renewcommand{\leq}{\leqslant}
\newcommand{\ds}{\displaystyle}
\newcommand{\Eg}{\textit{E.g.,}\ }
\newcommand{\ie}{\textit{i.e.,}\ }
\newcommand{\cf}{\textit{cf.}\ }
\newcommand{\etc}{\textit{etc.}\ }
\newtheorem{thm}{Theorem}
\newtheorem{mainthm}{Theorem}
\newtheorem*{corD}{Corollary D}
\newtheorem*{mainproblem*}{Problem}
\newtheorem{lem}[thm]{Lemma}
\newtheorem{prop}[thm]{Proposition}
\newtheorem{problem}[thm]{Problem}
\theoremstyle{definition}
\theoremstyle{remark}
\newtheorem*{rem}{Remark}
\newtheorem*{ex}{Example}
\DeclareMathOperator{\cay}{Cay}
\DeclareMathOperator{\clo}{Clo}
\DeclareMathOperator{\cprod}{\square}
\DeclareMathOperator{\la}{La}
\DeclareMathOperator{\tsp}{tsp}
\newcommand{\bin}{\ensuremath{\mathrm{B}}}
\newcommand{\coa}{\ensuremath{\ast}}
\newcommand{\com}{\ensuremath{\mathrm{K}}}
\newcommand{\cyc}{\ensuremath{\mathrm{C}}}
\newcommand{\ham}{\ensuremath{\mathrm{H}}}
\newcommand{\pa}{\ensuremath{\mathrm{P}}}
\newcommand{\rose}{\ensuremath{\mathrm{Ro}}}
\newcommand{\st}{\ensuremath{\mathrm{St}}}
\begin{document}

    
\title{On the bi-Lipschitz geometry of lamplighter graphs}

\author{F.~Baudier}
\address{Department of Mathematics, Texas A\&M University, College
  Station, TX 77843, USA}
\email{florent@tamu.edu}

\author{P.~Motakis}
\address{Department of Mathematics, University of Illinois at
  Urbana-Champaign, Urbana, IL 61801, USA}
\email{pmotakis@illinois.edu}

\author{Th.~Schlumprecht}
\address{Department of Mathematics, Texas A\&M University, College
  Station, TX 77843, USA and Faculty of Electrical Engineering, Czech
  Technical University in Prague,  Zikova 4, 166 27, Prague}
\email{schlump@math.tamu.edu}

\author{A.~Zs\'ak}
\address{Peterhouse, Cambridge, CB2 1RD, UK}
\email{a.zsak@dpmms.cam.ac.uk}

\date{\today}

\thanks{The first named author was supported by the National Science
  Foundation under Grant Number DMS-1800322. The second named author
  was  supported by National Science Foundation under Grant Numbers
  DMS-1600600 and DMS-1912897. The third  named author was supported by the National
  Science Foundation under Grant Numbers DMS-1464713 and
  DMS-1711076. The  fourth author was supported by  the 2018 Workshop
  in  Analysis and Probability at Texas A\&M   University.}

\keywords{Lamplighter graphs, Wreath products, Embeddings  of graphs
  into $\ell_1$ and other Banach spaces}
\subjclass[2010]{05C05, 05C12, 46B85}

\begin{abstract}
  In this article we start a systematic study of the bi-Lipschitz
  geometry of lamplighter graphs. We prove that lamplighter graphs
  over trees bi-Lipschitzly embed into Hamming cubes with distortion
  at most~$6$. It follows that lamplighter graphs over countable trees
  bi-Lipschitzly embed into $\ell_1$. We study the metric behaviour of
  the operation of taking the lamplighter graph over the
  vertex-coalescence of two graphs. Based on this analysis, we provide
  metric characterizations of superreflexivity in terms of lamplighter
  graphs over star graphs or rose graphs. Finally, we show that the
  presence of a clique in a graph implies the presence of a Hamming cube
  in the lamplighter graph over it. An application is a characterization in terms of a sequence of graphs with uniformly bounded degree of the notion of trivial Bourgain-Milman-Wolfson type for arbitrary metric spaces, similar to Ostrovskii's characterization previously obtained in \cite{ostrovskii:11}.
\end{abstract}

\maketitle

\section{Introduction}

Wreath products of groups provide a wealth of fundamental examples
with various algebraic, spectral and geometric properties. Given two
groups $\Gamma_1$ and $\Gamma_2$, we denote by $\Gamma_2^{(\Gamma_1)}$
the set of all functions $f\colon\Gamma_1\to \Gamma_2$ with finite
support, \ie with $\{x\in\Gamma_1:\,f(x)\neq e_{\Gamma_2}\}$ finite,
where $e_{\Gamma_2}$ is the identity element of $\Gamma_2$. This is a
group with pointwise multiplication. We let
$\lambda\colon \Gamma_1\to\aut\big(\Gamma_2^{(\Gamma_1)}\big)$ denote
the left-regular representation given by $\lambda(x)(f)=f^x$, where
$f^x(y)=f(x^{-1}y)$. The \emph{(restricted) wreath product
  $\Gamma_2\wr \Gamma_1$ of $\Gamma_2$ with $\Gamma_1$ }is then
defined as the semi-direct product
$\Gamma_2^{(\Gamma_1)}\rtimes_\lambda \Gamma_1$. It is the group of
all pairs $(f,x)$, where $f\in\Gamma_2^{(\Gamma_1)}$ and
$x\in\Gamma_1$, equipped with the product
$(f,x)\cdot(g,y)=(fg^x,xy)$. When $\Gamma_2=\bz_2$ (the cyclic group
of order $2$), the wreath product $\bz_2\wr \Gamma_1$ is commonly
referred to as \emph{the lamplighter group of $\Gamma_1$}. We shall
often identify $\bz_2^{(\Gamma_1)}$ with the set of all finite subsets
of $\Gamma_1$. Under this identification, pointwise product becomes
symmetric difference, and hence the group operation of
$\bz_2\wr \Gamma_1$ is given by $(A,x)\cdot(B,y)=(A\symdif xB,xy)$,
where $xB=\{xb:\,b\in B\}$.

The group $\bz_2\wr \bz$ is an example of an amenable group with
exponential growth. Random walks on wreath product groups have been
extensively studied and are well known to exhibit interesting
behaviours. In an influential article~\cite{kv:83},
Ka\u{\i}manovich and Vershik showed that $\bz_2\wr \bz$ is an example
of a group of exponential growth for which the simple random walk on
the Cayley graph has zero speed. The variety of geometric features of
wreath products of groups has also come to play an important role,
sometimes quite unexpectedly, in metric geometry. For instance, the
geometry of $\bz\wr\bz$ is closely connected to the extension of
Lipschitz maps~\cite{naor-peres:11}, and is also used in
distinguishing bi-Lipschitz invariants, namely Enflo type and edge
Markov type~\cite{naor-peres:08}.

In geometric group theory, the theory of compression exponents has
undergone a detailed study, in particular the behaviour of compression
exponents under taking wreath products. Compression exponents were
introduced by Guentner and Kaminker in order to measure how well an
infinite, finitely generated group that does not admit a bi-Lipschitz
embedding into a certain metric space, can be faithfully represented
in it. A deep result of Naor and Peres states that the
$\ell_1$-compression of a lamplighter group over a group with at least
quadratic growth is~$1$. This result includes the case of the planar
lamplighter group $\bz_2\wr \bz^2$. However, it is not known whether
$\bz_2\wr \bz^2$ bi-Lipschitzly embeds into $\ell_1$. This challenging
problem was raised by Naor and Peres
in~\cite{naor-peres:11}. Understanding the $\ell_1$-embeddability of
graphs is motivated by its profound connections with the design of
efficient algorithms for some NP-hard problems
(see~\cite{deza-laurent:97}*{Chapter~10},
\cite{got:18}*{Chapter~8, Chapter~43}, and~\cite{naor:10}). Very
little is known about the bi-Lipschitz embeddability of lamplighter
groups into Banach spaces. The Euclidean distortion of $\bz_2\wr\bz_k$
is of the order $\sqrt{\log k}$. The lower bound was proved
in~\cite{lnp:09} and the upper bound in~\cite{anv:10}. It was shown
in~\cite{naor-peres:08} that $\bz_2\wr\bz_k$ bi-Lipschitzly embeds
into $\ell_1$ with some distortion independent of $k$ (and thus so
does $\bz_2\wr \bz$). In~\cite{RO2018}, it was proved that a Banach space is superreflexive if and only if it does not contain bi-Lipschitz copies of $\bz_2\wr\bz_k$ (for every $k\in \bn$ and with uniformly bounded distortions). In~\cite{csv:12}, Cornulier, Stadler and Valette
proved that for a finitely generated group $\Gamma$ and for a finitely
generated free group $\mathrm{F}$, the equivariant $L_1$-compression
of $\Gamma\wr \mathrm{F}$ is equal to that of $\Gamma$. It follows
from this that $\bz_2\wr \mathrm{F}$ bi-Lipschitzly embeds into
$\ell_1$.

Working with groups might be restrictive because relatively few graphs
can be realized as Cayley graphs of groups. In this paper we consider
the most general graph-theoretic setting and we will be concerned
with the metric geometry of lamplighter graphs. We anticipate that
working in this more flexible framework will be fruitful to construct
new graphs with subtle geometric properties. Moreover, lamplighter
graphs are generalizations of the wreath product construction in group
theory and our results apply to lamplighter groups as well. Indeed, in
the context of graph theory it is possible to define a notion of the
wreath product of two graphs that is compatible with the wreath
product construction in group theory in the sense that the wreath
product of two Cayley graphs of groups is the Cayley graph of the
wreath product of the two groups for a well-chosen set of generators
(\cf~\cite{donno:15}). For practical purposes which will be explained
in the next section, we chose to work with the \emph{walk/switch model
}of the lamplighter graph over a graph $G$, simply denoted
$\la(G)$. Specifically, $\la(G)$ is the graph whose vertex
set consists of all pairs $(A,x)$ where $A$ is a finite subset of
the vertex set of $G$, and $x$ is a vertex of $G$. Vertices $(A,x)$
and $(B,y)$ of $\la(G)$ are joined by an edge if and only if
\emph{either }$A=B$ and $xy$ is an edge in $G$ \emph{or }$x=y$ and
$A\symdif B=\{x\}$. A well known description of this graph is as
follows. Assume there is a lamp attached to each vertex of $G$ and a
lamplighter is able to walk along edges of $G$ and switch lights on
and off. A vertex $(A,x)$ corresponds to the lamplighter standing at
vertex $x$ of $G$ with $A$ being the set of lamps that are currently
lit. The lamplighter can make one of two types of moves: he can either
move to a neighbouring vertex of $G$ without changing the configuration
of lamps that are lit, or he can change the state of the lamp at
vertex $x$ and stay at vertex $x$.  We will refer to these as
\emph{horizontal} and \emph{vertical }moves,
respectively. (See~Figure~\ref{fig:lamplighter}.)

\begin{figure}[ht]
  \label{fig:lamplighter}
  \caption{Horizontal moves within fibers and vertical moves between
    fibers of the lamplighter graph}
  \vskip .5cm
  \includegraphics[scale=.5]{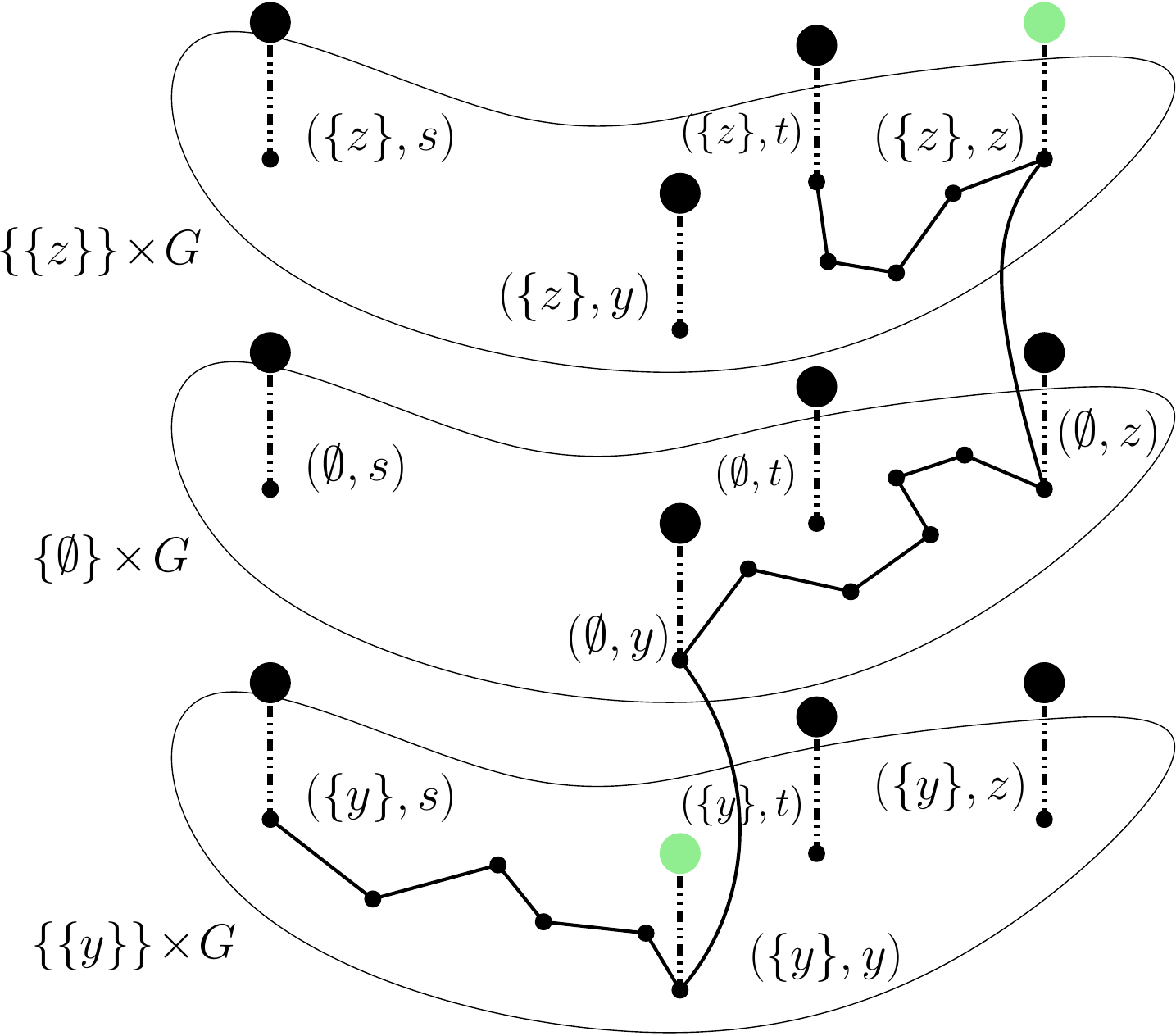}
\end{figure}

Other models with different available moves can also be considered,
such as the move-and-switch/move model or the like. Note that just as
different finite generating sets of a group lead to bi-Lipschitzly
equivalent Cayley graphs, it is also easy to verify whether two models
of lamplighter graphs are bi-Lipschitzly equivalent. Here we are
talking about graphs as metric spaces with the geodesic distance. We
will recall this and other standard graph-theoretic notions in
Section~\ref{sec:preliminaries}.

Our first main result is about lamplighter graphs over arbitrary trees.

\begin{mainthm}
  \label{mainthm:A}
  Let $T$ be a (non-empty) tree. Then there is a set $I$ such that
  $\la(T)$ bi-Lipschitzly embeds into the Hamming cube $\ham_I$. More
  precisely, there exists a map $f\colon \la(T)\to \ham_I$ such that
  \begin{equation}
    \tfrac12\cdot d_{\la(T)}(x,y)\leq d_{\ham}\big(f(x),f(y)\big)\leq
    3\cdot d_{\la(T)}(x,y)
  \end{equation} 
  for all $x,y\in \la(T)$. Moreover, if $T$ is finite or countable,
  then $I$ can also be chosen to be finite or countable,
  respectively.
\end{mainthm}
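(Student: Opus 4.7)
The plan is to build $f$ from three families of Boolean coordinates tracking the three additive pieces of a Steiner-tree formula for $d_{\la(T)}$, and to verify both Lipschitz bounds by direct coordinate bookkeeping.

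First I will record the identity
\begin{equation*}
  d_{\la(T)}\bigl((A,x),(B,y)\bigr) \;=\; |A \symdif B| \,+\, 2\,|E(\tau)| \,-\, d_T(x,y),
\end{equation*}
where $\tau$ is the Steiner subtree of $T$ on $\{x,y\} \cup (A \symdif B)$. This follows by decomposing any lamplighter walk into lamp-toggles (one per element of $A \symdif B$) and a traversal of $T$ from $x$ to $y$ visiting every vertex of $A \symdif B$; on a tree this traversal must cover every edge of $\tau$, using edges of the $x$-to-$y$ geodesic exactly once and the remaining edges of $\tau$ exactly twice. Writing $p_e(v) \in \{0,1\}$ for the side of $v \in V(T)$ in $T \setminus \{e\}$ and $T_e^0, T_e^1$ for the two components of $T \setminus \{e\}$, the walk-length contribution of an edge $e$ is $1$ when $p_e(x) \neq p_e(y)$ (\textbf{case 1}), is $2$ when $p_e(x) = p_e(y)$ and $(A \symdif B) \cap T_e^{1-p_e(x)} \neq \emptyset$ (\textbf{case 2}), and is $0$ otherwise.

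Next I will define $f$ on an index set $I = V(T) \sqcup E(T) \sqcup J$ using three coordinate families: \emph{lamp coordinates} $[v \in A]$ for $v \in V(T)$ (Hamming contribution $|A \symdif B|$); \emph{side coordinates} $[x \in L_e]$ for $e \in E(T)$, the standard isometric cut-embedding of $T$ into a Hamming cube (contribution $d_T(x,y)$); and an \emph{excursion family} $(f_j)_{j \in J}$ of coordinates each of the form $[v \in A] \cdot [\text{$x$ lies on a specific side of $e$ relative to $v$}]$, indexed by pairs $(e,v)$ chosen so that the excursion coordinate vanishes when $v = x$. This last property forces a vertical move $(A,x) \to (A \symdif \{x\}, x)$ to flip only the lamp coordinate at $x$, while a horizontal move across $e_0 = xx'$ flips only the side coordinate for $e_0$ together with at most two excursion coordinates attached to $e_0$. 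Each edge of $\la(T)$ therefore causes at most three coordinate flips, and the triangle inequality gives $d_\ham(f(\xi),f(\eta)) \leq 3 \, d_{\la(T)}(\xi,\eta)$.

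The main obstacle is the lower bound $d_\ham \geq \tfrac12 d_{\la(T)}$. Writing $\Phi$ for the number of case-$2$ edges of $T$, the Steiner-tree formula above reduces this to
\begin{equation*}
  \tfrac12 \,|A \symdif B| \,+\, \tfrac12 \,d_T(x,y) \,+\, \sum_{j \in J} \bigabs{f_j(A,x) - f_j(B,y)} \;\geq\; \Phi,
\end{equation*}
which I will establish by an edge-by-edge count on $T$: each case-$1$ edge is already charged by the side coordinate, and each case-$2$ edge must be charged at least one unit by the excursion family. The delicate situation, which drives the design of $(f_j)_{j\in J}$, is when both $A$ and $B$ have lit lamps on the same side opposite to $x,y$ across a case-$2$ edge, so that a plain non-emptiness indicator of ``there is a lit lamp on the far side'' is blind; here the fine-grained $(e,v)$-indexing of the excursion coordinates must be exploited to witness an element of $A \symdif B$ on the offending side. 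Finally, since $|I| \leq |V(T)| + |E(T)| \cdot (1 + |V(T)|)$, the index set is countable (respectively finite) whenever $T$ is, giving the last assertion of the theorem.
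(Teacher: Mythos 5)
Your overall architecture --- the Steiner-tree formula $d_{\la(T)}\bigl((A,x),(B,y)\bigr)=\abs{A\symdif B}+2\abs{E(\tau)}-d_T(x,y)$, which is equivalent to the paper's formula $2\bigabs{[x,A\symdif B]\setminus[x,y]}+\bigabs{[x,y]}+\abs{A\symdif B}$, together with three coordinate families matching the three summands --- is the same as the paper's. The gap is in the design of the excursion family, and it is not a technicality: it is exactly where the theorem is hard. If the coordinate indexed by $(e,v)$ is $[v\in A]\cdot[\text{$x$ lies on the side of $e$ opposite to $v$}]$, then a single horizontal move from $x$ to a neighbour $x'$ across the edge $e_0=xx'$ flips the coordinate $(e_0,v)$ for \emph{every} $v\in A$, because for each such $v$ exactly one of $x,x'$ lies on the side of $e_0$ opposite to $v$. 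Hence $d_{\ham}\bigl(f(A,x),f(A,x')\bigr)\geq\abs{A}$, which is unbounded, and your claim that a horizontal move flips ``at most two excursion coordinates attached to $e_0$'' is false. Your lower-bound count is fine for this family (each case-$2$ edge $e$ is witnessed by the coordinate $(e,c)$ for some $c\in A\symdif B$ beyond $e$), but the Lipschitz upper bound collapses, and no restriction of the form ``the coordinate vanishes when $v=x$'' repairs it.

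The paper resolves precisely this tension by indexing the excursion coordinates not by pairs $(e,v)$ but by pairs $(e,C)$ with $C$ a finite nonempty subset of $V(T)$ satisfying $e\notin[C]$, and declaring the $(e,C)$-coordinate of $f(A,x)$ to be $1$ exactly when $C=A_{x,e}:=\{a\in A:\,e\in[x,a]\}$. A horizontal move across $e_0$ changes $A_{x,e}$ only for $e=e_0$, so at most two coordinates flip (those indexed by $(e_0,A_{x,e_0})$ and $(e_0,A_{x',e_0})$); a vertical move at $x$ changes no $A_{x,e}$ at all since $e\notin[x,x]=\emptyset$. Meanwhile the detection property survives: if $e\in[x,A\symdif B]\setminus[x,y]$, then some $c\in A\symdif B$ lies in $A_{x,e}\symdif B_{y,e}$, so $A_{x,e}\neq B_{y,e}$ and at least one of the indices $(e,A_{x,e})$, $(e,B_{y,e})$ is legitimate and separates $f(A,x)$ from $f(B,y)$. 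Replacing your $(e,v)$-indexed family by this set-indexed one (still a countable, respectively finite, index set when $T$ is) makes the rest of your outline go through with the constants $\tfrac12$ and $3$.
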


It follows from Theorem~\ref{mainthm:A} that the lamplighter graph over a countable
tree bi-Lipschitzly embeds into $\ell_1$. In particular, this applies to
the lamplighter group of a finitely generated free group; as we
mentioned in the Introduction, this result also follows from more
general results by Cornulier, Stadler and Valette ~\cite{csv:12}. Unlike ~\cite{csv:12} that
relies on geometric group-theoretic arguments, our approach is based on
elementary metric techniques.

Our second main result is a technical structural result
(Theorem~\ref{thm:coalescence-lemma}) that relates the geometry of the
lamplighter graph over the vertex-coalescence of two graphs with the
geometry of the coalesced components. By combining this
structural result together with several embedding results that are
discussed in Section~\ref{sec:5}, we extend the metric characterizations of  superreflexivity in terms of lamplighter groups of~\cite{RO2018} to characterizations in terms of lamplighter graphs over graphs that
are built by coalescing several copies of elementary graphs such as
cycles or paths. In order to state our next result, we recall some
basic definitions from metric geometry. Let $(M,d_M)$ and $(N,d_N)$ be
two metric spaces. A map $f\colon M\to N$ is called a
\emph{bi-Lipschitz embedding }if there exist $s>0$ and $D\geq 1$ such
that for all $u,v\in M$, we have
\begin{equation}
  \label{eqn:1.1_1}
  s\cdot d_M(u,v)\leq d_N\big(f(u),f(v)\big)\leq D\cdot s\cdot d_M(u,v)\ .
\end{equation}
The \emph{distortion }$\dist(f)$ of a bi-Lipschitz embedding $f$ is
given by
\[
\dist(f)=\sup_{u\neq v}\frac{d_N\big(f(u),f(v)\big)}{d_M(u,v)}\cdot
\sup_{u\neq v}\frac{d_M(u,v)}{d_N\big(f(u),f(v)\big)}\ .
\]
As usual,
\[
c_{N}(M)=\inf\big\{\dist(f) :\,f\colon M\to N \text{ is a bi-Lipschitz
  embedding}\big\}
\]
denotes the \emph{$N$-distortion of $M$}. If there is no bi-Lipschitz
embedding from $M$ into $N$, then we set $c_{N}(M)=\infty$. A sequence
$(M_k)_{k\in\bn}$ of metric spaces is said to
\emph{equi-bi-Lipschitzly embed }into a metric space $N$ if
$\sup_{k\in\bn}c_N(M_k)<\infty$.

Denote by $\st_{n,k}$ the \emph{star graph }with $n$ branches of
length $k$, and by $\rose_{n,k}$ the \emph{rose graph }whose $n$
leaves are $k$-cycles (see Figure~\ref{fig:star-rose}; definitions
will be given in Section~\ref{sec:amalgamation}).

\begin{figure}[h]
  \caption{The star graph $\st_{8,4}$ and the rose graph $\rose_{4,11}$}
  \label{fig:star-rose}
  \vskip .3cm
  \includegraphics[scale=0.4]{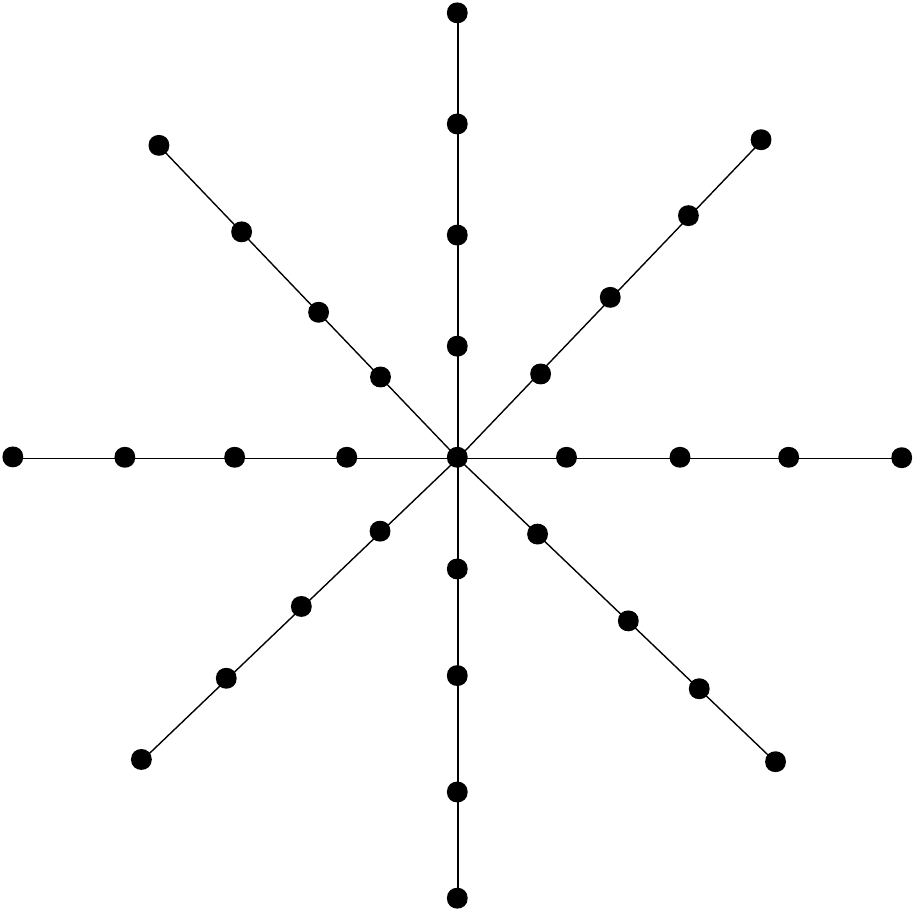}
  \hskip 1cm
  \includegraphics[scale=0.35]{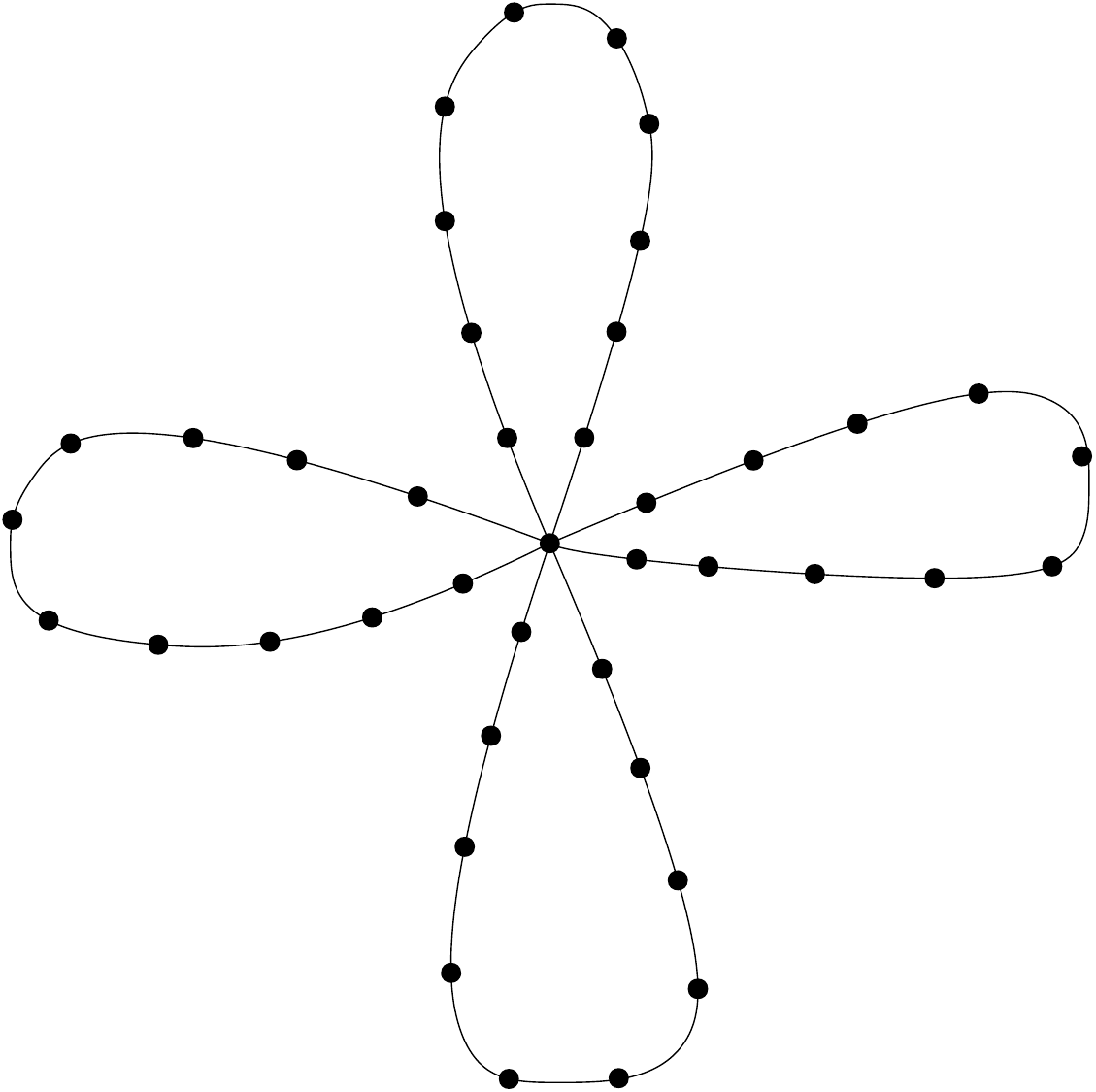}
\end{figure}

While these graphs can be easily embedded into every
finite-dimensional Banach space of a sufficiently large dimension, it
is far from being the case for lamplighter graphs over them.

\begin{mainthm}
  \label{mainthm:B}
  Let $Y$ be a Banach space and $n\in\bn$. The following assertions
  are equivalent.
  \begin{mylist}{(iii)}
  \item
    $Y$ is superreflexive;
  \item
    $\ds\sup_{k\in\bn}c_Y\big(\la(\st_{n,k})\big)=\infty$;
  \item
    $\ds\sup_{k\in\bn}c_Y\big(\la(\rose_{n,k})\big)=\infty$.
  \end{mylist}
\end{mainthm}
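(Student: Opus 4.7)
The plan is to reduce Theorem~\ref{mainthm:B} to the characterization of superreflexivity from \cite{RO2018}---namely, $Y$ is superreflexive if and only if $\sup_k c_Y(\bz_2 \wr \bz_k) = \infty$, where $\bz_2 \wr \bz_k$ may be identified with $\la(\cyc_k)$---using the structural coalescence result (Theorem~\ref{thm:coalescence-lemma}) and the embedding machinery developed in Section~\ref{sec:5} as the main bridge.

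For the implications (i) $\Rightarrow$ (iii) and (i) $\Rightarrow$ (ii), I would use isometric convex-subgraph inclusions. Each of the $n$ cycles making up $\rose_{n,k}$ is a convex subgraph isometric to $\cyc_k$, so $\la(\cyc_k)$ sits isometrically inside $\la(\rose_{n,k})$; combined with \cite{RO2018}, this immediately gives (i) $\Rightarrow$ (iii). Similarly, any single branch of $\st_{n,k}$ is a convex copy of $\pa_k$, so $\la(\pa_k) \hookrightarrow \la(\st_{n,k})$ isometrically. To derive (i) $\Rightarrow$ (ii) from this, one additionally needs a bounded-distortion embedding $\la(\cyc_k) \hookrightarrow \la(\pa_{O(k)})$ transferring the lower bound from the cycle case to the path case; this is extracted from the embedding results of Section~\ref{sec:5}.

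For the converse implications (ii) $\Rightarrow$ (i) and (iii) $\Rightarrow$ (i), I argue by contraposition: suppose $Y$ is not superreflexive. Then by \cite{RO2018} the family $\la(\cyc_k)$ equi-bi-Lipschitzly embeds into $Y$, and via the convex inclusion $\pa_k \subseteq \cyc_{2k+1}$ so does the family $\la(\pa_k)$. Since $\st_{n,k}$ (respectively $\rose_{n,k}$) is the iterated vertex-coalescence at a common vertex of $n$ copies of $\pa_k$ (respectively $\cyc_k$), Theorem~\ref{thm:coalescence-lemma} combined with the embedding results of Section~\ref{sec:5} assembles these component embeddings into an equi-bi-Lipschitz embedding of $\la(\st_{n,k})$ (respectively $\la(\rose_{n,k})$) into $Y$, with distortion controlled by $n$ and the single-component distortion alone. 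This contradicts (ii) or (iii), completing the argument by contraposition.

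The main obstacle is the uniformity in $k$ of the constants produced by the coalescence machinery: the abstract assembly of $n$ equi-bi-Lipschitz embeddings of $\la(\pa_k)$ or $\la(\cyc_k)$ into $Y$ must yield an embedding of the coalesced lamplighter whose distortion depends only on $n$ and on the shared component distortion, independently of $k$. Establishing this uniformity quantitatively, together with producing the bounded-distortion embedding $\la(\cyc_k) \hookrightarrow \la(\pa_{O(k)})$ needed for (i) $\Rightarrow$ (ii), is the technical crux of the argument.
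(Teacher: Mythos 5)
Your treatment of the implications (ii)$\implies$(i) and (iii)$\implies$(i) follows the paper's architecture (these are Propositions~\ref{prop:star-characterization} and~\ref{prop:rose-characterization}): decompose via Theorem~\ref{thm:coalescence-lemma} and assemble. Be aware, though, that the product produced by Theorem~\ref{thm:coalescence-lemma} contains not only the lamplighter factors $\la(\pa_k)$, $\la(\cyc_k)$ but also the (non-lamplighter) clover factors $\clo\big(G_1,2^{\abs{G_2}}\big)$ and $\clo\big(G_2,2^{\abs{G_1}}\big)$, i.e.\ star graphs $\st_{r,k}$ and rose graphs $\rose_{r,k}$ with $r$ exponential in $k$; these must be embedded separately (Lemma~\ref{lem:star-lemma}, Lemma~\ref{lem:clover-cycles} plus Dvoretzky). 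Moreover the assembly is done by Lemma~\ref{lem:product-lemma}, which requires each factor to embed into \emph{every finite-codimensional subspace} of $Y$; quoting~\cite{RO2018} merely as ``$\la(\cyc_k)$ equi-embeds into $Y$'' is not enough --- one uses that $\la(\cyc_k)$ embeds into a product of $8$ trees and then Bourgain's theorem, which does give finite-codimensional embeddings. Your (i)$\implies$(iii) via the isometric inclusion $\la(\cyc_k)\hookrightarrow\la(\rose_{n,k})$ (valid by~\eqref{eq:15}) together with~\cite{RO2018} is a legitimate alternative to the paper's route.

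The genuine gap is in your (i)$\implies$(ii). You propose to transfer the lower bound from cycles to paths via a bounded-distortion embedding $\la(\cyc_k)\hookrightarrow\la(\pa_{O(k)})$, ``extracted from the embedding results of Section~\ref{sec:5}.'' No such result appears in Section~\ref{sec:5} or anywhere else in the paper, and it is far from obvious: $\la(\pa_m)$ embeds into a product of \emph{two} binary trees with distortion $3$ (Proposition~\ref{prop:path-into-trees}), whereas $\la(\cyc_k)$ is only known to embed into a product of \emph{eight} trees, and the induced-map machinery of Lemma~\ref{lem:complex-induced-map} cannot help because $\cyc_k$ itself does not embed into $\pa_m$ with bounded distortion. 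Since you flag this step as the ``technical crux'' without a strategy for it, the implication is not established. The step is also unnecessary: the paper instead observes that $\pa_k$ sits isometrically in $\st_{n,k}$ (and in $\rose_{n,2k}$), so by Lemma~\ref{lem:complex-induced-map} the graph $\la(\pa_k)$ embeds isometrically into $\la(\st_{n,k})$, and $\bin_k$ embeds into $\la(\pa_k)$ with distortion at most $2$ by Lemma~\ref{lem:tree-into-lamplighter-of-path}; Bourgain's metric characterization of superreflexivity then forces $\sup_k c_Y\big(\la(\st_{n,k})\big)=\infty$ directly, with no detour through cycles.
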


If $(M_k)_{k\in\bn}$ and $(N_k)_{k\in\bn}$ are sequences of metric
spaces, we say that \emph{$(M_k)_{k\in\bn}$ equi-bi-Lipschitzly embeds
  into $(N_k)_{k\in\bn}$}, or \emph{$(N_k)_{k\in\bn}$
  equi-bi-Lipschitzly contains $(M_k)_{k\in\bn}$}, if
$\sup_k \inf_\ell c_{N_\ell} (M_k)<\infty$, or equivalently, if there is a
$C>0$ such that for all $k\in\bn$ there exists $\ell\in\bn$ such that
$M_k$  bi-Lipschitzly embeds into $N_\ell$ with distortion at
most~$C$. We say that $(M_k)_{k\in\bn}$ and $(N_k)_{k\in\bn}$ are
\emph{Lipschitz-comparable} if $(M_k)_{k\in\bn}$ equi-bi-Lipschitzly
embeds into $(N_k)_{k\in\bn}$ and $(N_k)_{k\in\bn}$
equi-bi-Lipschitzly embeds into $(M_k)_{k\in\bn}$. In
Section~\ref{sec:5} we prove that the lamplighter graph over $\com_k$,
the complete graph with $k$ vertices, contains a bi-Lipschitz copy of
the $k$-dimensional Hamming cube $\ham_k$ with distortion independent
of $k$.  Together with Theorem~\ref{mainthm:A}, it follows that the
geometry of lamplighter graphs over complete graphs or over binary
trees is essentially the same as the geometry of the Hamming cubes.

\begin{mainthm}
  \label{mainthm:C}
  The sequences $\big(\la(\com_k)\big)_{k\in\bn}$,
  $\big(\la(\bin_k)\big)_{k\in\bn}$
  and $\big(\ham_k\big)_{k\in\bn}$ are pairwise Lipschitz-comparable.
\end{mainthm}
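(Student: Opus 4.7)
The plan is to verify pairwise equi-bi-Lipschitz embeddability of the three sequences by establishing four embeddings, from which all six pairwise directions follow via transitivity of the equi-bi-Lipschitz relation. Two directions are already available from earlier results in the paper: applying Theorem~\ref{mainthm:A} with $T = \bin_k$ gives $\la(\bin_k) \hookrightarrow \ham_{I(k)}$ with distortion at most $6$, so $(\la(\bin_k))_k$ equi-embeds into $(\ham_k)_k$; and the embedding $\ham_k \hookrightarrow \la(\com_k)$ of distortion independent of $k$ from Section~\ref{sec:5} (announced in the paragraph preceding the theorem) gives $(\ham_k)_k \hookrightarrow (\la(\com_k))_k$.

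For $(\la(\com_k))_k \hookrightarrow (\ham_k)_k$, I would use the direct bit-encoding $\Phi\colon \la(\com_k) \to \ham_{2k}$ defined by $\Phi(A, x) = (\chi_A, e_x)$, where $\chi_A \in \{0,1\}^k$ is the indicator of $A$ and $e_x \in \{0,1\}^k$ is the standard basis vector corresponding to the position $x$. Then $d_\ham(\Phi(A, x), \Phi(B, y)) = |A \symdif B| + 2[x \neq y]$. A short analysis of minimum walks in $\com_k$ (distinguishing whether $x, y$ lie in $A \symdif B$) shows that, writing $s = |A \symdif B|$, the lamplighter distance $d_{\la(\com_k)}((A, x), (B, y))$ lies in the interval $[s + 1,\, 2s + 1]$ when $s \geq 1$, and equals $[x \neq y]$ when $s = 0$. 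Comparing these expressions gives that $\Phi$ is bi-Lipschitz with distortion bounded by a universal constant.

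The remaining and principal step is $(\ham_k)_k \hookrightarrow (\la(\bin_k))_k$: for each $k$ one must produce some $\ell = \ell(k)$ for which $\ham_k$ bi-Lipschitzly embeds into $\la(\bin_\ell)$ with distortion independent of $k$. A natural approach is to locate, inside $\bin_\ell$, a ``fat star'' --- a vertex $r$ together with $k$ companion vertices $c_1, \dots, c_k$ all within uniformly bounded distance of $r$ --- and consider the embedding
\[
  A \longmapsto \bigl(\{c_i : i \in A\},\, r\bigr).
\]
The lamplighter distance between the images of $A$ and $B$ is then dominated by $\sum_{i \in A \symdif B} \bigl(2\, d_{\bin_\ell}(r, c_i) + 1\bigr)$, which is $\asymp |A \symdif B|$ with constants coming from the uniform bound on $d_{\bin_\ell}(r, c_i)$. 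Locating such a substructure inside $\bin_\ell$ is the main technical obstacle and is the step that depends on the precise structural definition of $\bin_k$; it may be immediate (if $\bin_\ell$ already contains an isometric copy of $\st_k$ centred at a high-degree vertex) or may require invoking the coalescence machinery of Theorem~\ref{thm:coalescence-lemma} to assemble the required substructure from simpler pieces.

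Combining the four embeddings by transitivity produces all six pairwise embeddings among $(\la(\com_k))_k$, $(\la(\bin_k))_k$, and $(\ham_k)_k$, establishing the pairwise Lipschitz-comparability claimed in the theorem.
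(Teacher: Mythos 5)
Your architecture (four embeddings, closed up by transitivity) is sound, and three of the four legs are fine: $\la(\bin_k)\to\ham_I$ is Theorem~\ref{mainthm:A}, $\ham_k\to\la(\com_k)$ is Lemma~\ref{lem:hamming-into-lamplighter-of-complete}, and your direct bit-encoding $\la(\com_k)\to\ham_{2k}$ does give a universal distortion bound --- this is in fact a shortcut the paper does not take, since it routes $\la(\com_k)$ through $\la(\bin_n)$ and then Theorem~\ref{mainthm:A}. (Minor slip: when $A\symdif B=\{x\}$ and $x=y$ the lamplighter distance is $s$, not $\geq s+1$; this does not affect the conclusion.)

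The genuine gap is the leg you yourself flag as the principal step, $(\ham_k)_k\hookrightarrow(\la(\bin_\ell))_k$, and the proposed mechanism cannot work. A ``fat star'' --- a root $r$ together with $k$ companions $c_1,\dots,c_k$ all within a \emph{uniformly bounded} distance $D$ of $r$ --- does not exist in $\bin_\ell$ for large $k$: the binary tree has maximum degree $3$, so a ball of radius $D$ contains at most about $3\cdot 2^{D}$ vertices, forcing $D\gtrsim\log_2 k$. And if you allow $D\approx\log_2 k$, the map $A\mapsto\big(\{c_i:i\in A\},r\big)$ has Lipschitz constant of order $\log_2 k$ (a single lamp toggle costs about $2\,d_{\bin}(r,c_i)+1$) while its co-Lipschitz constant stays bounded (toggling all $k$ lamps costs only $O(k)$, since the TSP over the whole subtree is twice its number of edges), so the distortion is unbounded in $k$. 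Theorem~\ref{thm:coalescence-lemma} cannot rescue this: it never increases the degree of $\bin_\ell$.

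What is missing is the paper's device for promoting a map $f\colon G\to H$ that is bi-Lipschitz only \emph{after rescaling} (small distortion $b/a$ but large expansion $a$) to a map of lamplighter graphs: Lemma~\ref{lem:complex-induced-map} replaces each lamp $f(y)$ by a block $W_y$ of $m+1$ lamps strung along a path with $m\approx a/2$, so that the vertical (lamp-count) term of the lamplighter metric is inflated by a factor comparable to $a$, matching the inflation of the horizontal (travelling-salesman) term; the induced map $\fb_m$ then has distortion at most $3\,\dist(f)$. Applied to Lemma~\ref{lem:complete-into-binary} (an embedding $\com_k\to\bin_n$ with $n\approx\log_2 k$, distortion $1+\vare$, but expansion $\approx 2n$), this yields $\la(\com_k)\to\la(\bin_n)$ with uniformly bounded distortion, and composing with Lemma~\ref{lem:hamming-into-lamplighter-of-complete} supplies the leg $\ham_k\to\la(\bin_n)$ that your argument needs. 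Without this lamp-inflation trick (or an equivalent), the proof does not close.
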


Theorem \ref{mainthm:C} has an important consequence regarding characterizations of the notion of trivial Bourgain-Milman-Wolfson type \cite{bmw:86} (BMW-type in short). In 1986, Bourgain, Milman, and Wolfson showed that a metric space $Y$ has trivial BMW-type if and only if $\sup_{k\in\bn}c_Y\big(\ham_{k})<\infty$. This result is a nonlinear analogue of the Maurey-Pisier theorem for trivial type.
The Hamming cube $\ham_k$ is a $k$-regular graph and thus $\big(\ham_k\big)_{k\in\bn}$ is a sequence of graphs with unbounded degree. The notion of BMW-type comes from the local theory of Banach spaces and a natural question is whether trivial BMW-type can be characterized as above using a sequence of graphs $(G_k)_{k\in\bn}$ with uniformly bounded degree. For Banach spaces, Ostrovskii \cite{ostrovskii:11} answered this question positively and it is not difficult to see that the sequence of graphs with maximum degree 3 from \cite{ostrovskii:11} is actually Lipschitz-comparable to the sequence of Hamming cubes and thus also settles the question for arbitrary metric spaces. Since every graph in the sequence $\big(\la(\bin_k)\big)_{k\in\bn}$ has maximum degree $4$, Theorem \ref{mainthm:C} also provides a sought after sequence $(G_k)_{k\in\bn}$. 
\begin{corD}
  \label{cor:D}
  Let $(Y,d_Y)$ be a metric space. Then,
\[Y \textrm{ has trivial BMW-type if and only if }\ds\sup_{k\in\bn}c_Y\big(\la(\bin_{k})\big)<\infty.\]
\end{corD}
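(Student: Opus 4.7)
The plan is to derive Corollary D as an immediate consequence of Theorem~\ref{mainthm:C} together with the metric characterization of trivial Bourgain--Milman--Wolfson type. Recall from~\cite{bmw:86} that a metric space $Y$ has trivial BMW-type if and only if the sequence of Hamming cubes $(\ham_k)_{k\in\bn}$ equi-bi-Lipschitzly embeds into $Y$, that is,
\[
\sup_{k\in\bn}c_Y(\ham_k)<\infty.
\]
As noted in the paragraph preceding the statement, the extension of this equivalence from Banach spaces to arbitrary metric spaces $Y$ is implicit in the work of Ostrovskii~\cite{ostrovskii:11}: the bounded-degree graph sequence constructed there is Lipschitz-comparable to $(\ham_k)_{k\in\bn}$, and the underlying argument uses only the metric structure of $Y$.

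Having this in hand, I would invoke Theorem~\ref{mainthm:C}, which asserts that $\big(\la(\bin_k)\big)_{k\in\bn}$ and $\big(\ham_k\big)_{k\in\bn}$ are Lipschitz-comparable. Unwinding the definition of Lipschitz-comparability given just before Theorem~\ref{mainthm:B}, this produces a constant $C>0$ with the property that for every $k\in\bn$ there exist $\ell(k),m(k)\in\bn$ and bi-Lipschitz maps
\[
\ham_k \longrightarrow \la(\bin_{\ell(k)}), \qquad \la(\bin_k) \longrightarrow \ham_{m(k)},
\]
each of distortion at most $C$. Composing any such map with a near-optimal bi-Lipschitz embedding of its target into $Y$ yields
\[
c_Y(\ham_k) \leq C\cdot c_Y\big(\la(\bin_{\ell(k)})\big) \quad\text{and}\quad c_Y\big(\la(\bin_k)\big) \leq C\cdot c_Y(\ham_{m(k)}),
\]
from which the two-sided equivalence
\[
\sup_{k\in\bn}c_Y(\ham_k)<\infty \iff \sup_{k\in\bn}c_Y\big(\la(\bin_k)\big)<\infty
\]
follows at once.

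Chaining this with the metric BMW characterization recalled above gives the statement of Corollary D. There is no real obstacle here: all the substance is packaged inside Theorem~\ref{mainthm:C} (proved in Section~\ref{sec:5}) together with the now-classical metric version of the BMW characterization, and the present argument is purely a bookkeeping step whose only role is to transport the property through the Lipschitz-comparability. The only point worth flagging is the need for the metric (as opposed to Banach) version of the BMW theorem, which is precisely why the paper explicitly records that Ostrovskii's sequence is Lipschitz-comparable to the Hamming cubes, so that the same equivalence applies to every metric target $Y$.
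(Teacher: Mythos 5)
Your argument is correct and is exactly the route the paper intends: Corollary D is presented as an immediate consequence of Theorem~\ref{mainthm:C} combined with the Bourgain--Milman--Wolfson characterization, and your composition-of-distortions bookkeeping (transferring $\sup_k c_Y(\cdot)<\infty$ across Lipschitz-comparability in both directions) is the whole content. One minor attribution point: the paper records the Hamming-cube characterization of trivial BMW-type as holding for arbitrary metric spaces already in \cite{bmw:86}, so you need not route the metric version of that theorem through Ostrovskii's work, whose role in the discussion is only the bounded-degree question.
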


Note that the similar question for the nonlinear analogue of the Maurey-Pisier theorem for trivial cotype, obtained by Mendel and Naor \cite{mendel-naor:08}, has a simple solution for arbitrary metric spaces (see \cite{ostrovskii:11} for a proof and a discussion of these questions). 

\section*{Acknowledgment}
We thank the referees for their careful reading and the many suggestions that helped improve the exposition and clarity of the paper.
\section{Preliminaries on lamplighter graphs}
\label{sec:preliminaries}

We shall use standard graph theory terminology as can be found
in~\cite{bollobas:98}. In particular, a graph $G$ is a pair $(V,E)$
where $V=V(G)$ is an arbitrary set (the set of vertices) and $E=E(G)$
is the set of edges, \ie a set consisting of some unordered pairs of
distinct vertices. (So edges are not directed and there are no
multiple edges or loops.) We shall often write $x\in G$ instead of
$x\in V$ for a vertex $x$. The edge connecting distinct vertices $x$ 
and $y$ is simply denoted by $xy$ (which is the same as $yx$). A
\emph{walk in }$G$ is a finite sequence
$w=(x_0,x_1,\dots,x_n)$ of vertices of $G$ with $n\geq 0$ such that
$x_{i-1}x_i$ is an edge of $G$ for all $1\leq i\leq n$. We call $w$ a
\emph{walk from $x=x_0$ to $y=x_n$ }and call $n$ the \emph{length of
  $w$}. If $w$ has no repetition of vertices other than the first and
last vertices, \ie if $x_i\neq x_j$ whenever $1<j-i<n$, then $w$ is
called a \emph{path (from $x$ to $y$)}. If $w$ is a walk and $x_r=x_s$
for some $r,s$ with $1<s-r<n$, then
$(x_0,\dots,x_{r-1},x_s,x_{s+1},\dots,x_n)$ is a strictly shorter walk
from $x$ to $y$. It follows that if $w'$ is a subsequence of $w$ of
minimal length such that $w'$ is a walk from $x$ to $y$, then $w'$ is
in fact a path. We say that the graph $G$ is \emph{connected }if any
two vertices are connected in $G$ by a walk (or, equivalently, by a
path). 

A connected graph $G$ becomes a metric space in a natural way. For
vertices $x$ and $y$ of $G$, we denote by $d_G(x,y)$ (or sometimes
simply by $d(x,y)$) the length of a shortest path in $G$ (called a
\emph{geodesic}) from $x$ to $y$. It is easy to verify that $d_G$ is a
metric. An important example for us are \emph{Hamming cubes}. For an
arbitrary set $I$, the Hamming cube $\ham_I$ has vertex set
$\{0,1\}^{(I)}$ consisting of all functions $\vare\colon I\to\{0,1\}$
with finite support, \ie the set $\{i\in I:\,\vare_i=1\}$ is a finite
subset of $I$. Two vertices $\vare$ and $\delta$ are joined by an edge
if and only if they differ in exactly one coordinate, \ie there is a
unique $i\in I$ with $\vare_i\neq\delta_i$. The graph distance on
$\ham_I$, denoted $d_{\ham}$ and referred to as the \emph{Hamming
  metric}, is the $\ell_1$-metric given by
\[
d_{\ham}(\vare,\delta)=\sum_{i\in I} \abs{\vare_i-\delta_i}\ .
\]
We shall often identify $\{0,1\}^{(I)}$ with the set of all finite
subsets of $I$. Under this identification, the Hamming metric becomes
the symmetric difference metric given by
$d_{\ham}(A,B)=\abs{A\symdif B}$ for finite subsets $A$ and $B$ of $I$.

\subsection{A closed formula for the lamplighter graph metric}
Let us recall the definition of the lamplighter graph $\la(G)$ of a
graph $G$. The vertex set of $\la(G)$ consists of all pairs $(A,x)$
with $x\in G$ and $A$ a finite subset of $G$. Two vertices $(A,x)$ and
$(B,y)$  are joined by an edge in $\la(G)$ if and only if \emph{either
}$A=B$ and $xy$ is an edge in $G$ \emph{or }$x=y$ and
$A\symdif B=\{x\}$, and these correspond, respectively, to horizontal
and vertical moves by the lamplighter.

It is clear that if $\la(G)$ is connected, then so is $G$. Indeed, the
horizontal moves in a path in $\la(G)$ from $(\emptyset,x)$ to
$(\emptyset,y)$ correspond to a path from $x$ to $y$ in $G$. The
converse also holds and its proof yields a formula for the graph
metric of $\la(G)$ given in
Proposition~\ref{prop:graph-metric-in-lamplighter} below. Computing 
the distance in $\la(G)$ boils down to the problem of finding a
shortest walk in $G$ from a vertex $x$ to another vertex $y$ that
visits all vertices in a given subset $C$ of $G$. This is a well known
and famous problem, the \emph{travelling salesman problem }for $G$. We
shall denote by $\tsp_G(x,C,y)$ the length of a solution to this
problem, \ie the least $n\geq 0$ for which there is a walk
$(x_0,x_1,\dots,x_n)$ from $x=x_0$ to $y=x_n$ such that
$C\subset\{x_0,x_1,\dots,x_n\}$.

\begin{prop}
  \label{prop:graph-metric-in-lamplighter}
  Let $G$ be a  connected graph. Then the lamplighter graph
  $\la(G)$ is also connected with graph metric given by
  \begin{equation}
    \label{eq:graph-metric-in-lamplighter}
    d_{\la(G)}\big((A,x),(B,y)\big)= \tsp_G(x,A\symdif B,y) +
    \abs{A\symdif B}\ .
  \end{equation}
\end{prop}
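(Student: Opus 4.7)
\smallskip

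The plan is to prove both inequalities separately, with connectedness of $\la(G)$ being a byproduct of the upper bound.

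For the upper bound, I would construct an explicit walk in $\la(G)$ from $(A,x)$ to $(B,y)$ of length $\tsp_G(x,A\symdif B,y)+\abs{A\symdif B}$. Let $n=\tsp_G(x,A\symdif B,y)$ and pick a walk $(x_0,x_1,\dots,x_n)$ in $G$ with $x_0=x$, $x_n=y$, and $A\symdif B\subseteq\{x_0,\dots,x_n\}$. Traversing this walk in $\la(G)$ horizontally, and inserting a single vertical (lamp-switch) move the first time each vertex $v\in A\symdif B$ is visited, produces a walk of the required length whose endpoint is $(B,y)$. Since any $(A,x)$ is connected by horizontal moves along a path in $G$ to any $(A,y)$, this in particular shows $\la(G)$ is connected.

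For the lower bound, I would take an arbitrary walk $W$ in $\la(G)$ from $(A,x)$ to $(B,y)$ and count separately its horizontal and vertical steps. The sequence of horizontal steps projects onto a walk $(x_0,x_1,\dots,x_k)$ in $G$ from $x$ to $y$ of length $k$ equal to the number of horizontal moves in $W$. A vertical move in $W$ performed while the lamplighter stands at vertex $v$ toggles the lamp at $v$; hence for each $v\in A\symdif B$ the number of vertical moves occurring at $v$ is odd, so at least one. This forces $v\in\{x_0,\dots,x_k\}$, so $k\geq\tsp_G(x,A\symdif B,y)$, while the total number of vertical moves is at least $\abs{A\symdif B}$. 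Adding the two contributions yields the lower bound.

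I do not anticipate any genuine obstacle; the argument is essentially a bookkeeping exercise separating horizontal and vertical moves. The only subtle point is noticing that a vertex $v\in A\symdif B$ must both be visited by the horizontal projection \emph{and} contribute at least one vertical move, but these two contributions are genuinely independent since the vertical move at $v$ is not counted in the length of the walk in $G$. Once this is clear, combining the two inequalities with the explicit construction of the upper-bound walk gives both connectedness and the formula \eqref{eq:graph-metric-in-lamplighter}.
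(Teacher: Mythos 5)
Your proposal is correct and follows essentially the same route as the paper: the lower bound by observing that each $v\in A\symdif B$ forces at least one vertical move and must be visited by the horizontal projection of the walk, and the upper bound by explicitly traversing an optimal travelling-salesman walk while toggling each lamp of $A\symdif B$ in passing. Your parity argument for the vertical moves is just a slightly more formal version of the paper's assertion that the lamplighter "clearly needs at least $\abs{A\symdif B}$ vertical moves."
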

\begin{proof}
  Let us fix vertices $(A,x)$ and $(B,y)$. The lamplighter clearly
  needs at least $\abs{A\symdif B}$ vertical moves in getting from
  $(A,x)$ to $(B,y)$ in order to switch all lamps in $A\setminus B$
  off and to lit all lamps in $B\setminus A$. As the lamplighter can
  only alter the state of the lamp at the vertex he is currently at,
  his horizontal moves must visit all vertices in $A\symdif B$ while
  travelling from $x$ to $y$. Thus, the right-hand side in the
  expression above is a lower bound for the distance. It is easy to see
  that this lower bound is attained. Indeed, let
  $n=\tsp_G(x,A\symdif B,y)$ and let $w=(x_0,x_1,\dots,x_n)$ be a walk
  in $G$ from $x=x_0$ to $y=x_n$ such that
  $A\symdif B\subset\{x_0,x_1,\dots,x_n\}$. Let $m=\abs{A\symdif B}$
  and let $0\leq i_1<i_2<\dots <i_m\leq n$ be such that
  $A\symdif B=\{x_{i_1},\dots,x_{i_m}\}$. Set $i_{m+1}=n$. Now
  consider the following path of length $m+n$ in $\la(G)$ from
  $(A,x)$ to $(B,y)$. Start with horizontal moves $(A,x_i)$,
  $0\leq i\leq i_1$, from $(A,x)$ to $(A,x_{i_1})$. Having reached the
  vertex $\big(A\symdif \{x_{i_1},\dots,x_{i_{j-1}}\},x_{i_j}\big)$
  for some $1\leq j\leq m$, make the vertical move
  $\big(A\symdif\{x_{i_1},\dots,x_{i_j}\},x_{i_j}\big)$ followed by
  horizontal moves $\big(A\symdif\{x_{i_1},\dots,x_{i_j}\},x_i\big)$
  for $i_j<i\leq i_{j+1}$. These moves end at the vertex
  $\big(A\symdif\{x_{i_1},\dots,x_{i_j}\},x_{i_{j+1}}\big)$ which
  becomes $(B,y)$ when $j=m$.
\end{proof}

In general, the Travelling Salesman Problem is NP-hard. However, for
some graphs it is possible to find explicit algorithms. We present one
such algorithm for trees in Section~\ref{sec:3.1}. This is essentially
a pre-order traversal algorithm that also accounts for backtracking.

\subsection{A first example: the lamplighter graph over a path}
It was shown, amongst other things, in~\cite{stein-taback:12} that the
group $\bz_q\wr \bz$ (with a specific generating set) embeds
bi-Lipschitzly with distortion at most~$4$ in the Cartesian product of
two infinite $(q+1)$-regular trees. Their proof is based on a lengthy
and tricky computation of the word-length of certain group
elements. We provide a simpler proof of the finite analogue for the
case $q=2$ in the purely graph-theoretic
context. We replace $\bz$ with $\pa_k$, the path of length $k$, which
has vertices $v_0, v_1,\dots,v_k$ and edges $v_{i-1}v_i$,
$1\leq i\leq k$, and replace the lamplighter group $\bz_2\wr\bz$ with
the lamplighter graph $\la(\pa_k)$. After the proof we explain how
our argument extends to the infinite case, and hence shows the result
for $\bz_2\wr\bz$ just mentioned.

We first describe \emph{the binary tree $\bin_k$ of height $k$}, and
introduce some notation. The vertex set of $\bin_k$ is
$\bigcup_{i=0}^k \{0,1\}^i$. Let $\delta=(\delta_1,\dots,\delta_m)$ and
$\vare=(\vare_1,\dots,\vare_n)$ be vertices of $\bin_k$. We write
$\delta\prec \vare$ if $m<n$ and $\delta_i=\vare_i$ for
$1\leq i\leq m$. Then $\delta\vare$ is an edge of $\bin_k$ if and only
if $\abs{m-n}=1$ and either $\delta\prec \vare$ or
$\vare\prec \delta$. We will write $\delta\preccurlyeq \vare$ if
$\delta=\vare$ or $\delta\prec \vare$. We define the length of
$\delta$ to be $\abs{\delta}=m$. If $m\geq 1$, we let
$\delta'=(\delta_1,\dots,\delta_{m-1})$. Note that if
$\abs{\delta}\leq \abs{\vare}$, then $\delta\vare$ is an edge of
$\bin_k$ if and only if $\delta=\vare'$. The unique vertex of length
zero will be denoted by $\emptyset$ and the graph-metric by $d_{\bin}$
regardless of the value of~$k$.

Given graphs $G$ and $H$, the
\emph{Cartesian product graph }$G\cprod H$ of $G$ and $H$ has vertex
set $V(G)\times V(H)$, and vertices $(x,y)$ and $(v,z)$ are joined by
an edge if and only if \emph{either }$x=v$ and $yz$ is an edge in $H$
\emph{or }$y=z$ and $xv$ is an edge in $G$. Observe that the graph
metric on the Cartesian product is given by
\[
d_{\square}\big((x,y),(v,z)\big)=d_{G}(x,v)+d_{H}(y,z)\ .
\]
Note that the Hamming cube $\ham_n=\ham_{\{1,\dots,n\}}$ is the
$n$-fold Cartesian product graph $\pa_1\cprod \dots \cprod \pa_1$.

\begin{prop}
  \label{prop:path-into-trees}
  Let $k\in\bn$. There exists a map
  $f\colon \la(\pa_k)\to \bin_{k+1}\cprod \bin_{k+1}$ such that for
  all $x,y\in\la(\pa_k)$ we have
  \begin{equation*}
    \tfrac23\cdot d_{\la(\pa_k)}(x,y)\leq
    d_{\square}\big(f(x),f(y)\big) \leq 2\cdot d_{\la(\pa_k)}(x,y)\ .
  \end{equation*}
\end{prop}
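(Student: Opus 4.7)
The plan is to define a map $f\colon \la(\pa_k)\to \bin_{k+1}\cprod \bin_{k+1}$ that splits each lamplighter configuration $(A,v_i)$ into a ``left'' and a ``right'' binary string, each of which is interpreted as the root-to-vertex path of a vertex in a binary tree. Writing $\chi_A\colon V(\pa_k)\to\{0,1\}$ for the indicator function of $A$, I would set
\[
L(A,i) = \bigl(\chi_A(v_0),\chi_A(v_1),\dots,\chi_A(v_{i-1})\bigr) \quad\text{and}\quad R(A,i) = \bigl(\chi_A(v_k),\chi_A(v_{k-1}),\dots,\chi_A(v_i)\bigr),
\]
viewed as vertices of $\bin_{k+1}$ at depths $i$ and $k-i+1$ respectively, and then put $f(A,v_i)=(L(A,i),R(A,i))$. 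The asymmetry---placing $\chi_A(v_i)$ at the end of $R$ but not in $L$---is deliberate: if this bit appeared in both coordinates then a vertical edge $(A,v_i)\sim(A\symdif\{v_i\},v_i)$ would map to image-distance~$4$ in the product, violating the desired upper bound, while if it appeared in neither, $f$ would fail to be injective.

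For the upper bound, I would check that $f$ maps each edge of $\la(\pa_k)$ to a pair at $d_\square$-distance at most~$2$; the triangle inequality along geodesics then yields $d_\square(f(x),f(y))\leq 2\,d_{\la(\pa_k)}(x,y)$. A horizontal edge $(A,v_i)\sim(A,v_{i+1})$ appends the bit $\chi_A(v_i)$ to $L(A,i)$ and removes the same bit from the end of $R(A,i)$, contributing $1$ to each tree-coordinate. A vertical edge $(A,v_i)\sim(A\symdif\{v_i\},v_i)$ leaves $L(A,i)$ unchanged and flips only the final coordinate of $R(A,i)$, so the two $R$-images are siblings in $\bin_{k+1}$ at tree-distance~$2$. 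In both cases $d_\square(f(x),f(y))=2$.

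For the lower bound, I would exploit two closed-form identities. On the path, the Travelling Salesman value is $\tsp_{\pa_k}(v_i,C,v_j)=2(b-a)-|i-j|$, where $a=\min(\{i,j\}\cup\{t:v_t\in C\})$ and $b=\max(\{i,j\}\cup\{t:v_t\in C\})$; combined with Proposition~\ref{prop:graph-metric-in-lamplighter}, this gives an explicit formula for $d_{\la(\pa_k)}\bigl((A,v_i),(B,v_j)\bigr)$ in terms of $\alpha=\min\{t:v_t\in A\symdif B\}$, $\omega=\max\{t:v_t\in A\symdif B\}$, and $|A\symdif B|$. On $\bin_{k+1}$, $d_{\bin}(u,w)=|u|+|w|-2p(u,w)$ where $p(u,w)$ is the common-prefix length, and the prefix lengths of the pairs $(L(A,i),L(B,j))$ and $(R(A,i),R(B,j))$ are easily read off from $\alpha$ and $\omega$ respectively. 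Assuming $i\leq j$ without loss of generality, a case analysis on whether $\alpha<i$ and whether $\omega\geq j$, combined with the elementary bound $|A\symdif B|\leq \omega-\alpha+1$, reduces the desired inequality $3\,d_\square(f(x),f(y))\geq 2\,d_{\la(\pa_k)}(x,y)$ to straightforward arithmetic in each case. The principal obstacle is the bookkeeping in this case analysis: the ratio $d_\square/d_{\la(\pa_k)}$ approaches $2/3$ only in the asymptotic regime where $|A\symdif B|$ is comparable to $\omega-\alpha$ and $i,j$ lie strictly between $\alpha$ and $\omega$, so the bound is genuinely tight.
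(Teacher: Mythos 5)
Your proposal is correct and is essentially the paper's own proof: the map is identical (left string recording lamps at $v_0,\dots,v_{i-1}$, right string recording lamps at $v_k,\dots,v_i$ in reverse), the upper bound is obtained by the same edge-by-edge check giving distance $2$ for both horizontal and vertical edges, and the lower bound uses the same explicit path-TSP and tree-distance formulas followed by the same four-case analysis on the positions of $\min$ and $\max$ of $A\symdif B$ relative to $i$ and $j$. The only cosmetic difference is that the paper treats $A\symdif B=\emptyset$ as a separate (trivial) case before introducing $\alpha$ and $\omega$.
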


\begin{proof}
  For $(A,v_m)\in \la(\pa_k)$ let
  $f(A,v_m)=\big((\vare^A_i)_{i=1}^m,(\vare^A_{k+1-i})_{i=0}^{k-m}\big)$
  where
  \[
  \vare^A_i=%
  \begin{cases}
    1 & \text{ if } v_{i-1}\in A\ ,\\
    0 & \text{ if } v_{i-1}\notin A\ .
  \end{cases}
  \]
  Let $(A,v_m), (B,v_n)\in \la(\pa_k)$ and assume without loss of
  generality that $m\leq n$. If $A\symdif B=\emptyset$, then $A=B$,
  $\vare^A=\vare^B$ and
  \begin{equation}
    \label{eq:4}
    \begin{aligned}
      d_{\square} \big(f(A,v_m),&f(B,v_n)\big)\\
      &=d_{\bin}\big((\vare^A_i)_{i=1}^m,(\vare^B_i)_{i=1}^n)\big) +
      d_{\bin}
      \big((\vare^A_{k+1-i})_{i=0}^{k-m},(\vare^B_{k+1-i})_{i=0}^{k-n}\big)\\
      &=n-m+(k-m)-(k-n)\\  
      &=2\cdot d_{\la(\pa_k)}\big((A,v_m),(B,v_n)\big)\ .
    \end{aligned}
  \end{equation}
  If $v_m=v_n$ and $A\symdif B=\{v_m\}$, then $\vare^A_i=\vare^B_i$ if
  and only if $i\neq m+1$, and hence
  \begin{equation}
    \label{eq:8}
    \begin{aligned}
      d_\square\big(f(A,v_m),&f(B,v_n)\big)\\
      &=d_{\bin}
      \big((\vare^A_i)_{i=1}^m,(\vare^{B}_i)_{i=1}^m\big)
      + d_{\bin}\big((\vare^A_{k+1-i})_{i=0}^{k-m},
      (\vare^{B}_{k+1-i})_{i=0}^{k-m}\big)\\ 
      &=0+2=2\ .
    \end{aligned}
  \end{equation}
  Since $d_{\la(\pa_k)}$ is a graph metric, it is sufficient to
  estimate the Lipschitz constant on adjacent vertices, and it follows
  from~\eqref{eq:4} and~\eqref{eq:8} that $f$ is $2$-Lipschitz.

  Assume now that $A\symdif B\neq \emptyset$. Set
  $\ell=\min\{i:\,v_{i-1}\in A\symdif B\}=\min\big\{i:\,\vare^A_i\neq\vare^B_i\big\}$
  and
  $r=\max\{i:\,v_{i-1}\in A\symdif B\}=\max\big\{i:\,\vare^A_i\neq\vare^B_i\big\}$.
  From the definition of $\ell$ and $r$ it follows that
  \begin{equation}
    \label{eq:left}
    \begin{aligned}
      d_{\bin} \big((\vare^A_i)_{i=1}^m,(\vare^B_i)_{i=1}^n\big) &= %
      \begin{cases}
        n-m & \text{if } \ell>m\ ,\\
        m-(\ell-1) +n-(\ell-1) & \text{if } \ell\leq m\ ,
      \end{cases}\\[2ex]
      &=%
      \begin{cases}
        n-m & \text{if } \ell>m\ ,\\
        m+n+2-2\ell & \text{if } \ell\leq m\ .
      \end{cases}
    \end{aligned}
  \end{equation}
  and
  \begin{equation}
    \label{eq:right}
    \begin{aligned}
      d_{\bin}\big((\vare^A_{k+1-i})_{i=0}^{k-m},&(\vare^B_{k+1-i})_{i=0}^{k-n}\big)\\
      &=%
      \begin{cases}
        (n+1)-(m+1) &\text{if } r\leq  n\ ,\\
        (r+1)-(m+1)+(r+1)-(n+1) &\text{if } r>n\ ,
      \end{cases}\\[2ex]
      &=%
      \begin{cases}
        n-m & \text{if } r\leq n\ ,\\
        2r-m-n & \text{if } r>n\ .
      \end{cases}
    \end{aligned}
  \end{equation}
  Obtaining a lower bound on $d_\square\big(f(A,v_m),f(B,v_n)\big)$
  using~\eqref{eq:left} and~\eqref{eq:right} naturally splits into
  four cases. In all cases we will use the estimate
  $\abs{A\symdif B}\leq r-\ell+1$.
  \begin{mylist}{}
  \item[\underline{Case 1:}]
    $\ell\leq m$ and $r\leq n$. In this case
    $\tsp_{\pa_k}(v_m,A\symdif B,v_n)=m+n+2-2\ell$ as the salesman
    moves from $v_m$ to $v_{\ell-1}$ and then to $v_n$. We then get
    \begin{multline*}
      d_{\la(\pa_k)}\big((A,v_m),(B,v_n)\big) \leq m+n+2-2\ell +
      r-\ell+1 = r+m+n-3(\ell-1)\\
      \leq 3(n-\ell+1) =\tfrac32 \cdot
      d_\square\big(f(A,v_m),f(B,v_n)\big)\ .
    \end{multline*} 
  \item[\underline{Case 2:}]
    $\ell\leq m$ and $r>n$. In this case
    $\tsp_{\pa_k}(v_m,A\symdif B,v_n)=m-n+2r-2\ell$ as the salesman
    moves from $v_m$ to $v_{\ell-1}$, then to $v_{r-1}$ and finally to
    $v_n$. Thus,
    \begin{multline*}
      d_{\la(\pa_k)}\big((A,v_m),(B,v_n)\big) \leq m-n+2r-2\ell +
      r-\ell+1 = m-n-2+3(r-\ell+1)\\
      \leq 3(r-\ell+1) =\tfrac32 \cdot
      d_\square\big(f(A,v_m),f(B,v_n)\big)\ .
    \end{multline*} 
  \item[\underline{Case 3:}]
    $\ell>m$ and $r\leq n$. Then $\tsp_{\pa_k}(v_m,A\symdif
    B,v_n)=n-m$ as the optimal walk for the salesman is from $v_m$ to
    $v_n$. Therefore,
    \begin{multline*}
      d_{\la(\pa_k)}\big((A,v_m),(B,v_n)\big) \leq n-m+
      r-\ell+1 \leq n+r-2m\\
      \leq 2(n-m)=
      d_\square\big(f(A,v_m),f(B,v_n)\big)\ .
    \end{multline*} 
  \item[\underline{Case 4:}]
    $\ell> m$ and $r>n$. In this range
    $\tsp_{\pa_k}(v_m,A\symdif B,v_n)=2r-2-m-n$ as the salesman moves
    from $v_m$ to $v_{r-1}$ and to $v_n$. Using~\eqref{eq:left}
    and~\eqref{eq:right} for the last time, we get
    \begin{multline*}
      d_{\la(\pa_k)}\big((A,v_m),(B,v_n)\big) \leq 2r-2-m-n +
      r-\ell+1 = 3r-2-m-n-(\ell-1)\\
      \leq 3r-3m =\tfrac32 \cdot
      d_\square\big(f(A,v_m),f(B,v_n)\big)\ .
    \end{multline*} 

  \end{mylist}
\end{proof}

\begin{rem}
  Let $\mathrm{Z}$ denote the double-infinite path. This graph has vertex
  set $\bz$ and edges between consecutive integers. Let $\mathrm{T}_3$
  be the $3$-regular
  (infinite) tree. A description of $\mathrm{T}_3$ is as follows. For
  $n\in\bz$ denote by $\bz_{\leq n}$ the initial segment
  $\{m\in\bz:\,m\leq n\}$ of $\bz$. Then $\mathrm{T}_3$ has vertex set
  \[
  \big\{ \vare\colon\bz_{\leq n}\to\{0,1\}:\,n\in\bz,\ \vare \text{
    has finite support} \big\}
  \]
  and vertices $(\delta_i)_{i=-\infty}^m$ and
  $(\vare_i)_{i=-\infty}^n$ with $m\leq n$ are joined by an edge if
  and only if $n=m+1$ and $\delta_i=\vare_i$ for all $i\leq m$. An
  almost identical argument as the one used in the proof above shows
  that the map $f\colon\la(\mathrm{Z})\to\mathrm{T}_3\cprod\mathrm{T}_3$
  defined by
  $f(A,n)=\big((\vare^A_i)_{i=-\infty}^n,(\vare^A_{-i})_{i=-\infty}^{-n-1}\big)$
  has distortion at most~3, where $\vare^A$ denotes the indicator
  function of the finite subset $A$ of $\bz$. It is clear that
  $\la(\mathrm{Z})$ is isometric to $\bz_2\wr\bz$ with respect to a
  suitable set of generators.
\end{rem}

\subsection{Lamplighter graphs vs lamplighter groups}
We conclude this section by making precise the connection between
lamplighter graphs and lamplighter groups. As previously mentioned,
the lamplighter group of a group $\Gamma$ is the (restricted) wreath
product $\bz_2\wr \Gamma$. This can be thought of as the set of all
pairs $(A,x)$ with $A$ a finite subset of $\Gamma$ and $x\in \Gamma$
with multiplication defined by
\[
(A,x)\cdot (B,y)=(A\symdif xB,xy)
\]
where $xB=\{ xb:\,b\in B\}$. Now assume that $\Gamma$ is generated by
$S\subset \Gamma$. We assume that the identity $e\notin S$ and that
$x^{-1}\in S$ whenever $x\in S$. The \emph{(right) Cayley graph
}$\cay(\Gamma,S)$ of $\Gamma$ with respect to $S$ has vertex set
$\Gamma$, and $x,y\in \Gamma$ are joined by an edge if and only if
$y^{-1}x\in S$. Since $S$ generates $\Gamma$, it follows that
$\cay(\Gamma,S)$ is connected. It is easy to verify that
\[
S'=\big\{ (\emptyset,s):\, s\in S\big\}\cup \big\{ \big(\{e\},e\big) \big\}
\]
generates $\bz_2\wr \Gamma$. Moreover, the Cayley graph
$\cay(\bz_2\wr \Gamma,S')$ is the lamplighter graph
$\la\big(\cay(\Gamma,S)\big)$.

\begin{rem}
  It is possible to define the wreath product of graphs which
  generalizes the notion of wreath product of groups. Let
  $G=(V_G, E_G)$ and  $H=(V_H,E_H)$ be two graphs, and let $v_0$ be a
  distinguished point in $V_H$. A function $f\colon V_G\to V_H$ is
  called finitely supported if $f(v)=v_0$ for all but finitely many
  $v\in V_G$. The \emph{wreath product $H\wr G$ of $H$ with $G$ }is
  the graph with vertex set
  \[
  V_H^{(V_G)}\times V_G=\big\{(f,v):\,f\colon V_G\to V_H \text{
    finitely supported, }v\in V_G\}\ ,
  \]
  and two vertices $(f,x)$ and $(g,y)$ are connected by an edge if and
  only if \emph{either }$f=g$ and $xy$ is an edge in $G$ \emph{or
  }$x=y$, $f(v)=g(v)$ for every $v\in G\setminus\{x\}$, and $f(x)g(x)$
  is an edge in $H$. As in the special case above, it is easy to
  verify that if $G$ and $H$ are Cayley graphs of groups $\Gamma$ and
  $\Delta$, respectively, then $H\wr G$ is the Cayley graph of
  $\Delta\wr\Gamma$ with respect to a suitable generating set.
  
In this paper we are concerned with lamplighter graphs. It is not too hard to verify that some of our results extend fairly easily to more general wreath products. As wreath products with $\bz_2$ are of greatest interest, we prefer to state and
prove our results only for such products.  One
justification for concentrating on lamplighter graphs instead of more
general wreath products is as follows. Any finite graph $H$ is Lipschitz
isomorphic to the complete graph $S$ on the vertex set of $H$ with
distortion $D$ depending on $H$. This naturally induces a Lipschitz
isomorphism between $H\wr G$ and $S\wr G$ with distortion at most $D$
for any graph $G$. An argument similar to \cite[Lemma 2.1]{naor-peres:11} shows that the bi-Lipschitz embeddability into $L_p$ of
$S\wr G$ and $\la(G)$ are the same up to universal constants.
\end{rem}

\section{Embeddability of lamplighter graphs over trees into Hamming cubes}

\subsection{The Travelling Salesman Problem for trees}\label{sec:3.1}

A \emph{tree }is a connected acyclic graph, \ie a connected graph in
which there is no path $(x_0,\dots,x_n)$ with $n\geq 3$ and
$x_0=x_n$. Equivalently, a tree is a graph such that for any two
vertices $x$ and $y$ there is a unique path from $x$ to $y$. \Eg every
binary tree is a tree.

We now fix a tree $T$ for the rest of this section. For vertices
$x,y\in T$ we denote by $p(x,y)$ the unique path in $T$ from $x$ to
$y$. If $p(x,y)=(x_0,x_1,\dots,x_n)$, then we let $p_i(x,y)=x_i$ for
$0\leq i\leq n$, and we also let
$[x,y]=\{x_{i-1}x_i:\,1\leq i\leq n\}$ be the set of edges on the path
$p(x,y)$. By definition of a path, every edge in $[x,y]$ occurs
exactly once, and so $\bigabs{[x,y]}=d_{T}(x,y)$. It is also clear
that if $p(x,y)=(x_0,x_1,\dots,x_n)$, then
$p(y,x)=(x_n,x_{n-1},\dots,x_1,x_0)$, and hence $[x,y]=[y,x]$.

For $x\in T$ and for $A\subset T$ we let
$[x,A]=\bigcup_{a\in A} [x,a]$. Note that if $A$ is finite, then so is
$[x,A]$. We are now ready to provide a closed formula for the
Travelling Salesman Problem on a tree.

\begin{thm}
  \label{thm:tsp-in-tree}
  For $x,y\in T$ and a finite $A\subset T$, we have
  \[
  \tsp_{T}(x,A,y)=2\bigabs{[x,A]\setminus [x,y]}+\bigabs{[x,y]}\ .
  \]
\end{thm}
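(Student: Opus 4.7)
The plan is to prove the two inequalities separately, using an edge-counting parity argument for the lower bound and an explicit depth-first construction for the upper bound. For any edge $e$ of $T$, removing $e$ splits $T$ into two connected components, which I will call the two \emph{sides} of $e$. Note that $e\in[x,y]$ iff $x$ and $y$ lie on opposite sides of $e$, while $e\in[x,A]$ iff some $a\in A$ lies on the opposite side of $e$ from $x$.

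For the lower bound, fix any walk $w=(z_0,z_1,\dots,z_N)$ in $T$ from $x=z_0$ to $y=z_N$ with $A\subseteq\{z_0,\dots,z_N\}$, and for each edge $e$ let $n_e(w)$ denote the number of indices $i$ with $\{z_{i-1},z_i\}=e$, so that $N=\sum_e n_e(w)$. Each traversal of $e$ by $w$ switches the side on which the walk currently lies; hence $n_e(w)$ is odd iff $x$ and $y$ lie on opposite sides of $e$, iff $e\in[x,y]$. If $e\in[x,A]\setminus[x,y]$, then $x$ and $y$ lie on the same side of $e$ while some $a\in A$ lies on the other, forcing $n_e(w)$ to be positive and even, hence $\geq 2$. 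Summing over $e$ gives $N\geq \abs{[x,y]}+2\abs{[x,A]\setminus[x,y]}$, and minimising over admissible walks yields the required lower bound for $\tsp_T(x,A,y)$.

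For the upper bound I would exhibit an explicit admissible walk of the claimed length within the subtree $T_0\subseteq T$ whose edge set is $[x,y]\cup[x,A]$, which contains $\{x,y\}\cup A$ since a union of paths emanating from $x$ in a tree is connected. Write $p(x,y)=(u_0,u_1,\dots,u_n)$ with $u_0=x$ and $u_n=y$, and for each $0\leq i\leq n$ let $E_i$ be the edge set of the maximal subtree of $T_0$ rooted at $u_i$ that uses no edge of $p(x,y)$. Since $T$ is acyclic, any edge $e\in[x,A]\setminus[x,y]$ lies on some path $[x,a]$ with $a\in A$; this path agrees with $p(x,y)$ up to a unique vertex $u_i$ after which it enters a single $E_i$, so $e$ belongs to exactly one $E_i$. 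Thus the $E_i$ partition $[x,A]\setminus[x,y]$. A routine induction on $\abs{E}$ shows that any finite tree rooted at $r$ with edge set $E$ admits a closed walk at $r$ of length $2\abs{E}$ visiting every vertex. Concatenating such a closed walk at each $u_i$ with the single edges of $p(x,y)$ yields a walk from $x$ to $y$ inside $T_0$ of total length $\abs{[x,y]}+2\abs{[x,A]\setminus[x,y]}$ visiting every vertex of $T_0$, and in particular every vertex of $A$. The only obstacle is the bookkeeping in this final step, which reduces entirely to the acyclicity of $T$.
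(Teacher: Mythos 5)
Your proposal is correct, and both halves would compile into a complete proof, but the route differs from the paper's in an interesting way. For the lower bound, the paper first proves two auxiliary lemmas (that $[x,a]\setminus[x,y]=[y,a]\setminus[x,y]$, and that any walk from $x$ to $y$ traverses every edge of $[x,y]$) and then exhibits, for each $e\in[x,A]\setminus[x,y]$, two distinct traversal indices by splitting the walk at a visit to a suitable $a\in A$. Your edge-cut parity argument reaches the same conclusion more directly: since each edge of a tree is a bridge, $n_e(w)$ has the parity of ``$x,y$ separated by $e$'', and positivity of $n_e(w)$ for $e\in[x,A]$ is forced by the visit to $a$; ``positive and even'' then gives $n_e(w)\geq 2$ without either lemma. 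This is a genuinely cleaner mechanism for the factor of $2$, at the mild cost of invoking the separation characterizations of $[x,y]$ and $[x,A]$ (which are themselves immediate from uniqueness of paths). For the upper bound, your construction and the paper's are essentially the same depth-first walk, organized differently: the paper proves the closed-walk case $\tsp_T(x,A,x)=2\abs{[x,A]}$ by recursion on $\max_{a\in A}d(x,a)$ and then inducts on $d_T(x,y)$, peeling off one edge of $p(x,y)$ at a time, whereas you decompose $[x,A]\setminus[x,y]$ in one shot into the hanging subtrees $E_i$ along $p(x,y)$ and splice in an Euler tour of each. The facts you defer as ``bookkeeping'' (that the $E_i$ are pairwise disjoint and exhaust $[x,A]\setminus[x,y]$, and the $2\abs{E}$ closed-walk lemma) are exactly the content of the paper's Lemma~\ref{lem:subtrees} and Theorem~\ref{thm:tsp-in-tree-special}, so nothing essential is missing; your version trades the paper's careful set-theoretic identities for a slightly more geometric, picture-driven decomposition.
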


We begin the proof with a couple of simple lemmas.

\begin{lem}
  \label{lem:off-road-edges}
  Let $x,y,a$ be vertices of $T$. Then
  $[x,a]\setminus [x,y]=[y,a]\setminus [x,y]$.
\end{lem}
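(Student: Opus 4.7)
The plan is to exploit the fundamental structural fact that every edge of a tree is a \emph{bridge}: removing any edge $e$ of $T$ disconnects $T$ into exactly two components, say $T_1^e$ and $T_2^e$. With this in hand, an edge $e$ belongs to $[u,v]$ if and only if $u$ and $v$ lie in different components of $T\setminus\{e\}$, since the unique $u$-$v$ path must cross from one side of the bridge to the other.

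From here the lemma reduces to a direct case analysis on an arbitrary edge $e$. Assume without loss of generality that $x$ belongs to $T_1^e$. Then $e\notin[x,y]$ forces $y\in T_1^e$; under that hypothesis, both membership conditions $e\in[x,a]$ and $e\in[y,a]$ reduce to the single requirement that $a$ lies in $T_2^e$. Thus $e\in[x,a]\setminus[x,y]$ and $e\in[y,a]\setminus[x,y]$ are equivalent conditions, and the claimed set equality follows.

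The only real obstacle is setting up the bridge characterization cleanly; this is standard and can be handled in a sentence using the uniqueness of paths in $T$ together with the absence of cycles, though some care is warranted so that the argument applies verbatim to arbitrary, possibly infinite, trees. A viable alternative route would be to introduce the median vertex $z$ of $\{x,y,a\}$ --- the unique common vertex of the three paths $p(x,y)$, $p(x,a)$, $p(y,a)$, whose existence rests on the same uniqueness of paths --- and to verify the disjoint edge decompositions $[x,y]=[x,z]\cup[z,y]$, $[x,a]=[x,z]\cup[z,a]$ and $[y,a]=[y,z]\cup[z,a]$; both sides of the claimed equality then immediately simplify to $[z,a]$.
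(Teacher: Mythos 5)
Your proposal is correct, and your primary route is genuinely different from the paper's. The paper argues directly with path sequences: writing $p(x,y)=(x_0,\dots,x_m)$ and $p(x,a)=(y_0,\dots,y_n)$, it takes the maximal index $i$ at which the two paths agree, shows by a cycle-contradiction that $(y_n,\dots,y_{i+1},y_i=x_i,\dots,x_m)$ is the path $p(a,y)$, and then reads off that any edge of $[x,a]\setminus[x,y]$ lies beyond the branch point and hence on $p(a,y)$. This is essentially your ``alternative route'' via the median vertex $z=x_i=y_i$ and the decompositions $[x,a]=[x,z]\cup[z,a]$, \etc Your main argument instead characterizes membership of an edge in a path by a separation condition: $e\in[u,v]$ if and only if $u$ and $v$ lie in different components of $T\setminus\{e\}$, after which the lemma is a one-line equivalence (if $e\notin[x,y]$ then $x$ and $y$ are on the same side of $e$, so $e\in[x,a]\iff e\in[y,a]$). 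Both rest on uniqueness of paths and acyclicity, and both apply to infinite trees. Your bridge formulation is more conceptual and arguably cleaner; it would also immediately give the identity $[x_0,x]\symdif[x_0,y]=[x,y]$ used later in the proof of Theorem~\ref{thm:lamplighter-into-cube}. What the paper's construction buys is the explicit description of $p(a,y)$ as a concatenation around the branch point, which it reuses verbatim at that later point; if you adopt the bridge approach you should state and justify the separation characterization once (removing $e$ disconnects $T$, else $e$ together with a $u$--$v$ path in $T\setminus\{e\}$ yields a cycle; and the unique path crosses $e$ exactly when its endpoints are separated), since the paper nowhere records this fact.
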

\begin{proof}
  We may assume that $x,y,a$ are pairwise distinct, otherwise the
  result is clear. Let $p(x,y)=(x_0,x_1,\dots,x_m)$ and
  $p(x,a)=(y_0,y_1,\dots,y_n)$. Then $x_0=y_0=x$, $x_m=y$ and
  $y_n=a$. Choose $i$ maximal with $0\leq i\leq\min(m,n)$ such that
  $x_j=y_j$ for $0\leq j\leq i$. Then
  \[
  w=(y_n,y_{n-1},\dots,y_{i+1},y_i=x_i,x_{i+1},x_{i+2},\dots,x_m)
  \]
  is a walk from $a$ to $y$. We show that $w$ is in fact a path. If it
  is not, then we must have $i<\min(m,n)$ and $x_k=y_\ell$ for some
  $k,\ell$ with $i+1\leq k\leq m$ and $i+1\leq\ell\leq n$. Choosing
  $k$ minimal, we obtain a cycle
  \[
  p=(x_i,x_{i+1},\dots,x_k=y_\ell,y_{\ell-1},\dots,y_{i+1},y_i)
  \]
  in $T$. Indeed, $x_i=y_i$ and there is no other repetition of
  vertices by minimality of $k$. Thus, $p$ is a path from $x_i$ to
  $y_i$. Moreover, since $x_{i+1}\neq y_{i+1}$, either $k>i+1$ or
  $\ell>i+1$, and hence the length $(k-i)+(\ell-i)$ of $p$ is
  at least~3. This contradiction completes the proof that $w$ is a
  path, and so $p(a,y)=w$.

  Now let $e\in[x,a]\setminus[x,y]$. Since $e\in [x,a]$, we have
  $e=y_{j-1}y_j$ for some $1\leq j\leq n$, and since $e\notin [x,y]$,
  we must have $i<j$. It follows that $e$ is also on the path
  $w=p(a,y)$, \ie that $e\in[y,a]$. The inclusion
  $[x,a]\setminus [x,y]\subset [y,a]\setminus [x,y]$ follows, and the
  reverse inclusion holds by symmetry in $x,y$.
\end{proof}

The next lemma shows that any walk from $x$ to $y$ must travel through
every edge in the unique path from $x$ to $y$.

\begin{lem}
  \label{lem:unavoidable-edges}
  Let $x,y\in T$ and $w=(w_0,w_1,\dots,w_n)$ be a walk from $x$ to
  $y$. Then for every $e\in[x,y]$ there exists $1\leq j\leq n $ such
  that $e=w_{j-1}w_j$.
\end{lem}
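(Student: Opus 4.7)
The plan is to exploit the tree's defining property that removing any edge produces a disconnection, and then use a simple connectivity argument on the walk.

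First I would fix an arbitrary edge $e=uv\in[x,y]$ and consider the graph $T-e$ obtained from $T$ by deleting the edge $e$. Since $T$ is a tree, $T-e$ has exactly two connected components, which I will call $T_u$ (containing $u$) and $T_v$ (containing $v$). To see this, note that the edge $e$ cannot lie on a cycle, so its removal must disconnect $T$; and the two endpoints $u,v$ must end up in different components, since otherwise there would be a second $u$--$v$ path in $T$, contradicting the uniqueness of paths in a tree.

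Next I would locate $x$ and $y$ in these two components. Write $p(x,y)=(x_0,x_1,\dots,x_m)$, and pick $i$ with $e=x_{i-1}x_i$; by the symmetry of $e$ I may assume $u=x_{i-1}$ and $v=x_i$. Then $(x_0,\dots,x_{i-1})$ is a walk in $T-e$, so $x\in T_u$; similarly $(x_i,\dots,x_m)$ shows $y\in T_v$.

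Finally I would run the crossing argument on the walk $w=(w_0,\dots,w_n)$. Since $w_0=x\in T_u$ and $w_n=y\in T_v$, the set $\{k:w_k\in T_v\}$ is non-empty, so it has a least element $j$, and $j\geq 1$ because $w_0\in T_u$. By minimality, $w_{j-1}\in T_u$, while $w_j\in T_v$. Then the edge $w_{j-1}w_j$ joins a vertex of $T_u$ to a vertex of $T_v$; but the only edge of $T$ with this property is $e$ itself (any other such edge would yield a cycle in $T$), so $w_{j-1}w_j=e$, as required.

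The whole argument is essentially routine once one observes the key tree-theoretic fact that $T-e$ splits into exactly two components joined solely by $e$; the only minor care needed is the clean identification of the components containing $x$ and $y$ via the path $p(x,y)$, which is immediate from the uniqueness of paths in a tree.
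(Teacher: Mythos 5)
Your proof is correct, but it takes a genuinely different route from the paper's. You use the standard cut-edge property of trees: deleting $e=uv$ splits $T$ into exactly two components $T_u\ni x$ and $T_v\ni y$ joined only by $e$, and then a first-crossing argument on the walk forces some step $w_{j-1}w_j$ to equal $e$. The paper instead stays within the machinery it set up at the start of Section~2 --- namely that every walk between two vertices contains a path between them as a subsequence --- and runs a maximality argument: it picks the last index $j$ with $w_{j-1}=x_{i-1}$ and shows that if $w_j\neq x_i$ then the path from $w_j$ to $y$ would force a later occurrence of $x_{i-1}$, contradicting maximality. Your approach is arguably the more classical and transparent one (it isolates the reusable fact that every tree edge is a bridge), at the cost of a short digression to establish the two-component decomposition and the uniqueness of the crossing edge; the paper's approach avoids introducing components but leans on the subsequence lemma and a slightly more delicate extremal argument. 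Both arguments work equally well for infinite trees, and each of your auxiliary claims (that $u,v$ land in different components of $T-e$, that $x\in T_u$ and $y\in T_v$ via the segments of $p(x,y)$, and that $e$ is the unique edge between the components) is adequately justified from the uniqueness of paths in $T$.
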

\begin{proof}  
  Let $p(x,y)=(x_0,x_1,\dots,x_m)$. Then $e=x_{i-1}x_i$ for some
  $1\leq i\leq m$. We observed at the start of the
  previous section that in any graph, every walk between vertices
  contains a subsequence which is a path between the same vertices. It
  follows that $p(x,y)$ is a subsequence of $w$. Hence there is a
  maximal $j$, $1\leq j\leq n$, such that $w_{j-1}=x_{i-1}$. If
  $w_j=x_i$, then we are done. So let us assume $w_j\neq x_i$. Then
  $w_j\notin \{ x_k:\, i\leq k\leq m\}$ since otherwise we obtain a
  cycle in $T$. It follows that
  \[
  p(w_j,y)=(w_j, x_{i-1},x_i,\dots,x_m)\ ,
  \]
  which therefore must be a subsequence of the walk
  $(w_j,w_{j+1},\dots,w_n)$. In particular, $x_{i-1}=w_{k-1}$ for some
  $j<k\leq n$, which contradicts the maximality of $j$.
\end{proof}

We are now ready to prove the lower bound for the Travelling Salesman
Problem in $T$.

\begin{prop}
  \label{prop:tsp-in-tree-lower}
  For $x,y\in T$ and a finite $A\subset T$, we have
  \[
  \tsp_T(x,A,y)\geq 2\bigabs{[x,A]\setminus [x,y]}+\bigabs{[x,y]}\ .
  \]
\end{prop}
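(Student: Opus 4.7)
The plan is to bound $\tsp_T(x,A,y)$ from below by counting, for each edge $e$ of $T$, the number of times any salesman walk $w=(w_0,w_1,\dots,w_n)$ from $x=w_0$ to $y=w_n$ through $A$ must traverse $e$. Writing $n_e=\abs{\{j:\{w_{j-1},w_j\}=e\}}$, we have $n=\sum_e n_e$, so it suffices to prove $n_e\geq 1$ for $e\in[x,y]$ and $n_e\geq 2$ for $e\in[x,A]\setminus[x,y]$, since these two edge sets are disjoint.

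The first bound is immediate from Lemma~\ref{lem:unavoidable-edges}: any walk from $x$ to $y$ must traverse every edge in $[x,y]$ at least once. The second bound rests on the standard observation that deleting any edge $e=uv$ from a tree disconnects it into two components $C_u\ni u$ and $C_v\ni v$. If $e\notin[x,y]$, then $x$ and $y$ lie in the same component, say both in $C_u$; if in addition $e\in[x,A]$, then some $a\in A$ satisfies $e\in[x,a]$, which forces $a\in C_v$. The walk $w$ starts in $C_u$, must visit $a\in C_v$, and ends back in $C_u$, so it crosses $e$ an even, hence at least~2, number of times; thus $n_e\geq 2$.

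Summing these inequalities and using that $[x,y]$ and $[x,A]\setminus[x,y]$ are disjoint subsets of the edge set gives
\[
n=\sum_e n_e\geq \bigabs{[x,y]}+2\bigabs{[x,A]\setminus[x,y]}\ ,
\]
which is the desired lower bound after minimizing over $w$. The only real step is the component-counting argument for edges outside $[x,y]$; there is no serious obstacle, but one does have to keep straight that $[x,y]$ and $[x,A]\setminus[x,y]$ are genuinely disjoint (so the contributions add without overlap), and that the parity argument for $n_e\geq 2$ uses precisely the tree structure (an edge is a minimal cut) rather than merely the fact that $e$ lies on the path from $x$ to~$a$.
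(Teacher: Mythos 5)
Your proof is correct. The overall framework is the same as the paper's: count, for each edge $e$, the number of traversals $n_e$ in an optimal walk, show $n_e\geq 1$ for $e\in[x,y]$ via Lemma~\ref{lem:unavoidable-edges}, show $n_e\geq 2$ for $e\in[x,A]\setminus[x,y]$, and sum over the two disjoint edge sets. Where you differ is in the justification of the bound $n_e\geq 2$. The paper splits the walk at an index $i$ with $w_i=a$ (where $a\in A$ satisfies $e\in[x,a]\setminus[x,y]$), applies Lemma~\ref{lem:unavoidable-edges} to the segment from $x$ to $a$ to find one traversal, invokes Lemma~\ref{lem:off-road-edges} to conclude $e\in[y,a]$, and applies Lemma~\ref{lem:unavoidable-edges} again to the segment from $a$ to $y$ to find a second, distinct traversal. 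You instead use that every edge of a tree is a bridge: deleting $e$ separates $a$ from both $x$ and $y$, and a walk starting and ending on one side of the cut while visiting the other side must cross $e$ a positive even number of times. Your argument is self-contained in the sense that it bypasses Lemma~\ref{lem:off-road-edges} entirely, at the cost of importing the (standard, but unproved in the paper) facts that $e$ is the unique edge joining the two components of $T-e$ and that the parity of crossings is determined by the endpoints; the paper's version buys uniformity, reusing only the two lemmas it has already established. One small point of care in your write-up: ``even, hence at least $2$'' needs the accompanying observation that the walk crosses at least once because it must reach $a$ --- you do say this, but the logical order (positive \emph{and} even, hence $\geq 2$) is worth making explicit.
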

\begin{proof}
  Let $n=\tsp_T(x,A,y)$ and let $w=(w_0,w_1,\dots,w_n)$ be a walk from
  $x$ to $y$ such that $A\subset\{w_0,w_1,\dots,w_n\}$. By
  Lemma~\ref{lem:unavoidable-edges}, for every $e\in[x,y]$, there is
  at least one $j\in\{1,2,\dots,n\}$ such that $e=w_{j-1}w_j$.

  Now assume $e\in [x,A]\setminus[x,y]$. Then
  $e\in[x,a]\setminus[x,y]$ for some $a\in A$. Choose $i$ with
  $0\leq i\leq n$ and $a=w_i$. Then $(w_0,w_1,\dots,w_i)$ is a walk
  from $x$ to $a$, and hence by Lemma~\ref{lem:unavoidable-edges},
  $e=w_{j-1}w_j$ for some $1\leq j\leq i$. On the other hand, by
  Lemma~\ref{lem:off-road-edges} we also have $e\in[a,y]$. Since
  $(w_i,w_{i+1},\dots,w_n)$ is a walk from $a$ to $y$, it follows that
  $e=w_{k-1}w_k$ for some $i<k\leq n$. Since $j\neq k$, it follows
  that every edge in $[x,A]\setminus [x,y]$ appears at least twice in
  the walk $w$. The result follows.
\end{proof}

We next introduce some more notation. For $x\in T$ we denote by
$N_x$ the set of \emph{neighbours of $x$ }given by
$N_x=\{y\in T:\,xy\in E(T)\}$. For $y\in N_x$ we let
$T_{x,y}=\{z\in T:\,p_1(x,z)=y\}$, and for $y\in N_x$ and $A\subset T$
we let $A_{x,y}=A\cap T_{x,y}$. We now establish some simple
properties.

\begin{lem}
  \label{lem:subtrees}
  Fix $x\in T$ and $A\subset T$. We have then the following.
  \begin{mylist}{(iii)}
  \item
    $T=\{x\}\cup \bigcup_{y\in N_x} T_{x,y}$. 
  \item
    For $y\in N_x$ and $z\in T_{x,y}$, we have
    $[x,z]=\{xy\}\cup [y,z]$. Moreover, the endvertices of an edge in
    $[x,z]$ lie in $\{x\}\cup T_{x,y}$.
  \item
    $A\setminus\{x\}=\bigcup_{y\in N_x} A_{x,y}$ and
    $[x,A]=\bigcup_{y\in N_x} [x,A_{x,y}]$.
  \item
    Given $y\in N_x$, if $A_{x,y}\neq\emptyset$, then
    $[x,A_{x,y}]=\{xy\}\cup [y,A_{x,y}]$.
  \end{mylist}
  Furthermore, all unions above are disjoint unions.
\end{lem}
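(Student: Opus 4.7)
The plan is to deduce all four assertions from a single underlying fact: in a tree, every path is unique, and the second vertex $p_1(x,z)$ of any path emanating from $x\in T$ is uniquely determined by $z$. The arguments are essentially bookkeeping once this is exploited.

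First I would establish (i). Any $z\neq x$ lies in $T_{x,y}$ for the unique $y=p_1(x,z)\in N_x$, so the covering $T=\{x\}\cup\bigcup_{y\in N_x}T_{x,y}$ and the disjointness of the union follow simultaneously from the uniqueness of $p_1(x,z)$. Next, for (ii), I would write $p(x,z)=(x,y,x_2,\dots,x_n)$; since a subwalk of a path has no repeated vertices, $(y,x_2,\dots,x_n)$ must be the unique path $p(y,z)$, yielding the edge decomposition $[x,z]=\{xy\}\cup[y,z]$. The endvertex claim follows from the observation that for each $i\geq 1$ the initial segment $(x,y,\dots,x_i)$ is a path from $x$ to $x_i$ beginning with $xy$, so $x_i\in T_{x,y}$.

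Statement (iii) is then a direct consequence: the first equality is (i) intersected with $A$, and the second follows by splitting $[x,A]=\bigcup_{a\in A}[x,a]$ according to which $T_{x,y}$ contains $a$ (with the convention $[x,x]=\emptyset$ handling the case $a=x$). For disjointness of $\bigcup_{y\in N_x}[x,A_{x,y}]$, I would appeal to the second half of (ii): if an edge $e$ belonged to $[x,A_{x,y}]\cap[x,A_{x,y'}]$ with $y\neq y'$, both endpoints of $e$ would lie in $(\{x\}\cup T_{x,y})\cap(\{x\}\cup T_{x,y'})=\{x\}$, which is impossible since the two endpoints of an edge are distinct.

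Finally, (iv) is immediate from (ii) by taking the union over $z\in A_{x,y}$ of the equality $[x,z]=\{xy\}\cup[y,z]$, the non-emptiness of $A_{x,y}$ being precisely what keeps the $\{xy\}$ term nontrivial. I do not anticipate any genuine obstacle; the only mildly delicate step is the disjointness of the edge decomposition in (iii), which is where the ``branch confinement'' supplied by (ii) — that every edge of $[x,A_{x,y}]$ is confined to the subgraph on $\{x\}\cup T_{x,y}$ — plays its essential role.
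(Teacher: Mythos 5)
Your proposal is correct and follows essentially the same route as the paper: (i) and (ii) from uniqueness of paths in a tree, (iii) and (iv) by assembling the edge sets $[x,a]$ according to the branch containing $a$, with the disjointness in (iii) obtained from the endvertex confinement in (ii). The only difference is cosmetic — you spell out the disjointness argument in (iii) slightly more explicitly than the paper does.
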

\begin{proof}
  Given a vertex $z\neq x$, let $p(x,z)=(x_0,x_1,\dots,x_n)$. Then
  $n\geq 1$, $y=x_1$ is a neighbour of $x_0=x$, and
  $z\in T_{x,y}$. It is clear that $x\notin T_{x,y}$ for any
  $y\in N_x$. Moreover, it is immediate from definition that
  $T_{x,y}\cap T_{x,z}=\emptyset$ for distinct neighbours $y,z$ of
  $x$. Thus, (i) follows.

  To see (ii), let $p(x,z)=(x_0,x_1,\dots,x_n)$. Then $n\geq 1$,
  $x_0=x$, $x_1=y$ and $p(y,z)=(x_1,\dots,x_n)$. Hence
  $[x,z]=\{xy\}\cup[y,z]$, and $xy\notin[y,z]$ since the vertices
  $x_0,x_1,\dots,x_n$ are pairwise distinct. For $1\leq i\leq n$, we
  have $p(x,x_i)=(x_0,x_1,\dots,x_i)$, and hence $p_1(x,x_i)=y$. This
  implies that $\{x_0,x_1,\dots,x_n\}\subset \{x\} \cup T_{x,y}$ and
  the second part of~(ii) follows.

  It follows from~(i) that
  $A\setminus\{x\}=\bigcup_{y\in N_x} A_{x,y}$ and that this is a
  disjoint union. The second part of~(iii) now follows:
  \[
    [x,A]=\bigcup_{a\in A} [x,a]=\bigcup_{y\in N_x} \bigcup_{a\in
      A_{x,y}} [x,a]= \bigcup_{y\in N_x} [x,A_{x,y}]\ ,
  \]
  and moreover, since the endvertices of an edge in $[x,A_{x,y}]$ lie
  in $\{x\}\cup T_{x,y}$, it follows that the sets $[x,A_{x,y}]$,
  $y\in N_x$, are pairwise disjoint.

  Finally, we establish (iv). If $A_{x,y}\neq\emptyset$, then
  from~(ii) it follows that
  \[
  [x,A_{x,y}]=\bigcup_{a\in A_{x,y}} [x,a] =\bigcup_{a\in A_{x,y}}
  \{xy\}\cup [y,a] = \{xy\}\cup [y,A_{x,y}]
  \]
  and the union is a disjoint union.
\end{proof}

We next prove Theorem~\ref{thm:tsp-in-tree} in a special case.

\begin{thm}
  \label{thm:tsp-in-tree-special}
  For $x\in T$ and a finite $A\subset T$, we have
  $\tsp_T(x,A,x)=2\bigabs{[x,A]}$.
\end{thm}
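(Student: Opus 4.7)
The plan is to obtain the two inequalities separately; the lower bound is essentially free from what is already proved, and the upper bound is a clean depth-first traversal argument by induction.

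For the lower bound, apply Proposition~\ref{prop:tsp-in-tree-lower} with $y=x$. Since $p(x,x)=(x)$ is the trivial path, $[x,x]=\emptyset$, and the stated inequality collapses to $\tsp_T(x,A,x)\geq 2\bigabs{[x,A]}$.

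For the upper bound, the strategy is to prove, by induction on $\bigabs{[x,A]}$, the following statement: for every vertex $x\in T$ and every finite $A\subset T$ there is a walk from $x$ to $x$ of length at most $2\bigabs{[x,A]}$ that visits every vertex in $A$. The base case is $\bigabs{[x,A]}=0$, which forces $A\subseteq\{x\}$, and the trivial walk $(x)$ of length $0$ works. For the inductive step, use Lemma~\ref{lem:subtrees}: write
\[
[x,A]=\bigsqcup_{y\in N_x}[x,A_{x,y}],
\]
let $B=\{y\in N_x:\,A_{x,y}\neq\emptyset\}$, and note that for each $y\in B$ part~(iv) of that lemma gives $[x,A_{x,y}]=\{xy\}\sqcup [y,A_{x,y}]$, so $\bigabs{[y,A_{x,y}]}<\bigabs{[x,A]}$. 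By the inductive hypothesis there is, for each $y\in B$, a walk $w_y$ from $y$ to $y$ of length at most $2\bigabs{[y,A_{x,y}]}$ that visits every vertex in $A_{x,y}$. Now concatenate: starting at $x$, for each $y\in B$ in some order, take the edge $xy$, execute $w_y$, and return along the edge $yx$. The resulting walk goes from $x$ to $x$, visits $x$ itself as well as every $A_{x,y}$ (and hence all of $A$), and has length at most
\[
\sum_{y\in B}\bigl(2+2\bigabs{[y,A_{x,y}]}\bigr)=\sum_{y\in B}2\bigabs{[x,A_{x,y}]}=2\bigabs{[x,A]},
\]
the last equality using part~(iii) of Lemma~\ref{lem:subtrees}.

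I do not foresee any real obstacle here: the lower bound is a direct corollary of a statement already proved, and the upper bound is just a careful bookkeeping for the Eulerian-style traversal of the subtree spanned by $[x,A]$, with Lemma~\ref{lem:subtrees} providing exactly the decomposition needed to run the induction. The only thing to watch is that the recursively produced sub-walks stay inside $\{x\}\cup T_{x,y}$ (which is automatic since $w_y$ lives in the subtree rooted at $y$), so that the $A_{x,y}$ for different $y$ do not interfere and the length adds up as claimed.
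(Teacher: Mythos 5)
Your proof is correct and follows essentially the same route as the paper: the lower bound comes from Proposition~\ref{prop:tsp-in-tree-lower} with $y=x$, and the upper bound is the same recursive depth-first traversal built from Lemma~\ref{lem:subtrees}, concatenating the sub-walks $w_y$ over the neighbours $y$ with $A_{x,y}\neq\emptyset$. The only (cosmetic) difference is that you induct on $\bigabs{[x,A]}$ while the paper inducts on $\max_{a\in A}d(x,a)$; both quantities strictly decrease when passing from $(x,A)$ to $(y,A_{x,y})$, so either works.
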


\begin{proof}
  We may assume $A\neq \emptyset$ otherwise the result is clear. Let
  us define $h=h(x,A)=\max_{a\in A} d(x,a)$. We construct by recursion
  on~$h$ a walk $w$ from $x$ to $x$ of length $n=2\abs{[x,A]}$
  visiting all vertices in $A$. Together with
  Proposition~\ref{prop:tsp-in-tree-lower}, this will complete the
  proof.

  If $h=0$, then $A=\{x\}$ and the result is clear. Let us now assume
  that $h\geq 1$. Set $N=N_x$ and $A_y=A_{x,y}$ for each $y\in N$. Let
  $M=\{ y\in N:\, A_y\neq \emptyset\}$.

  Fix $y\in M$. For every $a\in A_y$, we have $p(x,a)=(x,p(y,a))$, and
  thus, $h(y,A_y)\leq h-1$. By recursion, there is a walk $w^{(y)}$
  from $y$ to $y$ of length $2\bigabs{[y,A_y]}$ visiting all vertices of
  $A_y$. Now since $A$ is finite, so is $M$, which we can then
  enumerate as $y_1,y_2,\dots,y_k$. Then
  \[
  w=\big(x,w^{(y_1)},x,w^{(y_2)},x,\dots,x,w^{(y_k)},x\big)
  \]
  is a walk from $x$ to $x$ visiting all vertices in
  $\bigcup_{y\in M} A_y$. It follows from Lemma~\ref{lem:subtrees}
  that $w$ visits all vertices in $A$ and has length
  \[
  2k+\sum_{y\in M} 2\bigabs{[y,A_y]} = \sum_{y\in M} 2\bigabs{\{xy\}\cup
    [y,A_y]} =\sum_{y\in M} 2\bigabs{[x,A_y]}=2\bigabs{[x,A]}\ .
  \]
\end{proof}

We are finally ready to complete the proof of our main result.

\begin{proof}[Proof of Theorem~\ref{thm:tsp-in-tree}]
  We proceed by induction on $d_{T}
  (x,y)$ and construct a walk from $x$ to
  $y$ of length $2\bigabs{[x,A]\setminus [x,y]}+\bigabs{[x,y]}$ visiting
  all vertices of $A$. Together with
  Proposition~\ref{prop:tsp-in-tree-lower}, this will complete the
  proof.

  When $d_{T}(x,y)=0$, the result follows from
  Theorem~\ref{thm:tsp-in-tree-special}. Now assume $d_{T}(x,y)\geq 1$ and
  set $x_1=p_1(x,y)$. Let $N=N_x$ and $A_y=A_{x,y}$ for all
  $y\in N$. Set $A_0=\bigcup_{z\in N, z\neq x_1} A_z$ and
  $A_1=A_{x_1}$. From Lemma~\ref{lem:subtrees} we have the following.
  \begin{equation*}
    \begin{aligned}
      A\setminus\{x\} &= \bigcup_{z\in N_x} A_z =A_0\cup A_1\\
      [x,A_1] &= \{xx_1\}\cup [x_1,A_1] \quad \text{if
        $A_1\neq\emptyset$} \\
      [x,y] &= \{xx_1\}\cup [x_1,y]\\
      [x,A] &= \bigcup_{z\in N_x} [x,A_z] = [x,A_0]\cup [x,A_1]\\
      [x,A_0]\cap [x,y] &= \emptyset
    \end{aligned}
  \end{equation*}
  and moreover, all unions are disjoint unions. From this we obtain
  \begin{equation}
    \label{eq:tsp-in-tree}
          [x_1,A_1]\setminus [x_1,y]=[x,A_1]\setminus [x,y]
          \quad\text{and}\quad 
          [x,A]\setminus[x,y]=[x,A_0]\cup
          \big([x,A_1]\setminus[x,y]\big)
  \end{equation}
  By Theorem~\ref{thm:tsp-in-tree-special}, there is a walk $w^{(0)}$
  from $x$ to $x$ of length $\ell_0=2\bigabs{[x,A_0]}$ visiting all
  vertices in $A_0$. By induction hypothesis, there is a walk
  $w^{(1)}$ from $x_1$ to $y$ of length
  $\ell_1=2\bigabs{[x_1,A_1]\setminus [x_1,y]}+\bigabs{[x_1,y]}$ visiting
  all vertices in $A_1$. It follows that $w=(w^{(0)},w^{(1)})$ is a walk
  from $x$ to $y$ visiting all vertices in
  $\{x\}\cup A_0\cup A_1=\{x\}\cup A$. Let $\ell$ be the length of
  $w$. Then from~\eqref{eq:tsp-in-tree} we obtain
  \begin{equation*}
    \begin{split}
      \ell &= \ell_0+\ell_1+1=2\bigabs{[x,A_0]} +
      2\bigabs{[x_1,A_1]\setminus [x_1,y]}+\bigabs{[x_1,y]} + 1 \\
      &= 2\bigabs{[x,A_0]} + 2\bigabs{[x,A_1]\setminus
        [x,y]}+\bigabs{[x,y]} = 2\bigabs{[x,A]\setminus
        [x,y]}+\bigabs{[x,y]}\ ,
    \end{split}
  \end{equation*}
  as required.
\end{proof}

\begin{rem}
  It is not hard to see that the proof of
  Theorem~\ref{thm:tsp-in-tree} yields an efficient algorithm for
  finding optimal walks in $T$ for the Travelling Salesman Problem.
\end{rem}

\subsection{Embeddability into Hamming cubes}

We will show that the lamplighter graph over a tree bi-Lipschitzly embeds
into a Hamming cube, and thus prove Theorem~\ref{mainthm:A}. Let us fix
a tree $T$. By Proposition~\ref{prop:graph-metric-in-lamplighter} and
Theorem~\ref{thm:tsp-in-tree}, the graph metric in the lamplighter
graph $\la(T)$ is given by
\begin{equation}
  \label{eq:metric-in=lamplighter-of-tree}
  d_{\la(T)}\big((A,x),(B,y)\big) = 2\bigabs{[x,A\symdif B]\setminus
    [x,y]}+\bigabs{[x,y]}+\abs{A\symdif B}\ .
\end{equation}
For $C\subset T$ let $[C]=\bigcup _{x,y\in C}[x,y]$ be the minimal set of
edges needed to travel between different vertices of $C$. Define
\[
I=\big\{ (e,C):\, e\in E(T),\ \emptyset\neq C\subset T,\ C\text{
  finite},\ e\notin [C]\big\}\ .
\]
For $A\subset T$, $x\in T$ and $e\in E(T)$, let
$A_{x,e}=\big\{a\in A:\, e\in[x,a]\big\}$. We now define a map into the
Hamming cube $\ham_I$ whose role is to capture the first of the three
summands in the right-hand side
of~\eqref{eq:metric-in=lamplighter-of-tree}.
\begin{lem}
  \label{lem:lamplighter-into-cube-prelim}
  Define $f\colon\la(T)\to \ham_I$ as follows. For
  $(A,x)\in\la(T)$ and $i\in I$ we let
  \[
  f(A,x)_i=1 \iff \E e\in E(T)\ A_{x,e}\neq\emptyset \text{ and }
  i=(e,A_{x,e})\ .
  \]
  Then for vertices $(A,x)$ and $(B,y)$ of $\la(T)$ we have
  \[
  \bigabs{[x,A\symdif B]\setminus [x,y]} \leq d_{\ham}\big(
  f(A,x),f(B,y)\big) \leq 2\bigabs{[x,A\symdif B]\setminus[x,y]} +
  2\bigabs{[x,y]}\ .
  \]
\end{lem}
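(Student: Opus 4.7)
The plan is to analyze $d_{\ham}\bigl(f(A,x), f(B,y)\bigr)$ one edge $e \in E(T)$ at a time, exploiting the fact that $T$ is a tree. I would first verify that $f$ is well-defined: whenever $A_{x,e} \neq \emptyset$, the pair $(e, A_{x,e})$ lies in $I$. Indeed, removing $e$ disconnects $T$ into exactly two components; every $a \in A_{x,e}$ satisfies $e \in [x,a]$, so $a$ lies in the component not containing $x$. Hence all of $A_{x,e}$ sits in a single component of $T \setminus \{e\}$, and the unique path between any two of its elements avoids $e$, giving $e \notin [A_{x,e}]$.

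Next I would observe that for each $e \in E(T)$, the only coordinates of the form $(e, \cdot)$ on which $f(A,x)$ or $f(B,y)$ can equal $1$ are $(e, A_{x,e})$ and $(e, B_{y,e})$ (and only when these sets are non-empty). A brief case analysis then shows that the contribution of these coordinates to $d_{\ham}\bigl(f(A,x),f(B,y)\bigr)$ equals $0$ when $A_{x,e} = B_{y,e}$ and lies in $\{1,2\}$ otherwise; in particular, the per-edge contribution is positive precisely when $A_{x,e} \neq B_{y,e}$, and never exceeds~$2$.

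The key geometric input is the following tree-cut observation: if $e \notin [x,y]$, then $x$ and $y$ lie in the same component of $T \setminus \{e\}$, so $A_{x,e} = A_{y,e}$ and $B_{x,e} = B_{y,e}$ for every finite $A, B \subset T$. Consequently $A_{x,e} \symdif B_{y,e} = (A \symdif B)_{x,e}$, and therefore $A_{x,e} \neq B_{y,e}$ if and only if $e \in [x, A \symdif B]$.

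From here the two bounds drop out. For the lower bound, every $e \in [x, A \symdif B] \setminus [x,y]$ contributes at least $1$ to $d_{\ham}$ by the previous paragraph. For the upper bound, I would partition the positively contributing edges into those lying in $[x,y]$ (at most $\abs{[x,y]}$ of them, each contributing at most $2$) and those outside $[x,y]$, which by the tree-cut observation must lie in $[x, A \symdif B] \setminus [x,y]$ (each contributing at most $2$). Summing gives the stated upper bound. The main conceptual step is the tree-cut observation linking $A_{x,e}$ to $A_{y,e}$; everything else reduces to routine bookkeeping.
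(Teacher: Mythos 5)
Your proof is correct and follows essentially the same route as the paper's: you identify the set of edges $e$ with $A_{x,e}\neq B_{y,e}$, bound the Hamming distance between once and twice its cardinality, and use the fact that for $e\notin[x,y]$ membership of $e$ in $[x,c]$ and in $[y,c]$ coincide (your tree-cut observation is exactly the content of the paper's Lemma~\ref{lem:off-road-edges}). Your explicit check that $(e,A_{x,e})$ really lies in $I$ (\ie $e\notin[A_{x,e}]$), which the lower bound silently relies on, is a welcome addition.
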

\begin{proof}
  We first check that $f$ is well-defined, \ie that $f(A,x)$ has
  finite support. Given $i=(e,C)\in I$, if $f(A,x)_i=1$, then
  $C=A_{x,e}\neq\emptyset$, and hence $e\in[x,A]$. It follows that the
  support of $f(A,x)$ has at most (in fact, exactly) $\bigabs{[x,A]}$
  elements. Since $A$ is finite, so is $[x,A]$, and hence $f(A,x)$ is
  finitely supported.

  We now turn to the inequalities. Given $i=(e,C)\in I$, we have
  \[
  f(A,x)_i\neq f(B,y)_i \iff A_{x,e}\neq B_{y,e} \text{ and
    \emph{either }}C=A_{x,e} \text{\ \emph{or }}C=B_{y,e}\ .
  \]
  Thus, setting
  $E=\{e\in E(T):\, A_{x,e}\neq B_{y,e} \text{ and \emph{either
  }}A_{x,e}\neq\emptyset \text{\ \emph{or }}B_{y,e}\neq\emptyset\}$,
  we have
  \begin{equation}
    \label{eq:lip-inequality-1}
    \abs{E}\leq d_\ham\big( f(A,x),f(B,y)\big) \leq 2\abs{E}\ .
  \end{equation}
  To estimate $\abs{E}$, let us first consider an edge
  $e\in E\setminus[x,y]$. By definition of $E$, there is a vertex
  $c\in A_{x,e}\symdif B_{y,e}$. Hence, using
  Lemma~\ref{lem:off-road-edges}, we have
  $e\in [x,c]\setminus [x,y]=[y,c]\setminus [x,y]$. It follows that
  $c\in A\symdif B$, and thus
  $e\in [x,A\symdif B]\setminus[x,y]$. This shows the upper bound
  \begin{equation}
    \label{eq:lip-inequality-2}
    \abs{E}\leq  \bigabs{[x,A\symdif B]\setminus[x,y]} +
    \bigabs{[x,y]}\ .
  \end{equation}
  Next consider $e\in[x,A\symdif B]\setminus [x,y]$. Then, using
  Lemma~\ref{lem:off-road-edges} again, we have some $c\in A\symdif B$
  such that $e\in [x,c]\setminus [x,y]=[y,c]\setminus [x,y]$. It
  follows that $c\in A_{x,e}\symdif B_{y,e}$, and hence $e\in E$. This
  yields the lower bound
  \begin{equation}
    \label{eq:lip-inequality-3}
    \bigabs{[x,A\symdif B]\setminus [x,y]} \leq \abs{E}\ .
  \end{equation}
  Combining the inequalities~\eqref{eq:lip-inequality-1},
  \eqref{eq:lip-inequality-2} and~\eqref{eq:lip-inequality-3}
  completes the proof of the lemma.
\end{proof}

After some definitions, we will state and prove the main result of
this section, which then immediately yields Theorem~\ref{mainthm:A}.
Note that for disjoint sets $J$ and $K$, the product
$\ham_J\cprod \ham_K$ is the Hamming cube $\ham_{J\cup K}$. In the
next result we identify the vertices of a Hamming cube $\ham_J$ with
finite subsets of $J$.

\begin{thm}
  \label{thm:lamplighter-into-cube}
  Let $T$ be a (non-empty) tree. Let $f\colon \la(T)\to \ham_I$ be
  the map from Lemma~\ref{lem:lamplighter-into-cube-prelim}. Fix
  $x_0\in T$ and define
  $F\colon \la(T)\to \ham_I\cprod \ham_{E(T)}\cprod \ham_T$ by
  \[
  F(A,x)=\big( f(A,x),[x_0,x],A \big)\ .
  \]
  Then $F$ is a bi-Lipschitz embedding with distortion at most~6.
\end{thm}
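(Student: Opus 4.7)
The plan is to bound, one coordinate at a time, the Hamming distance between $F(A,x)$ and $F(B,y)$. Using the formula from Proposition~\ref{prop:graph-metric-in-lamplighter} and Theorem~\ref{thm:tsp-in-tree} (equation~\eqref{eq:metric-in=lamplighter-of-tree}), write
\[
\alpha = \bigabs{[x, A\symdif B]\setminus [x,y]}, \qquad \beta = \bigabs{[x,y]}, \qquad \gamma = \abs{A\symdif B},
\]
so that $d_{\la(T)}\big((A,x),(B,y)\big) = 2\alpha + \beta + \gamma$. Since the Hamming metric on a product Hamming cube is the sum of its coordinate Hamming metrics, it suffices to estimate each of the three summands in
\[
d_\square\big(F(A,x), F(B,y)\big) = d_\ham\big(f(A,x), f(B,y)\big) + d_\ham\big([x_0,x], [x_0,y]\big) + d_\ham(A,B).
\]

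The first term is controlled by Lemma~\ref{lem:lamplighter-into-cube-prelim}, which gives $\alpha \leq d_\ham(f(A,x), f(B,y)) \leq 2\alpha + 2\beta$. The third term is trivially $\abs{A\symdif B} = \gamma$. The only step that warrants comment is the second term: I claim that in any tree
\[
[x_0, x]\symdif [x_0, y] = [x,y].
\]
Indeed, let $z$ be the unique vertex at which the paths $p(x_0,x)$ and $p(x_0,y)$ diverge, so that $[x_0,x]\cap [x_0,y] = [x_0,z]$. Then $[x_0,x] = [x_0,z]\sqcup [z,x]$ and $[x_0,y] = [x_0,z]\sqcup [z,y]$, so the symmetric difference equals $[z,x]\sqcup [z,y]$. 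Because $T$ is a tree, the unique path from $x$ to $y$ passes through $z$ and decomposes as $[x,y] = [z,x]\sqcup [z,y]$, verifying the claim. Consequently the second term contributes exactly $\beta$.

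Combining the three estimates gives
\[
\alpha + \beta + \gamma \;\leq\; d_\square\big(F(A,x),F(B,y)\big) \;\leq\; 2\alpha + 3\beta + \gamma.
\]
Comparing with $2\alpha + \beta + \gamma = d_{\la(T)}((A,x),(B,y))$, the upper bound is at most $3\cdot d_{\la(T)}$ (the worst case being $\alpha=\gamma=0$) and the lower bound is at least $\tfrac12\cdot d_{\la(T)}$ (the worst case being $\beta=\gamma=0$), yielding distortion at most $6$. Finite support of each component of $F(A,x)$ is immediate: for the first coordinate this was shown in Lemma~\ref{lem:lamplighter-into-cube-prelim}, the second coordinate is finite since $p(x_0, x)$ has finitely many edges, and the third is finite by assumption on $A$. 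The main (essentially only) obstacle is the Steiner-point identity above; everything else amounts to summing the three coordinate bounds and invoking the preceding lemma.
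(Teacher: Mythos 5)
Your proposal is correct and follows essentially the same route as the paper: both compute the three coordinate contributions, establish the identity $[x_0,x]\symdif[x_0,y]=[x,y]$ via the divergence vertex of the two paths (the paper cites the argument in Lemma~\ref{lem:off-road-edges} for this), and combine with Lemma~\ref{lem:lamplighter-into-cube-prelim} to get the bounds $\tfrac12$ and $3$. Your explicit $\alpha,\beta,\gamma$ bookkeeping and worst-case analysis is just a slightly more transparent packaging of the paper's two displayed inequality chains.
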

\begin{proof} 
Fix two vertices $(A,x)$ and $(B,y)$ in $\la(T)$. Then
  \[
  d_{\square}\big( F(A,x),F(B,y)\big) = d_{\ham}\big( f(A,x),f(B,y)\big) +
  \bigabs{[x_0,x]\symdif[x_0,y]} + \bigabs{A\symdif B}\ .
  \]
  We first estimate the middle term. Let $p(x_0,x)=(x_0,x_1,\dots,x_m)$ and
  $p(x_0,y)=(y_0,y_1,\dots,y_n)$. As in the proof of
  Lemma~\ref{lem:off-road-edges}, if $i\leq\min(m,n)$ is maximal such
  that $x_j=y_j$ for $0\leq j\leq i$, then
  \[
  p(y,x)=(y_n,y_{n-1},\dots,y_{i+1},y_i=x_i,x_{i+1},\dots,x_m)\ .
  \]
  It follows at once that
  \[
  [x_0,x] \symdif [x_0,y] = [x,y]\ .
  \]
  Hence, using~\eqref{eq:metric-in=lamplighter-of-tree} and
  Lemma~\ref{lem:lamplighter-into-cube-prelim}, we deduce that
  \begin{align*}
    d_{\la(T)}\big( (A,x),(B,y)\big) &= 2\bigabs{[x,A\symdif B]\setminus[x,y]}
    + \bigabs{[x,y]} + \bigabs{A\symdif B}\\
    &\leq 2\cdot d_{\ham}\big( f(A,x),f(B,y)\big) + \bigabs{[x_0,x] \symdif
      [x_0,y]} + \bigabs{A\symdif B}\\
    &\leq 2\cdot d_{\square}\big( F(A,x),F(B,y) \big)
  \end{align*}
  and that
  \begin{align*}
    d_{\square}\big( F(A,x),F(B,y) \big) &\leq 
    2\bigabs{[x,A\symdif B]\setminus[x,y]} + 3\bigabs{[x,y]} +
    \bigabs{A\symdif B}\\
    &\leq 3\cdot d_{\la(T)}\big( (A,x),(B,y)\big)\ .
  \end{align*}
\end{proof}

\section{Lamplighter graph over the vertex-coalescence of two graphs}

\label{sec:amalgamation}

The procedure that consists of gluing two graphs at a common vertex
is known as \emph{vertex-coalescence }or, simply, \emph{coalescence
}of two graphs. Consider two \emph{pointed graphs }$G_1=(V_1,E_1,v_1)$
and $G_2=(V_2,E_2,v_2)$, \ie graphs $G_i$, $i=1,2$, with vertex set
$V_i$, edge set $E_i$ and a specified vertex $v_i\in V_i$.
We define \emph{the vertex-coalescence $G_1\coa G_2$ of $G_1$ and
  $G_2$ }by first taking the disjoint union of $G_1$ and $G_2$
followed by identifying the vertices $v_1$ and $v_2$. Formally,
$G_1\coa G_2$ has vertex set
\[
V=\big\{ (x,i):\, x\in V_i\setminus\{v_i\},\ i=1,2\big\} \cup \{v_0\}
\]
where $v_0\notin V_1\times\{1\}\cup V_2\times\{2\}$, and edge set
\begin{align*}
  E=\big\{ \big((x,i),(y,i)\big)&:\, x,y\in
  V_i\setminus\{v_i\},\ xy\in E_i,\ i=1,2\big\}\\
  &\cup \big\{ \big((x,i),v_0\big):\, x\in
  V_i\setminus\{v_i\},\ xv_i\in E_i,\ i=1,2\big\}\ .
\end{align*}
This formal definition is rather cumbersome. In practice, we shall
either assume after relabeling that $V_1\cap V_2=\{v_0\}$ and
$v_0=v_1=v_2$, in which case we can simply take $V=V_1\cup V_2$ and
$E=E_1\cup E_2$, or, particularly in the case of gluing several copies
of the same pointed graph together, we shall refer to vertices of the
original graph as being in the $k^{\text{th}}$ copy in the coalesced
graph for $k=1,2,\dots$. Note that if $G_1$ and $G_2$ are connected,
then so is $G_1\coa G_2$ with graph metric given by
\begin{equation}
  \label{eq:graph-metric-of-coalescence}
  d_{G_1\coa G_2}(u,v)=%
  \begin{cases}
    d_{G_i}(u,v) & \text{if }u,v\in G_i,\\
    d_{G_1}(u,v_0)+d_{G_2}(v_0,v) & \text{if } u\in G_1, v\in G_2\ .
  \end{cases}
\end{equation}

In the next lemma we record the relationship between the Travelling
Salesman Problem on the coalescence graph with the ones on its
components. The proof is elementary and left to the reader. 
	
\begin{lem}
  Let $G_1\coa G_2$ be the coalescence of two connected pointed graphs
  $G_1$ and $G_2$ at a common vertex $v_0$. Let
  $x,y\in G_1\coa G_2$ and $C\subset G_1\coa G_2$ with $C$ finite.

  \vspace{2ex}

  \noindent
  If there exists $i\in\{1,2\}$ such that $x,y\in G_i$ and
  $C\subset G_i$, then
  \begin{equation}
    \label{eq:15}
    \tsp_{G_1\coa G_2}(x,C,y)=\tsp_{G_i}(x,C,y)\ .
  \end{equation}

  \vspace{2ex}

  \noindent
  If $x\in G_1$, $y\in G_2$ and $C=C_1\cup C_2$ with $C_i\subset G_i$
  for $i=1,2$, then
  \begin{equation}
    \label{eq:16}
    \tsp_{G_1\coa G_2}(x,C,y) = \tsp_{G_1}(x,C_1,v_0) +
    \tsp_{G_2}(v_0,C_2,y)\ .
  \end{equation}

  \vspace{2ex}

  \noindent
  If $x\in G_1$, $y\in G_1$ and $C\cap G_2\neq \emptyset$, then
  \begin{multline}
    \label{eq:17}
    \tsp_{G_1\coa G_2}(x,C,y)\\
    = \min \big\{\tsp_{G_1}(x,C',v_0) +
    \tsp_{G_2}(v_0,C\cap V_2,v_0) + \tsp_{G_1}(v_0,C'',y)\big\}
  \end{multline}
  where the minimum is taken over all  sets
  $C', C''\subset V_1$ with $C'\cup C''=C\cap V_1$.
\end{lem}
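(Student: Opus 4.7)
The plan is to exploit the fact that $v_0$ is the unique cut vertex separating $G_1\setminus\{v_0\}$ from $G_2\setminus\{v_0\}$ in $G_1\coa G_2$: any walk in $G_1\coa G_2$ that touches both sides must pass through $v_0$ every time it switches sides. Concretely, given any walk $w=(w_0,\dots,w_N)$ in $G_1\coa G_2$, I record the times $0\leq t_1<t_2<\dots<t_r\leq N$ at which $w_{t_j}=v_0$ and decompose $w$ into the resulting segments; each segment lies entirely in one of the two sides, and segments on opposite sides of $v_0$ must alternate with a visit to $v_0$ in between. This ``cut decomposition'' is the only real tool needed for all three formulas.

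For \eqref{eq:15}, where $x,y,C\subset G_i$, one direction is trivial since any walk witnessing $\tsp_{G_i}(x,C,y)$ is also a walk in $G_1\coa G_2$. For the converse, I take an optimal walk $w$ in $G_1\coa G_2$ and observe that every maximal subwalk of $w$ lying in $G_{3-i}$ starts and ends at $v_0$ and visits no vertex of $C$; replacing each such excursion by simply staying at $v_0$ produces a walk of no greater length that lies entirely in $G_i$ and still visits $C$, proving $\tsp_{G_i}(x,C,y)\leq\tsp_{G_1\coa G_2}(x,C,y)$.

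For \eqref{eq:16}, where $x\in G_1$ and $y\in G_2$, the upper bound follows from the concatenation of any optimal walk in $G_1$ from $x$ to $v_0$ visiting $C_1$ with any optimal walk in $G_2$ from $v_0$ to $y$ visiting $C_2$. For the lower bound, I apply the cut decomposition to an optimal walk $w$: reading off all $G_1$-segments in order and gluing them at $v_0$ yields a walk in $G_1$ from $x$ to $v_0$ that visits every $G_1$-vertex hit by $w$, in particular $C_1$; analogously for the $G_2$-segments, which glue to a walk from $v_0$ to $y$ visiting $C_2$. Since the total length of the two glued walks equals the length of $w$, the desired lower bound follows.

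For \eqref{eq:17}, where $x,y\in G_1$ but $C\cap G_2\neq\emptyset$, the upper bound is again direct: for any splitting $C'\cup C''=C\cap V_1$, concatenate an optimal $G_1$-walk from $x$ to $v_0$ through $C'$, an optimal $G_2$-walk from $v_0$ to $v_0$ through $C\cap V_2$, and an optimal $G_1$-walk from $v_0$ to $y$ through $C''$; since $C\cap V_2\neq\emptyset$ forces the walk through $v_0$ anyway, this is admissible. For the lower bound, take an optimal walk $w$ and let $t_1$ be its first visit and $t_r$ its last visit to $v_0$. Define $C'$ to be the set of vertices of $C\cap V_1$ visited during $(w_0,\dots,w_{t_1})$ together with the $G_1$-loops based at $v_0$ occurring between $t_1$ and $t_r$, and $C''=(C\cap V_1)\setminus C'$. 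The cut decomposition shows the $G_1$-portion up to time $t_r$ is a walk in $G_1$ from $x$ to $v_0$ visiting $C'$, the $G_2$-portions constitute a walk in $G_2$ from $v_0$ to $v_0$ visiting $C\cap V_2$, and the final $G_1$-segment is a walk from $v_0$ to $y$ visiting $C''$; summing lengths gives the required inequality, with the minimum over $(C',C'')$ accounting for the freedom in where each vertex of $C\cap V_1$ is traversed. The only mildly subtle step is the bookkeeping in \eqref{eq:17} to ensure the split $C'\cup C''=C\cap V_1$ is consistent with whichever decomposition achieves the minimum; the cut decomposition handles this cleanly.
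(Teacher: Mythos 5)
Your proof is correct. The paper explicitly leaves this lemma to the reader (``The proof is elementary and left to the reader''), and your cut-vertex decomposition at $v_0$ --- splitting an optimal walk at its visits to $v_0$, gluing the same-side segments, and in \eqref{eq:17} letting the split $C'\cup C''$ be dictated by whether a vertex of $C\cap V_1$ is hit before or after the last visit to $v_0$ --- is exactly the intended elementary argument, with all three cases handled soundly.
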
					 		

The purpose of the next theorem is to establish a metric connection
between the lamplighter graph over the coalescence of two graphs with
the lamplighter graphs over its components. To do this, we need to
make use of clover graphs. Given $n\in \bn$ and a pointed graph
$G=(V,E,v_0)$, \emph{the clover graph }$\clo(G,n)$ is obtained by
coalescing $n$ copies of $G$ at $v_0$ in an obvious inductive
fashion.

\begin{thm}
  \label{thm:coalescence-lemma}
  Let $G_1\coa G_2$ be the coalescence of two finite, connected
  pointed graphs $G_1$ and $G_2$ at a common vertex $v_0$. Then there
  exists a map
  \[
  f\colon\la(G_1\coa G_2)\to\la(G_1)\cprod\la(G_2)\cprod%
  \clo\big(G_1,2^{\abs{G_2}}\big)\cprod\clo%
  \big(G_2,2^{\abs{G_1}}\big)
  \]
  such that
  \begin{equation}
    \label{eq:coalescence-lemma}
    d_{\la(G_1\coa G_2)}(u,v)\leq d_{\square}\big(f(u),f(v)\big)\leq
    2\cdot d_{\la (G_1\coa G_2)}(u,v)
  \end{equation}
  for all $u,v\in\la(G_1\coa G_2)$.
\end{thm}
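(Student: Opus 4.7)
My plan is to construct $f$ by sending $(A,x)\in \la(G_1\coa G_2)$ to a $4$-tuple that separately tracks the lamp configuration on each side of the coalescence together with the lamplighter's position, and that uses the clover coordinates to charge a ``detour through $v_0$'' whenever the two endpoints of a comparison live on the same side but the lamp configurations differ on the opposite side. Writing $A_i=A\cap V(G_i)$ with a fixed convention assigning $v_0$ to, say, $G_1$, the four coordinates of $f(A,x)$ will be: in $\la(G_1)$ the pair $(A_1,x)$ if $x\in V(G_1)$ and $(A_1,v_0)$ otherwise; in $\la(G_2)$ the symmetric pair; in $\clo(G_1,2^{\abs{G_2}})$ the vertex $x$ sitting in the copy of $G_1$ labelled by the subset $A_2\subset V(G_2)$ when $x\in V(G_1)$, and the central vertex $v_0$ otherwise; and in $\clo(G_2,2^{\abs{G_1}})$ the analogous coordinate with the roles of $G_1$ and $G_2$ swapped.

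For the upper bound I verify the $2$-Lipschitz condition on edges of $\la(G_1\coa G_2)$. A vertical move toggles a single lamp, altering exactly one of $A_1,A_2$ without moving the lamplighter; the corresponding $\la$-coordinate changes by $1$, while the clover on the opposite side only changes the \emph{label} of the copy it lives in but the underlying clover-vertex remains the central $v_0$, which is shared across all copies. A horizontal move along any edge $xx'$ of $G_1\coa G_2$---including edges incident to $v_0$---changes precisely the $\la$-coordinate of the side containing $xx'$ together with the clover-coordinate on that same side, each by exactly $1$, while the opposite $\la$-coordinate stays at $v_0$ and the opposite clover-coordinate stays at its center. In every case the total displacement is at most~$2$.

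For the lower bound I case-split on the positions of $x$ and $y$. When $x\in V(G_1)$ and $y\in V(G_2)$ (or vice versa), formula~\eqref{eq:16} writes $d_{\la(G_1\coa G_2)}((A,x),(B,y))$ as the sum of the contributions from the two $\la$-coordinates of $f$, while the two clover coordinates contribute the additional non-negative quantities $d_{G_1}(x,v_0)$ and $d_{G_2}(v_0,y)$, yielding the required inequality immediately. When $x,y$ lie on the same side (say $V(G_1)$) and $A_2=B_2$, formula~\eqref{eq:15} identifies $d_{\la(G_1\coa G_2)}$ with the first coordinate of $d_\square$ alone, and the remaining three coordinates contribute non-negative quantities.

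The main obstacle is the remaining case $x,y\in V(G_1)$ with $A_2\neq B_2$ (symmetrically for $G_2$), where $x$ and $y$ live in different copies of $G_1$ inside the third-coordinate clover, so that coordinate contributes exactly $d_{G_1}(x,v_0)+d_{G_1}(v_0,y)$. Starting from formula~\eqref{eq:17} I will apply the two extreme splits $C'=\emptyset,\,C''=A_1\symdif B_1$ and $C'=A_1\symdif B_1,\,C''=\emptyset$, and use the triangle-type bounds $\tsp_{G_1}(v_0,A_1\symdif B_1,y)\leq d_{G_1}(v_0,x)+\tsp_{G_1}(x,A_1\symdif B_1,y)$ and $\tsp_{G_1}(x,A_1\symdif B_1,v_0)\leq \tsp_{G_1}(x,A_1\symdif B_1,y)+d_{G_1}(y,v_0)$, obtaining two upper bounds on $\tsp_{G_1\coa G_2}(x,A\symdif B,y)$ that differ only by an extra $2d_{G_1}(x,v_0)$ vs.\ $2d_{G_1}(y,v_0)$. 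Combining them via $2\min\{a,b\}\leq a+b$ gives
\[\tsp_{G_1\coa G_2}(x,A\symdif B,y)\leq \tsp_{G_1}(x,A_1\symdif B_1,y)+\tsp_{G_2}(v_0,A_2\symdif B_2,v_0)+d_{G_1}(x,v_0)+d_{G_1}(v_0,y),\]
and adding $\abs{A\symdif B}=\abs{A_1\symdif B_1}+\abs{A_2\symdif B_2}$ to both sides matches $d_\square(f(A,x),f(B,y))$ term by term, completing the lower bound. The upper bound $d_\square\leq 2 d_{\la(G_1\coa G_2)}$ in every case then follows at once from the $2$-Lipschitz property established earlier, yielding the distortion~$2$ claimed in~\eqref{eq:coalescence-lemma}.
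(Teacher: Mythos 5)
Your proposal is correct and uses the same embedding as the paper (same four coordinates, same clover labelling by the opposite-side lamp configuration), with the same case analysis and the same min-over-extreme-splits of~\eqref{eq:17} for the hard lower-bound case. The only difference is cosmetic but pleasant: you get the $2$-Lipschitz upper bound once and for all by checking single edges of $\la(G_1\coa G_2)$, whereas the paper re-derives it inside each case via exact distance formulas.
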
		
\begin{proof}
  Observe that $2^{\abs{G_2}}$ is the number of subsets of $G_2$, and
  thus we can index the $2^{\abs{G_2}}$ copies of $G_1$ in
  $\clo\big(G_1,2^{\abs{G_2}}\big)$ by the collection of all subsets of
  $G_2$. For $x\in G_1$ and $S\subset G_2$ we denote by $\iota_S(x)$
  the element $x$ considered in the copy of $G_1$ in
  $\clo\big(G_1,2^{\abs{G_2}}\big)$ that is indexed by $S$. We proceed
  in a similar way for $\clo(G_2,2^{\abs{G_1}})$. We define the
  function
  \[
  f\colon\la(G_1\coa G_2)\to\la(G_1)\cprod\la(G_2)\cprod%
  \clo\big(G_1,2^{\abs{G_2}}\big)\cprod\clo%
  \big(G_2,2^{\abs{G_1}}\big)
  \]
  as follows. Given a vertex $(A,x)$ of $\la(G_1\coa G_2)$, we let
  $A_i=A\cap G_i$ for $i=1,2$, and set
  \begin{equation}
    f(A,x)=%
    \begin{cases}
      \big((A_1,x), (A_2,v_0), \iota_{A_2}(x), v_0\big) & \text{if }
      x\in G_1,\\
      \big((A_1,v_0), (A_2,x), v_0, \iota_{A_1}(x)\big) & \text{if }
      x\in G_2\ .
    \end{cases}
  \end{equation}
  To establish~\eqref{eq:coalescence-lemma}, we fix vertices $(A,x)$
  and $(B,y)$ in $\la (G_1\coa G_2)$, we let $A_i=A\cap G_i$ and
  $B_i=B\cap G_i$ for $i=1,2$, and consider several cases. To make the
  notation less crowded, we will at times drop subscripts in the graph
  distance.
  \begin{mylist}{}
  \item[\underline{Case 1:}] $x,y\in G_1$ and $A\symdif B\subset G_1$.
    In this case we have $A\symdif B=A_1\symdif B_1$ and $A_2=B_2$. It
    follows that
    \begin{align*}
      &d_{\square}\big(f(A,x),f(B,y)\big) & \\
      &= d\big((A_1,x),(B_1,y)\big) +
      d\big((A_2,v_0),(B_2,v_0)\big)+
      d\big(\iota_{A_2}(x),\iota_{B_2}(y)\big)+d(v_0,v_0)\\
      &=\tsp_{G_1}(x,A_1\symdif B_1,y)+
      \abs{A_1\symdif B_1}+ 
      \tsp_{G_2}(v_0,A_2\symdif B_2,v_0)+\abs{A_2\symdif B_2}\\
      & \qquad + d\big(\iota_{A_2}(x),\iota_{B_2}(y)\big) &
      \text{\makebox[0pt][r]{(by~\eqref{eq:graph-metric-in-lamplighter})}}\\
      &=\tsp_{G_1\coa G_2}(x,A\symdif B,y)+\abs{A\symdif B}+d_{G_1}(x,y)
      &
      \text{\makebox[0pt][r]{(by~\eqref{eq:15}
          and~\eqref{eq:graph-metric-of-coalescence})}}\\
      &=d_{\la (G_1\coa G_2)}\big((A,x),(B,y)\big)+d_{G_1\coa G_2}(x,y) &
      \text{\makebox[0pt][r]{(by~\eqref{eq:graph-metric-in-lamplighter}
          and~\eqref{eq:graph-metric-of-coalescence})}}
    \end{align*}
    which implies~\eqref{eq:coalescence-lemma} since
    \[
    0\leq d_{G_1\coa G_2}(x,y)\leq \tsp_{G_1\coa G_2} (x,A\symdif B,y)
    \leq d_{\la (G_1\coa G_2)}\big((A,x),(B,y)\big)\ .
    \]
  \item[\underline{Case 2:}]
    $x\in G_1$ and $y\in G_2$. Then,
    \begin{align*}
      &d_\square\big(f(A,x),f(B,y)\big) &\\
      &=d\big((A_1,x),(B_1,v_0)\big)+d\big((A_2,v_0),(B_2,y)\big)+
      d\big(\iota_{A_2}(x),v_0\big)+d\big(v_0,\iota_{B_1}(y)\big)\\
      &=\tsp_{G_1}(x,A_1\symdif B_1,v_0)+\abs{A_1\symdif B_1}+
      \tsp_{G_2}(v_0,A_2\symdif B_2,y)+\abs{A_2\symdif B_2}\\ 
      & \qquad +d_{G_1}(x,v_0)+d_{G_2}(v_0,y) &
      \text{\makebox[0pt][r]{(by~\eqref{eq:graph-metric-in-lamplighter}
          and~\eqref{eq:graph-metric-of-coalescence})}}\\
      &=\tsp_{G_1\coa G_2}(x,A\symdif B,y)+\abs{A\symdif B}+
      d_{G_1}(x,v_0)+d_{G_2}(v_0,y) &
      \text{\makebox[0pt][r]{(by~\eqref{eq:16})}}\\
      &=d_{\la (G_1\coa G_2)}\big((A,x),(B,y)\big)+
      d_{G_1\coa G_2}(x,y) &
      \text{\makebox[0pt][r]{(by~\eqref{eq:graph-metric-in-lamplighter}
          and~\eqref{eq:graph-metric-of-coalescence})}}
    \end{align*}
    and we are again done as in the previous case.
  \item[\underline{Case 3:}]
    $x,y\in G_1$ and $A\symdif B\cap G_2\neq\emptyset$. Then
    $A_2\symdif B_2\neq\emptyset$, and thus $A_2\neq B_2$. Therefore,
    \begin{align*}
      &d_{\square}\big(f(A,x),f(B,y)\big) & \\
      &=d\big((A_1,x),(B_1,y)\big)+d\big((A_2,v_0),(B_2,v_0)\big)+
      d\big(\iota_{A_2}(x),\iota_{B_2}(y)\big)+d(v_0,v_0)\\
      &=\tsp_{G_1}(x,A_1\symdif B_1,y)+\abs{A_1\symdif B_1}+
      \tsp_{G_2}(v_0,A_2\symdif B_2,v_0)+\abs{A_2\symdif B_2}\\
      & \qquad +d_{G_1}(x,v_0)+d_{G_1}(v_0,y) &
      \text{\makebox[0pt][r]{(by~\eqref{eq:graph-metric-in-lamplighter}
          and~\eqref{eq:graph-metric-of-coalescence})}}\\
      &=\tsp_{G_1}(x,A_1\symdif B_1,y)+
      \tsp_{G_2}(v_0,A_2\symdif B_2,v_0) + d_{G_1}(x,v_0) +
      d_{G_1}(v_0,y)\\
      & \qquad + \abs{A\symdif B}\ .
    \end{align*}
    For any decomposition $C\cup D$ of $A_1\symdif B_1$ one has 
    \begin{align*}
      \tsp_{G_1}(x,A_1\symdif B_1,y) &\leq \tsp_{G_1}(x,C,v_0)+
      \tsp_{G_1}(v_0,D,y)\\
      d_{G_1}(x,v_0) &\leq \tsp_{G_1}(x,C,v_0)\\
      d_{G_1}(v_0,y) &\leq \tsp_{G_1}(v_0,D,y)\ ,
    \end{align*}
    and hence
    \begin{align*}
      &d_{\square}\big(f(A,x),f(B,y)\big) &\\
      &\leq 2\cdot \tsp_{G_1}(x,C,v_0)+ 
      \tsp_{G_2}(v_0,A_2\symdif B_2,v_0)+ 2\cdot\tsp_{G_1}(v_0,D,y)
      +\abs{A\symdif B}\\
      &\leq 2\big(\tsp_{G_1}(x,C,v_0)+
      \tsp_{G_2}(v_0,A_2\symdif B_2,v_0)+\tsp_{G_1}(v_0,D,y)\big)
      +\abs{A\symdif B}\\
      &= 2\cdot\tsp_{G_1\coa G_2}(x,A\symdif B,y)+
      \abs{A\symdif B} &
      \text{\makebox[0pt][r]{(for some choice of $C,D$
          by~\eqref{eq:17})}}\\
      &\leq 2\cdot d_{\la (G_1\coa G_2)}\big((A,x),(B,y)\big) &
      \text{\makebox[0pt][r]{(by~\eqref{eq:graph-metric-in-lamplighter})}}
    \end{align*}
    For the lower bound, assume without loss of generality that
    $d_{G_1}(x,v_0)\geq d_{G_1}(v_0,y)$. Since
    $\tsp_{G_1}(x,A_1\symdif B_1,y)+d_{G_1}(v_0,y)\geq
    \tsp_{G_1}(x,A_1\symdif B_1,v_0)$, it follows that
    \begin{align*}
      &d_{\square}\big(f(A,x),f(B,y)\big) &\\
      &\geq \tsp_{G_1}(x,A_1\symdif B_1,v_0)+
      \tsp_{G_2}(v_0,A_2\symdif B_2,v_0)+d_{G_1}(x,v_0)+
      \abs{A\symdif B}\\
      &\geq\tsp_{G_1}(x,A_1\symdif B_1,v_0)+
      \tsp_{G_2}(v_0,A_2\symdif B_2,v_0)+
      \tsp_{G_1}(v_0,\emptyset,y)\\
      & \qquad + \abs{A\symdif B}\\
      &\geq \tsp_{G_1\coa G_2} (x,A\symdif B,y) + \abs{A\symdif B} 
      & \text{\makebox[0pt][r]{(by~\eqref{eq:17})}}\\
      &= d_{\la (G_1\coa G_2)}\big((A,x),(B,y)\big)
      & \text{\makebox[0pt][r]{(by~\eqref{eq:graph-metric-in-lamplighter})}}
    \end{align*}
  \end{mylist}
\end{proof}
We will now illustrate the utility of
Theorem~\ref{thm:coalescence-lemma} with two applications. The first
result (Proposition~\ref{prop:star-characterization}) is concerned with
embeddings of lamplighter graphs over star graphs into
non-superreflexive Banach spaces. Given $k,n\in\bn$, we define the
\emph{star graph $\st_{n,k}$ }to be the clover graph $\clo(\pa_k,n)$
obtained by coalescing $n$ copies of a path of length~$k$ at an
endvertex (see Figure~\ref{fig:star-rose}).
\begin{lem}
  \label{lem:star-lemma}
  Let $k,n\in\bn$ and let $(E,\norm{\cdot})$ be an $n$-dimensional
  Banach space. Then $\st_{n,k}$ bi-Lipschitzly embeds into~$E$ with
  distortion at most~$2$.
\end{lem}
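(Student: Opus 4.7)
The plan is to embed the star $\st_{n,k}$ into $E$ by sending the central vertex to the origin and laying each of the $n$ branches along a distinct coordinate direction determined by a well-chosen basis of $E$. Since all distances in $\st_{n,k}$ between vertices on the same branch, or from a branch vertex to the center, are simply geodesics passing through the center, such a branch-by-branch embedding will be isometric on these pairs; the only non-trivial pairs are vertices on two different branches, and for those pairs one needs a direction system in $E$ that ensures a good lower bound.

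The key tool I would invoke is an \emph{Auerbach basis} of $E$: in every $n$-dimensional Banach space there exist vectors $e_1,\dots,e_n\in E$ and functionals $e_1^*,\dots,e_n^*\in E^*$ with $\norm{e_i}=\norm{e_i^*}=1$ and $e_i^*(e_j)=\delta_{ij}$. Labeling the central vertex of $\st_{n,k}$ by $v_0$ and the $j$-th vertex along the $i$-th branch (for $1\leq i\leq n$, $1\leq j\leq k$) by $v_{i,j}$, I would define
\[
f\colon \st_{n,k}\to E, \qquad f(v_0)=0, \qquad f(v_{i,j})=j\,e_i.
\]

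The verification splits into three routine cases. For the center and a branch vertex, $\norm{f(v_0)-f(v_{i,j})}=j=d(v_0,v_{i,j})$. For two vertices on the same branch, $\norm{je_i-j'e_i}=|j-j'|=d(v_{i,j},v_{i,j'})$. For two vertices on different branches $i\neq i'$, the triangle inequality gives $\norm{je_i-j'e_{i'}}\leq j+j'=d(v_{i,j},v_{i',j'})$, while the biorthogonal functionals give
\[
\norm{je_i-j'e_{i'}}\geq |e_i^*(je_i-j'e_{i'})|=j
\quad\text{and analogously}\quad
\norm{je_i-j'e_{i'}}\geq j',
\]
hence $\norm{je_i-j'e_{i'}}\geq \max(j,j')\geq (j+j')/2=\tfrac12 d(v_{i,j},v_{i',j'})$.

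Combining the three cases yields $\tfrac12\, d(u,v)\leq \norm{f(u)-f(v)}\leq d(u,v)$ for all $u,v\in \st_{n,k}$, so $f$ is a bi-Lipschitz embedding with distortion at most $2$. There is no real obstacle here: the argument reduces to identifying the correct finite-dimensional structure, namely an Auerbach basis, which precisely supplies the norm-one biorthogonal functionals needed to separate different branches.
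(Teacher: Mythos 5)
Your proof is correct and is essentially identical to the paper's: both use an Auerbach basis, send a vertex at distance $j$ from the center on the $i$-th branch to $je_i$, get the upper bound from the triangle inequality, and get the lower bound for vertices on distinct branches via the biorthogonal functionals and $\max(j,j')\geq (j+j')/2$.
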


\begin{proof}
  Let $(e_i)_{i=1}^n$ be an Auerbach basis for $E$. By this we mean
  that $\norm{e_i}=1$ for $i=1,\ldots, n$ and that there are
  functionals $(e^*_i)_{i=1}^n$ in the dual of $E$ which are also
  normalized and such that $e^*_i(e_j)=\delta_{ij}$ for all
  $i,j=1,\ldots,n$. Define $f\colon \st_{n,k}\to E$ by
  $f(x)=d_{\pa_k}(v_0,x)e_i$ if $x$ belongs to the $i^{\text{th}}$
  copy of $\pa_k$ in $\st_{n,k}$. Here $v_0$ is the endvertex of $\pa_k$
  at which the $n$ copies of $\pa_k$ are coalesced. If, for some $i$,
  both $x$ and $y$ belong to the $i^{\text{th}}$ copy of $\pa_k$ in
  $\st_{n,k}$, then
  \begin{align*}
    \norm{f(x)-f(y)} &= \abs{d_{\pa_k}(v_0,x)-d_{\pa_k}(v_0,y)} \cdot
    \norm{e_i}\\
    &=d_{\pa_k}(x,y)=d_{\st_{n,k}}(x,y)\ .
  \end{align*}
  If, for some $i\neq j$, we have that $x$ belongs to the
  $i^{\text{th}}$ copy and $y$ to the $j^{\text{th}}$ copy of $\pa_k$ in
  $\st_{n,k}$, then
  \begin{align*}
    \norm{f(x)-f(y)}&=\norm{d_{\pa_k}(v_0,x)e_i-d_{\pa_k}(v_0,y)e_j}\\
    &\geq \max\{d_{\pa_k}(v_0,x),d_{\pa_k}(v_0,y)\}\\
    &\geq \tfrac12 \big(d_{\pa_k}(v_0,x)+d_{\pa_k}(v_0,y)\big)\\
    &= \tfrac12\cdot d_{\st_{n,k}}(x,y)\ .
  \end{align*}
  On the other hand, $f$ is clearly $1$-Lipschitz by the triangle
  inequality.
\end{proof}

The following lemma says that under certain conditions one can embed a
finite product of metric spaces into a Banach space if the metric
spaces are themselves embeddable in a particular fashion. Similar
arguments have already been used in metric geometry
(\cf~\cite{ostrovskii:14}*{Theorem~1.7}) and their proofs simply rely
on basic functional analytic principles. We provide a proof for the
convenience of the reader unfamiliar with those.

\begin{lem}
  \label{lem:product-lemma}
  Let $M_1,\dots,M_n$ be metric spaces, and let $Y$ be
  an infinite-dimensional Banach space. Assume that there exist
  positive real numbers $D_1,\dots, D_n$ such that for every
  $i=1,\dots,n$ and for every finite-codimensional subspace $Z$ of
  $Y$, there is a bi-Lipschitz embedding $\varf_{i,Z}$ of $M_i$ of
  distortion at most $D_i$ into a finite-dimensional subspace of
  $Z$. Then for every $\vare>0$, the product $M=\prod_{i=1}^nM_i$
  equipped with the $\ell_1$-metric bi-Lipschitzly embeds into $Y$
  with distortion at most $(2+\vare)n\max_{1\leq i\leq n}D_i$.
\end{lem}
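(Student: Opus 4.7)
The strategy is to build the embedding $\Phi\colon M\to Y$ as a linear combination $\Phi(x_1,\dots,x_n)=\sum_{i=1}^n \varphi_i(x_i)$, where the individual embeddings $\varphi_i$ of $M_i$ are chosen so that their images sit in a nearly basic-sequence-like configuration inside $Y$. After rescaling I may assume each $\varphi_i$ satisfies $d_{M_i}(x,y)\leq \|\varphi_i(x)-\varphi_i(y)\|\leq D_i\, d_{M_i}(x,y)$, and, by translating, that $\varphi_i(o_i)=0$ for a fixed basepoint $o_i\in M_i$. The upper Lipschitz estimate $\|\Phi(x)-\Phi(y)\|\leq (\max_i D_i)\,d_M(x,y)$ is then immediate from the triangle inequality.

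The key tool is the classical Mazur selection principle: for every finite-dimensional subspace $F\subset Y$ and every $\delta>0$, there is a finite-codimensional subspace $Z\subset Y$ such that $\|f+z\|\geq(1-\delta)\|f\|$ for all $f\in F$ and $z\in Z$ (proved by taking a fine net of the unit sphere of $F^*$, extending by Hahn-Banach, and letting $Z$ be the intersection of the kernels). I would use this iteratively to produce a nested chain $Y=Z_0\supset Z_1\supset\dots\supset Z_{n-1}$ of finite-codimensional subspaces of $Y$ and, at each stage, a bi-Lipschitz embedding $\varphi_i\colon M_i\to F_i\subset Z_{i-1}$ of distortion at most $D_i$ with $F_i$ finite-dimensional (which is possible by hypothesis since each $Z_i$ is still infinite-dimensional). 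The essential point is that Mazur's lemma must be applied at stage $i$ to the \emph{entire} accumulated span $F_1+\dots+F_i$ (and then intersected with $Z_{i-1}$ to obtain $Z_i$), not merely to $F_i$: this ensures that each partial sum, and not just each individual summand, is controlled.

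For the lower bound, fix $x,y\in M$ and set $u_i=\varphi_i(x_i)-\varphi_i(y_i)\in F_i$ and $v=\sum_{i=1}^n u_i$. For any $m$, the tail $u_{m+1}+\dots+u_n$ lies in $Z_m$ (because $F_j\subset Z_{j-1}\subset Z_m$ for $j>m$), while the head $u_1+\dots+u_m$ lies in $F_1+\dots+F_m$. The defining property of $Z_m$ therefore forces $\|v\|\geq(1-\delta)\|u_1+\dots+u_m\|$, so the partial-sum maps are bounded by $K=1/(1-\delta)$. A telescoping estimate yields $\|u_m\|\leq 2K\|v\|$ for every $m$, hence $d_M(x,y)\leq \sum_i\|u_i\|\leq 2Kn\|v\|$. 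Choosing $\delta>0$ so small that $2K\leq 2+\varepsilon$ produces the claimed distortion bound $(2+\varepsilon)n\max_i D_i$. The main subtlety is precisely that ``accumulated'' application of Mazur's lemma: a naive iteration that only controls each $u_i$ against the tail beyond it telescopes to a constant of order $2^n$, and obtaining the linear dependence on $n$ requires insisting at every stage on near-orthogonality of $Z_i$ to the full span $F_1+\dots+F_i$.
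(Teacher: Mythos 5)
Your proposal is correct and follows essentially the same route as the paper: the same Mazur-type lemma, the same recursive choice of finite-codimensional subspaces nearly orthogonal to the accumulated span $F_1+\dots+F_i$, and the same telescoping lower bound giving the factor $(2+\vare)n$. The only (immaterial) difference is that you enforce nesting of the $Z_i$ so the whole tail sits in $Z_m$ and a single factor $(1-\delta)$ suffices, whereas the paper peels off one summand at a time and absorbs $(1-\delta)^{-n}$ into $2+\vare$.
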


\begin{proof}
  We begin with a basic result from the geometry of Banach
  spaces. Given $\delta>0$ and a finite-dimensional subspace $E$ of
  $Y$, there is a finite-codimensional subspace $Z$ of $Y$ such that
  $\norm{x+z}\geq (1-\delta)\norm{x}$ for all $x\in E$ and
  $z\in Z$. Indeed, choose a $\delta$-net $x_1,\dots,x_K$ in the unit
  sphere of $E$ together with norming functionals $x_1^*,\dots,x_K^*$
  in $Y^*$. Set $Z=\bigcap _{i=1}^K \ker x^*_i$. Given $x\in E$ and
  $z\in Z$, assuming as we may that $\norm{x}=1$, choose
  $i\in\{1,\dots,K\}$ such that $\norm{x-x_i}\leq\delta$. Then we have
  \[
  \norm{x+z}\geq \norm{x_i+z}-\delta\geq
  \abs{x^*_i(x_i+z)}-\delta=1-\delta\ ,
  \]
  as required.
  
  Let us now turn to the statement of the lemma. Firstly, after
  scaling, we may assume that for every $i=1,\dots,n$ and for every
  finite-codimensional subspace $Z$ of $Y$ we have
  \[
  d_{M_i}(u,v)\leq \norm{\varf_{i,Z}(u)-\varf_{i,Z}(v)}
  \leq D_i\cdot d_{M_i}(u,v) \qquad \text{for all }u,v\in M_i\ .
  \]
  Fix $\vare>0$ and choose $\delta>0$ satisfying
  $2(1-\delta)^{-n}<2+\vare$. We will recursively construct
  finite-codimensional subspaces $Z_1,\dots,Z_n$ of $Y$ together with
  finite-dimensional subspaces $E_i$ of $Z_i$ as follows. At the
  $j^{\text{th}}$ step, having chosen $E_i\subset Z_i$ for
  $1\leq i<j$, we choose a finite-codimensional subspace $Z_j$ of $Y$
  such that $\norm{x+z}\geq(1-\delta)\norm{x}$ for all
  $x\in E_1+\dots+E_{j-1}$ and for all $z\in Z_j$. We then choose a
  finite-dimensinal subspace $E_j$ of $Z_j$ containing
  $\varf_{j,Z_j}(M_j)$. This completes the recursive construction,
  which has the following consequence. Given $x_i\in E_i$ for
  $i=1,\dots,n$, we have
  \[
  (1-\delta)^{n-m}\cdot \Bignorm{\sum_{i=1}^m x_i}\leq
  \Bignorm{\sum_{i=1}^n x_i}\ .
  \]
  for each $m=1,\dots,n$. It follows by the triangle-inequality and
  by the choice of $\delta$ that
  \begin{equation}
    \label{eq:pseudo-orthogonal}
    \max _{1\leq m\leq n} \norm{x_m} \leq (2+\vare)
    \Bignorm{\sum_{i=1}^n x_i}\ .
  \end{equation}
  We now define $\varf\colon M\to Y$ by
  $\varf(\vu)=\sum_{i=1}^n \varf_{i,Z_i} (u_i)$ for
  $\vu=(u_1,\dots,u_n)$ in the product space $M=\prod_{i=1}^nM_i$. We
  claim that $\varf$ is bi-Lipschitz with distortion at most
  $(2+\vare)Dn$ where $D=\max_{1\leq i\leq n}D_i$. Let us fix
  $\vu=(u_i)_{i=1}^n$ and $\vv=(v_i)_{i=1}^n$ in $M$. On the one hand,
  the triangle-inequality yields
  \[
  \norm{\varf(\vu)-\varf(\vv)} \leq \sum_{i=1}^n
  \norm{\varf_{i,Z_i} (u_i)-\varf_{i,Z_i} (v_i)} \leq
  \sum_{i=1}^n D_i\cdot d_{M_i}(u_i,v_i) \leq D\cdot d_M(\vu,\vv)\ .
  \]
  On the other hand, using~\eqref{eq:pseudo-orthogonal} we obtain the
  following lower bound.
  \begin{align*}
    (2+\vare)\cdot \norm{\varf(\vu)-\varf(\vv)} &\geq \max_{1\leq i\leq n}
  \norm{\varf_{i,Z_i} (u_i)-\varf_{i,Z_i} (v_i)}\\
  &\geq \max_{1\leq i\leq n} d_{M_i}(u_i,v_i) \geq \tfrac{1}{n}\cdot
  d_M(\vu,\vv)\ .
  \end{align*}
  Thus, $\varf$ has distortion at most $(2+\vare)Dn$, as claimed.
\end{proof}
Using Theorem~\ref{thm:coalescence-lemma} we can show that for fixed
$n\in\bn$, the sequence $(\la(\st_{n,k}))_{k\in\bn}$ of lamplighter
graphs equi-bi-Lipschiztly embeds into any non-superreflexive Banach
space.
\begin{prop}
  \label{prop:star-characterization}
  Let $Y$ be a non-superreflexive Banach space. For all $n\in\bn$,
  there exist $C(n)\in(0,\infty)$ and maps
  $f_{n,k}\colon \la(\st_{n,k})\to Y$ such that
  \begin{equation*}
    d_{\la(\st_{n,k})}(x,y)\leq \norm{f_{n,k}(x)-f_{n,k}(y)}_{Y}\leq
    C(n)\cdot d_{\la(\st_{n,k})}(x,y)
  \end{equation*}
  for all $k\in\bn$ and for all $x,y\in\la(\st_{n,k})$.
\end{prop}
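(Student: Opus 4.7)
The plan is to apply Theorem~\ref{thm:coalescence-lemma} iteratively to decompose the embedding problem, then re-assemble using Lemma~\ref{lem:product-lemma}. Viewing $\st_{n,k}=\pa_k\coa\st_{n-1,k}$ (with the pointed vertex of $\pa_k$ chosen to be an endpoint and that of each $\st_{m,k}$ chosen to be its center $v_0$), and observing that $\clo(\pa_k,N)=\st_{N,k}$ and $\clo(\st_{m,k},N)=\st_{mN,k}$, a straightforward induction on $n$ using Theorem~\ref{thm:coalescence-lemma} produces a bi-Lipschitz embedding
\[
\la(\st_{n,k}) \hookrightarrow \la(\pa_k)^{n}\cprod \st_{N_1,k}\cprod\cdots\cprod \st_{N_{2(n-1)},k}
\]
of distortion at most $2^{n-1}$, where each $N_j=N_j(n,k)\in\bn$ may depend on both $n$ and $k$ but the \emph{number} of factors depends only on $n$. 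The metric on the Cartesian product graph is the $\ell_1$-sum of coordinate distances, matching the setting of Lemma~\ref{lem:product-lemma}.

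Next we show that each factor embeds into $Y$ with distortion independent of $k$ and of the $N_j$. Since $Y$ is non-superreflexive, so is every finite-codimensional subspace $Z$ of $Y$ (otherwise $Y\cong Z\oplus F$ with $F$ finite-dimensional would be superreflexive). By Bourgain's metric characterization of superreflexivity, there is a constant $D=D(Y)\geq 1$ such that every binary tree $\bin_j$ bi-Lipschitzly embeds into every finite-codimensional subspace of $Y$ with distortion at most $D$. Combining this with Proposition~\ref{prop:path-into-trees} and a first application of Lemma~\ref{lem:product-lemma} to the product $\bin_{k+1}\cprod\bin_{k+1}$, we deduce that $\la(\pa_k)$ bi-Lipschitzly embeds into every finite-codimensional subspace of $Y$ with distortion bounded by a constant depending only on $Y$. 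On the other hand, by Lemma~\ref{lem:star-lemma} each $\st_{N,k}$ bi-Lipschitzly embeds into every $N$-dimensional Banach space with distortion at most $2$; since every finite-codimensional subspace of the infinite-dimensional $Y$ contains subspaces of every finite dimension, the same holds with target any finite-codimensional subspace of $Y$.

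Finally, a second application of Lemma~\ref{lem:product-lemma} (with any fixed $\vare>0$) to the $(3n-2)$-fold product appearing in the first step, using the uniform embeddings of the individual factors just constructed, yields a bi-Lipschitz embedding of that product into $Y$ with distortion bounded by a constant $C'(n)$ depending only on $n$ and $Y$. Composing with the coalescence embedding of distortion $2^{n-1}$ produces the desired map $f_{n,k}$ with distortion at most $C(n):=2^{n-1}C'(n)$, independent of $k$. The main technical point is the clean book-keeping of the recursion, which works precisely because the class of star graphs (together with $\la(\pa_k)$) is closed under the clovering operation appearing in Theorem~\ref{thm:coalescence-lemma}; the remaining arguments are standard invocations of Lemma~\ref{lem:product-lemma} and of Bourgain's characterization of non-superreflexivity via the equi-bi-Lipschitz embeddability of binary trees.
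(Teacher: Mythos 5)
Your proof is correct and follows essentially the same route as the paper's: iterate Theorem~\ref{thm:coalescence-lemma} using the closure of star graphs under clovering, then embed the resulting $(3n-2)$-fold Cartesian product into $Y$ via Bourgain's characterization together with Proposition~\ref{prop:path-into-trees}, Lemma~\ref{lem:star-lemma} and Lemma~\ref{lem:product-lemma}. The only difference is cosmetic: you peel off one arm at a time via $\st_{n,k}=\pa_k\coa\st_{n-1,k}$ (distortion $2^{n-1}$), whereas the paper reduces to $n=2^i$ and halves via $\st_{2^i,k}=\st_{2^{i-1},k}\coa\st_{2^{i-1},k}$ (distortion $2^i$); both yield the same factor count and a constant depending only on $n$.
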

\begin{proof}
  It is sufficient to prove the proposition for each
  $n\in\{2^i:\, i\in\bn\}$. Observe that
  $\st_{2^{i},k}=\st_{2^{i-1},k}\coa \st_{2^{i-1},k}$ and
  $\abs{\st_{2^{i-1},k}}=k\cdot 2^{i-1}+1$. Set
  $\alpha_{i,k}=2^{k\cdot 2^{i-1}+1}$. Applying
  Theorem~\ref{thm:coalescence-lemma}, $\la(\st_{2^{i},k})$
  bi-Lipschitzly embeds with distortion at most~$2$ into
  \[
  \la(\st_{2^{i-1},k})\cprod\la(\st_{2^{i-1},k})\cprod
  \clo(\st_{2^{i-1},k},\alpha_{i,k}) \cprod
  \clo(\st_{2^{i-1},k},\alpha_{i,k})\ .
  \]
  Now observe that $\clo(\st_{r,s},t)=\st_{rt,s}$ for any
  $r,s,t\in\bn$. If we apply Theorem~\ref{thm:coalescence-lemma}
  another $i-1$ times, we obtain that $\la(\st_{2^{i},k})$
  bi-Lipschitzly embeds with distortion at most~$2^i$ into the
  Cartesian product of 
  $4\cdot2^{i-1}+2\cdot2^{i-2}+\dots+2\cdot2+2=3\cdot2^i-2$ graphs
  each of which is either $\la(\pa_k)$ or a graph of the form
  $\st_{r,k}$ for some $r\in\bn$. Note that all these graphs admit
  bi-Lipschitz embeddings into every finite-codimensional subspace of
  any non-superreflexive Banach space. Indeed, by Bourgain's metric
  characterization of superreflexivity~\cite{bourgain:86}, for every
  $\vare>0$, every binary tree of finite height admits a
  bi-Lipschitz embedding into every finite-codimensional subspace of
  any non-superreflexive Banach space with distortion at most
  $1+\vare$. Therefore the conclusion follows by combining this result
  with Lemma~\ref{lem:star-lemma},
  Proposition~\ref{prop:path-into-trees}, and
  Lemma~\ref{lem:product-lemma}.  
\end{proof}

Let $\cyc_k$ denote the $k$-cycle, \ie the cycle of length~$k$ with
vertices $v_1,\dots,v_k$ and edges $v_{i-1}v_i$ for $i=1,\dots,k$,
where we set $v_0=v_k$. Given $k,n\in\bn$, we define the \emph{rose
  graph $\rose_{n,k}$ }to be the clover graph $\clo(\cyc_k,n)$
obtained by coalescing $n$ copies of $\cyc_k$ at $v_0$. Using
Theorem~\ref{thm:coalescence-lemma} together with the main result
from~\cite{RO2018}, we can show that for fixed $n\in\bn$ the sequence
$\big(\la(\rose_{n,k})\big)_{k\in\bn}$ of lamplighter graphs
equi-bi-Lipschiztly embeds into any non-superreflexive Banach
space. First we need to prove that $\rose_{n,k}$ can be well embedded
into Euclidean spaces.

\begin{lem}
  \label{lem:clover-cycles}
  Let $n\in\bn$. There exist maps
  $g_{n,k}\colon \rose_{n,k}\to \ell_2^{2n}$ such that
  \begin{equation*}
    \tfrac{1}{\sqrt{2}}\cdot d_{\rose_{n,k}}(x,y)\leq
    \norm{g_{n,k}(x)-g_{n,k}(y)}\leq \tfrac{\pi}{2}\cdot d_{\rose_{n,k}}(x,y)
  \end{equation*}
  for all $k\in\bn$ and for all $x,y\in\rose_{n,k}$.
\end{lem}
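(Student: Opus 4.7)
The plan is to embed each of the $n$ cycles forming $\rose_{n,k}$ into its own pairwise orthogonal 2-dimensional subspace of $\ell_2^{2n}$ via the canonical circular embedding, and then to extract the required bi-Lipschitz estimate from the single-cycle case combined with Pythagoras.

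First, I would build an embedding $\psi\colon \cyc_k \to \ell_2^2$ by placing the $k$ vertices at equally-spaced points on a circle of radius $k/(2\pi)$, translated so that $\psi(v_0)=0$. Using the identity $(\sin\alpha-\sin\beta)^2+(\cos\alpha-\cos\beta)^2 = 4\sin^2\bigl((\alpha-\beta)/2\bigr)$, the chord length between $\psi(v_i)$ and $\psi(v_j)$ simplifies to $\frac{k}{\pi}\sin(\pi d/k)$, where $d=d_{\cyc_k}(v_i,v_j)\in[0,k/2]$. Applying the elementary inequality $\frac{2t}{\pi}\leq \sin t\leq t$ on $[0,\pi/2]$ with $t=\pi d/k$, and then rescaling $\psi$ by $\pi/2$, yields the single-cycle estimate
\[
d_{\cyc_k}(x,y) \leq \norm{\psi(x)-\psi(y)} \leq \tfrac{\pi}{2}\cdot d_{\cyc_k}(x,y) \quad\text{for all } x,y\in\cyc_k,
\]
with $\psi(v_0)=0$ preserved by the rescaling.

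Next, I would decompose $\ell_2^{2n}$ as an orthogonal sum $V_1\oplus\cdots\oplus V_n$ of 2-dimensional subspaces and fix isometries $P_i\colon \ell_2^2 \to V_i$. Define $g_{n,k}\colon \rose_{n,k}\to\ell_2^{2n}$ by $g_{n,k}(v_0)=0$ and, for $x$ in the $i$-th copy of $\cyc_k$, $g_{n,k}(x)=P_i(\psi(x))$. The verification then splits into two cases according to whether $x,y$ lie in the same copy or in distinct copies of $\cyc_k$. The same-copy case is immediate from the single-cycle bound applied together with~\eqref{eq:graph-metric-of-coalescence}, which ensures that $d_{\rose_{n,k}}(x,y) = d_{\cyc_k}(x,y)$ and yields bounds even stronger than needed.

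For the cross-copy case $i\neq j$, Pythagoras gives $\norm{g_{n,k}(x)-g_{n,k}(y)}^2 = \norm{\psi(x)}^2+\norm{\psi(y)}^2$, and combining this with the single-cycle bound applied to $v_0$ and~\eqref{eq:graph-metric-of-coalescence}, which now reads $d_{\rose_{n,k}}(x,y) = d_{\cyc_k}(v_0,x) + d_{\cyc_k}(v_0,y)$, together with the numerical inequalities $\frac{1}{\sqrt{2}}(a+b)\leq \sqrt{a^2+b^2}\leq a+b$, recovers the stated bounds. No conceptual obstacle arises; the delicate point is the choice of scaling factor. The rescaling by $\pi/2$ is chosen precisely so that the $\sqrt{2}$ loss incurred by Pythagoras in the cross-copy case matches the lower constant $1/\sqrt{2}$ in the statement. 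A smaller scaling would break the lower bound in the cross-copy case, while a larger scaling would break the upper bound in the same-copy case, so this single parameter has to be tuned exactly right for the two regimes to balance.
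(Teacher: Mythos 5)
Your proposal is correct and follows essentially the same route as the paper: embed each petal into its own orthogonal copy of $\ell_2^2$ via the regular $k$-gon, then handle the same-copy and cross-copy cases separately, using $\tfrac{1}{\sqrt2}(a+b)\leq\sqrt{a^2+b^2}\leq a+b$ in the latter. The only difference is that you derive the single-cycle estimate $d_{\cyc_k}(x,y)\leq\norm{\psi(x)-\psi(y)}\leq\tfrac{\pi}{2}\,d_{\cyc_k}(x,y)$ by an explicit chord-length computation with $\tfrac{2t}{\pi}\leq\sin t\leq t$, whereas the paper cites the Linial--Magen optimality result; your version is self-contained and equally valid.
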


\begin{proof}
  It was proven in~\cite{linial-magen:00} that the natural embedding
  of the $k$-cycle onto the vertices of the regular $k$-gon in $\br^2$
  is optimal and has distortion exactly
  $\frac{k}{2}\sin\big(\frac{\pi}{k}\big) \leq
  \frac{\pi}{2}$. Therefore, there exist 
  maps $\varf_k\colon \cyc_k\to \ell_2^2$ with $\varf_k(v_0)=0$
  and such that
  \[
  d_{\cyc_k}(x,y)\leq \norm{\varf_k(x)-\varf_k(y)}_2\leq
  \tfrac{\pi}{2}\cdot d_{\cyc_k}(x,y)\ .
  \]
  Let $E_i=\ell_2^2$ for all $i\in \bn$ and define 
  \begin{eqnarray*}
    g_{n,k}\colon \rose_{n,k} &\to& (E_1\oplus\dots\oplus E_{i-1}\oplus
    E_i\oplus E_{i+1}\oplus\dots \oplus E_n)_{\ell_2}=\ell_2^{2n}\\
    x &\mapsto& (0,\dots,0,\varf_k(x),0,\dots,0) \qquad \text{if $x$
      belongs to the $i^{\text{th}}$ copy of $\cyc_k$.}
  \end{eqnarray*}
  Observe that if $x$ and $y$ belong to the same copy of $\cyc_k$ in
  $\rose_{n,k}$, then one has
  \[
  \norm{g_{n,k}(x)-g_{n,k}(y)}_2=\norm{\varf_k(x)-\varf_k(y)}_2\ .
  \]
  Otherwise, 
  \begin{align*}
    \norm{g_{n,k}(x)-g_{n,k}(y)}_2 &=
    \sqrt{\norm{\varf_k(x)}_2^2+\norm{\varf_k(y)}_2^2} \leq
    \norm{\varf_k(x)}_2+\norm{\varf_k(y)}_2\\ 
    &\leq
    \tfrac{\pi}{2}\cdot d_{\cyc_k}(x,v_0)+\tfrac{\pi}{2}\cdot d_{\cyc_k}(y,v_0)
    =\tfrac{\pi}{2}\cdot d_{\rose_{n,k}}(x,y)\ ,
  \end{align*}
  and
  \begin{align*}
    \norm{g_{n,k}(x)-g_{n,k}(y)}_2&\geq
    \tfrac{1}{\sqrt{2}}\big(\norm{\varf_k(x)}_2+\norm{\varf_k(y)}_2\big)\\
    &\geq \tfrac{1}{\sqrt{2}}\big(d_{\cyc_k}(x,v_0)+d_{\cyc_k}(y,v_0)\big)
    =\tfrac{1}{\sqrt{2}}\cdot d_{\rose_{n,k}}(x,y)\ .
  \end{align*}
\end{proof}

\begin{prop}
  \label{prop:rose-characterization}
  Let $Y$ be a non-superreflexive Banach space. For all $n\in\bn$
  there exist $D(n)\in(0,\infty)$ and maps
  $f_{n,k}\colon\la(\rose_{n,k})\to Y$ such that
  \begin{equation*}
    d_{\la(\rose_{n,k})}(x,y)\leq \norm{f_{n,k}(x)-f_{n,k}(y)}_{Y}\leq
    D(n)\cdot d_{\la(\rose_{n,k})}(x,y)
  \end{equation*}
  for all $k\in\bn$ and for all $x,y\in\la(\rose_{n,k})$.
\end{prop}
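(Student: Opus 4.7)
The approach mirrors the proof of Proposition~\ref{prop:star-characterization}, with the rose graph playing the role of the star graph and $\la(\cyc_k)$ playing the role of $\la(\pa_k)$. Since $\rose_{n,k}$ embeds isometrically into $\rose_{n',k}$ whenever $n\leq n'$, it suffices to treat $n=2^i$, $i\in\bn$. Observing that $\rose_{2^i,k}=\rose_{2^{i-1},k}\coa\rose_{2^{i-1},k}$ and that $\clo(\rose_{r,k},m)=\rose_{rm,k}$ for all $r,m\in\bn$, iterating Theorem~\ref{thm:coalescence-lemma} exactly $i$ times exhibits $\la(\rose_{2^i,k})$ as a bi-Lipschitz subspace, with distortion at most $2^i$, of a Cartesian product of $3\cdot 2^i-2$ factors, each of which is either $\la(\cyc_k)$ or of the form $\rose_{r,k}$ for some $r\in\bn$ that may depend on $k$.

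The next step is to show that each factor admits bi-Lipschitz embeddings of uniformly bounded distortion into every finite-codimensional subspace $Z$ of $Y$. For the $\la(\cyc_k)$ factors this is immediate from the main result of~\cite{RO2018}, together with the observation that every finite-codimensional subspace of $Y$ remains non-superreflexive (otherwise $Y$, being the direct sum of $Z$ and a finite-dimensional complement, would itself be superreflexive). For the $\rose_{r,k}$ factors, I would establish the following rose analog of Lemma~\ref{lem:star-lemma}: \emph{there is a universal constant $C$ such that for every $r,k\in\bn$ and every Banach space $E$ with $\dim E\geq 2r$, $\rose_{r,k}$ admits a bi-Lipschitz embedding into $E$ with distortion at most $C$.} To prove the analog, I would take an Auerbach basis $(e_j)_{j=1}^{2r}$ of a $2r$-dimensional subspace of $E$, pair the basis vectors into $r$ two-dimensional subspaces $F_i=\spn(e_{2i-1},e_{2i})$, and embed each petal $\cyc_k$ via the regular polygon embedding of Lemma~\ref{lem:clover-cycles} composed with an isomorphism from $\ell_2^2$ to $F_i$ of Banach-Mazur distance at most $\sqrt{2}$ furnished by John's theorem, sending the basepoint $v_0$ to $0$. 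The Auerbach dual functionals yield the pointwise inequality $\bignorm{\sum_{i=1}^r x_i}\geq \tfrac12\max_i \norm{x_i}$ for $x_i\in F_i$, which, applied separately to points lying in different petals, converts to a uniform lower bound of the form $\norm{\phi(x)-\phi(y)}\gtrsim d_{\rose_{r,k}}(x,y)$ for the combined embedding $\phi$.

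Combining these two families of uniform embeddings via Lemma~\ref{lem:product-lemma} applied to the Cartesian product of the $3\cdot 2^i-2$ factors then produces a bi-Lipschitz embedding of $\la(\rose_{2^i,k})$ into $Y$ whose distortion depends only on $n=2^i$. The principal obstacle is establishing the rose analog of Lemma~\ref{lem:star-lemma} with a distortion bound independent of $r$. A direct use of Lemma~\ref{lem:clover-cycles} together with John's theorem would yield only $O(\sqrt{r})$ distortion when embedding $\rose_{r,k}$ into a $2r$-dimensional subspace of $Y$, which is insufficient because $r$ depends on $k$ through the coalescence iteration (in fact $r$ grows exponentially in $k$). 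The Auerbach-basis construction sketched above removes this $\sqrt{r}$ factor and replaces it with a universal constant, which is exactly what is needed for Lemma~\ref{lem:product-lemma} to deliver a $k$-independent distortion $D(n)$.
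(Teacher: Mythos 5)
Your proposal is correct and follows the paper's overall architecture (reduction to $n=2^i$, iteration of Theorem~\ref{thm:coalescence-lemma} using $\clo(\rose_{r,s},t)=\rose_{rt,s}$, and assembly via Lemma~\ref{lem:product-lemma}), but it diverges at the step of embedding the $\rose_{r,k}$ factors into finite-codimensional subspaces. The paper handles this by combining Lemma~\ref{lem:clover-cycles}, which already embeds $\rose_{r,k}$ into $\ell_2^{2r}$ with distortion at most $\frac{\pi}{2}\sqrt{2}$ uniformly in $r$ and $k$, with \emph{Dvoretzky's theorem}: every finite-codimensional subspace $Z$ of the (necessarily infinite-dimensional) space $Y$ contains a $(1+\vare)$-isomorphic copy of $\ell_2^{2r}$, so the $O(\sqrt{r})$ loss from John's theorem that you identify as the principal obstacle never arises. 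Your alternative --- pairing an Auerbach basis into $r$ two-dimensional blocks $F_i$, using John only in dimension~$2$, and using the dual functionals to get $\bignorm{\sum_i x_i}\geq\tfrac12\max_i\norm{x_i}$ and hence a universal lower bound across distinct petals --- is a valid substitute and is in fact more elementary, being a genuine rose analogue of Lemma~\ref{lem:star-lemma}; what it costs is a slightly worse (but still universal) constant, and what it buys is independence from Dvoretzky's theorem. Your treatment of the $\la(\cyc_k)$ factors (finite-codimensional subspaces of a non-superreflexive space are non-superreflexive, so~\cite{RO2018} applies to $Z$ directly) also differs mildly from the paper, which routes through the embedding of $\la(\cyc_k)$ into a product of $8$ trees and Bourgain's theorem; your shortcut is fine provided one notes that the distortion bound in~\cite{RO2018} is a universal constant rather than one depending on the particular non-superreflexive space, which is needed to get a single $D_i$ valid for all finite-codimensional $Z$ in Lemma~\ref{lem:product-lemma} --- this holds because the~\cite{RO2018} construction itself factors through trees and Bourgain's universal $1+\vare$ bound.
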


\begin{proof}
  It is sufficient to prove the proposition for each
  $n\in\{2^{i}:\,i\in\bn\}$. Observe that
  $\rose_{2^{i},k}=\rose_{2^{i-1},k}\coa \rose_{2^{i-1},k}$ and
  $\abs{\rose_{2^{i-1},k}}=(k-1)2^{i-1}+1$. Set
  $\beta_{i,k}=2^{(k-1)2^{i-1}+1}$. Applying
  Theorem~\ref{thm:coalescence-lemma}, $\la(\rose_{2^{i},k})$
  bi-Lipschitzly embeds with distortion at most~$2$ into
  \[
  \la(\rose_{2^{i-1},k})\cprod \la(\rose_{2^{i-1},k})\cprod
  \clo(\rose_{2^{i-1},k},\beta_{i,k})\cprod\clo(\rose_{2^{i-1},k},\beta_{i,k})\ .
  \]
  Now observe that $\clo(\rose_{r,s},t)=\rose_{rt,s}$ for any
  $r,s,t\in\bn$. If we apply Theorem~\ref{thm:coalescence-lemma}
  another $i-1$ times, we obtain that $\la(\rose_{2^{i},k})$
  bi-Lipschitzly embeds with distortion at most~$2^i$ into the
  Cartesian product of
  $4\cdot2^{i-1}+2\cdot2^{i-2}+\dots+2\cdot2+2=3\cdot2^i-2$ graphs
  each of which is either $\la(\cyc_k)$ or a graph of the form $\rose_{r,k}$
  for some $r\in\bn$. Note that all these graphs admit bi-Lipschitz
  embeddings into every finite-codimensional subspace of any
  non-superreflexive Banach space. Indeed, it was proved
  in~\cite{RO2018} that $\la(\cyc_k)$ can be embedded into a product
  of 8 trees, and hence one can again use Bourgain's metric
  characterization of superreflexivity~\cite{bourgain:86}. The
  conclusion follows by appealing to Lemma~\ref{lem:clover-cycles},
  Dvoretzky's theorem, and Lemma~\ref{lem:product-lemma}.  
\end{proof}

\begin{rem}
  By carefully keeping track of the distortions of embeddings
  in the proofs of Propositions~\ref{prop:star-characterization}
  and~\ref{prop:rose-characterization}, one obtains order $n^2$
  upper bounds on $C(n)$ and $D(n)$.
\end{rem}

\begin{rem}
  At this point we have established the implications
  ``(ii)$\implies$(i)'' and ``(iii)$\implies$(i)'' in
  Theorem~\ref{mainthm:B}. The remaining implications
  will be shown in Section~\ref{sec:5}.
\end{rem}

\section{Induced maps between lamplighter graphs}

A map $f\colon G\to H$ between two graphs induces a map
$\fb\colon \la(G)\to\la(H)$ defined by $\fb(A,x)=\big(f(A),f(x)\big)$,
where $f(A)=\{f(y):\,y\in A\}$. Moreover, if $G$ and $H$ are connected
and for some $a,b\in[0,\infty]$ we have
\[
a\cdot d_G(x,y) \leq d_H(f(x),f(y))\leq b\cdot d_G(x,y)
\]
for all $x,y\in G$, then it is easy to see (\cf Remark following
Lemma~\ref{lem:complex-induced-map} below) that
\[
a'\cdot d_{\la(G)}(u,v) \leq d_{\la(H)}\big(\fb(u),\fb(v)\big) \leq
b'\cdot d_{\la(G)}(u,v)
\]
for all $u,v\in\la(G)$, where $a'=\min \{1,a\}$ and $b'=\max\{1,b\}$. 
Of course, this result is only interesting if $a>0$, $b<\infty$ or
both, \ie if $f$ is co-Lipschitz, Lipschitz or bi-Lipschitz,
respectively. In particular, if $(G,d_G)$ bi-Lipschitzly embeds into
$(H,d_H)$, then $\la(G)$ bi-Lipschitzly embeds into 
$\la(H)$. Observe that if $f$ is injective, then $b\geq 1$, and if in
addition $0<a\leq 1$, then $\dist(\fb)\leq\dist(f)$. However, there
are bi-Lipschitz embeddings of interest where $a\to\infty$ 
with $b/a$ bounded. In this case, $b'/a'$ gets arbitrarily large. For
this reason, we will consider more complicated induced maps in
Lemmas~\ref{lem:complex-induced-map}
and~\ref{lem:even-more-complex-induced-map} below.

\begin{lem}
  \label{lem:complex-induced-map} 
  Let $f\colon G\to H$ be a map between connected graphs $G$ and $H$,
  and let $a,b\in[0,\infty]$  be given so that 
  \[
  a\cdot d_G(x,y)\leq d_H(f(x),f(y))\leq b\cdot d_G(x,y)\qquad
  \text{for all }x,y\in G\ .
  \]
  Then for every $m\in\{0\}\cup\{1,\dots, \ceil{a/2}-1\}$, there is a
  map $\fb_m\colon \la(G)\to\la(H)$ induced by $f$ and $m$ such that
  $\fb_0=\fb$ and
  \[
  a'\cdot d_{\la(G)}(u,v)\leq d_{\la(H)}\big(\fb_m(u),\fb_m(v)\big)
  \leq b'\cdot d_{\la(G)}(u,v)
  \]
  for all $u,v\in\la(G)$, where $a'=\min\{a,m+1\}$ and
  $b'=\max\{b,3m+1\}$.
\end{lem}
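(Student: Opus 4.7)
The plan is to construct $\fb_m$ by enlarging the natural induced map $\fb$: for every vertex $a\in A$, light not only $f(a)$ but also a length-$m$ ``tail'' of $m$ extra lamps in $H$ near $f(a)$. One may assume $|G|\geq 2$, the case $|G|\leq 1$ being trivial. The constraint $m\leq\ceil{a/2}-1$ forces $a>2m$, hence $d_H(f(x),f(y))\geq 2m+1$ for $x\neq y$ in $G$ (using integrality of the graph distance). For each $x\in G$, I would fix once and for all a geodesic $(f(x)=p^x_0,p^x_1,\ldots,p^x_m)$ in $H$ of length~$m$ (such exists since some $f(y)$ lies at distance $>m$ from $f(x)$) and set $T(x)=\{p^x_1,\ldots,p^x_m\}$. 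A short triangle-inequality computation using $d_H(f(x),f(y))\geq 2m+1$ shows $T(x)\cap T(y)=\emptyset$ for $x\neq y$ and $T(x)\cap f(G)=\emptyset$. Then define
\[
\fb_m(A,x)=\Big(f(A)\cup\bigcup_{a\in A}T(a),\,f(x)\Big),
\]
which reduces to $\fb$ when $m=0$ since every $T(x)$ is empty.

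For the upper bound it suffices to verify the estimate on single edges of $\la(G)$ and sum along a geodesic. A horizontal edge $(A,x)\to(A,x')$ with $xx'\in E(G)$ is sent to a pair of vertices of $\la(H)$ with the same first coordinate, at distance $d_H(f(x),f(x'))\leq b$. A vertical edge $(A,x)\to(A\symdif\{x\},x)$ is sent to a pair $(A^*,f(x)),(B^*,f(x))$ whose first coordinates differ exactly by $\{f(x)\}\cup T(x)$ (by the disjointness of the tails and their disjointness from $f(G)$); these $m+1$ points lie on a geodesic of length $m$ from $f(x)$, so the salesman tour from $f(x)$ back to $f(x)$ through them has length $2m$. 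By~\eqref{eq:graph-metric-in-lamplighter} the image distance equals $2m+(m+1)=3m+1$, and so every edge of $\la(G)$ is sent to a pair at distance at most $b'=\max\{b,3m+1\}$, which yields the Lipschitz bound.

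For the lower bound, fix $u=(A,x)$, $v=(B,y)$, write $\fb_m(u)=(A^*,f(x))$, $\fb_m(v)=(B^*,f(y))$, and set $k=\abs{A\symdif B}$ and $T=\tsp_G(x,A\symdif B,y)$. The same disjointness properties give $\abs{A^*\symdif B^*}=(m+1)k$, and since $f(A\symdif B)\subset A^*\symdif B^*$,
\[
\tsp_H(f(x),A^*\symdif B^*,f(y))\geq\tsp_H(f(x),f(A\symdif B),f(y)).
\]
Extracting from an optimal tour in $H$ the subsequence of visits to the landmarks $\{f(x),f(y)\}\cup f(A\symdif B)$ and applying $d_H\geq a\cdot d_G$ to each consecutive pair of landmarks shows that this right-hand TSP is at least $a$ times the length of a walk in $G$ from $x$ to $y$ through $A\symdif B$, hence $\geq aT$. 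Combining with~\eqref{eq:graph-metric-in-lamplighter} gives
\[
d_{\la(H)}\big(\fb_m(u),\fb_m(v)\big)\geq aT+(m+1)k\geq\min\{a,m+1\}(T+k)=a'\,d_{\la(G)}(u,v).
\]
The main point—and the sole role of the hypothesis $m\leq\ceil{a/2}-1$—is to guarantee that the tails can be chosen pairwise disjoint and disjoint from $f(G)$; once this is in place, everything else is routine bookkeeping with Proposition~\ref{prop:graph-metric-in-lamplighter}.
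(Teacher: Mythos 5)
Your construction is the same as the paper's: you decorate each lit lamp $f(a)$ with an $m$-point tail along a path out of $f(a)$, use $2m<a$ to get the tails pairwise disjoint and disjoint from $f(G)$, and then combine the exact count $\abs{A^*\symdif B^*}=(m+1)\abs{A\symdif B}$ with the TSP comparison $a\cdot\tsp_G\leq\tsp_H\leq b\cdot\tsp_G$ via Proposition~\ref{prop:graph-metric-in-lamplighter}. The only cosmetic differences (geodesic tails instead of arbitrary paths, and checking the Lipschitz upper bound edge-by-edge rather than via the sandwich $\tsp_H(y,C,z)\leq\tsp_H(y,W_C,z)\leq\tsp_H(y,C,z)+2m\abs{C}$) do not change the argument, which is correct.
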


\begin{rem} 
  Assume that $f\colon G\to H$ is a bi-Lipschitz embedding, and thus
  that $b\geq 1$ and $a>0$. The observations made before the statement
  of the lemma follow immediately by taking $m=0$. On the other hand,
  choosing $m=\ceil{a/2}-1$, and by considering the fractions
  $\frac{b}{a}$, $\frac{b}{m+1}$, $\frac{3m+1}{a}$ and 
  $\frac{3m+1}{m+1}$, it is easy to see that 
  $\dist(\fb_m)\leq b'/a'\leq 3b/a$. Hence we obtain the universal bound
  \[
  \frac{\dist\big( \fb_{\ceil{a/2}-1} \big)}{\dist(f)} \leq 3\ .
  \]
\end{rem}

\begin{proof}
  If $m=0$ we define $\fb_0$ to be the natural induced map $\fb$, and
  the conclusion holds with $a'=\min \{a,1\}$ and
  $b'=\max\{b,1\}$. Indeed, let $x_0,x_1,\dots,x_n$ be vertices of
  $G$, and let $y_i=f(x_i)$ for $0\leq i\leq n$. Consider a walk $w$
  of length $\ell$ in $G$ from $x_0$ to $x_n$ visiting
  $x_0,x_1,\dots,x_n$ in this order. For each $i=1,\dots,n$, let $w_i$
  be the section of $w$ from $x_{i-1}$ to $x_i$, and let $\ell_i$ be
  the length of $w_i$. Then $d_H(f(x_{i-1}),f(x_i))\leq b\cdot
  d_G(x_{i-1},x_i)\leq b\cdot\ell_i$ for each $i=1,\dots,n$. Hence
  there is a walk in $H$ of length at most $b\cdot\ell$ from $y_0$ to
  $y_n$ visiting $y_0,y_1,\dots,y_n$ in this order. An essentially
  identical argument shows that if there is a walk in $H$ of length
  $\ell$ from $y_0$ to $y_n$ visiting $y_0,y_1,\dots,y_n$ in this
  order, then there is a walk in $G$ of length at most $\ell/a$ from
  $x_0$ to $x_n$ visiting $x_0,x_1,\dots,x_n$ in this order. It
  follows that
  \begin{equation}
    \label{eq:tsp-in-H}
    a\cdot \tsp_G (x,C,y) \leq \tsp_H \big( f(x),f(C),f(y)\big) \leq
    b\cdot \tsp_G (x,C,y)
  \end{equation}
  for all $x,y\in G$ and for all finite $C\subset G$. Now fix vertices
  $(A,x)$ and $(B,y)$ of $\la(G)$. Observe that
  $f(A)\symdif f(B)\subset f(A\symdif B)$ and
  $\abs{f(A\symdif B)}\leq \abs{A\symdif B}$, and moreover equality
  holds when $a>0$. Hence, using
  Proposition~\ref{prop:graph-metric-in-lamplighter}
  and~\eqref{eq:tsp-in-H}, we have
  \begin{multline*}
    d_{\la(H)}\big( f(A,x),f(B,y) \big) =
    \tsp_H \big( f(x),f(A)\symdif f(B), f(y)\big) + \abs{f(A)\symdif
      f(B)}\\
    \leq b\cdot \tsp_G (x,A\symdif B,y)+\abs{A\symdif B} \leq b'\cdot
    d_{\la(G)} \big( (A,x),(B,y) \big)
  \end{multline*}
  and if $a>0$, then
  \begin{multline*}
    d_{\la(H)}\big( f(A,x),f(B,y) \big) =
    \tsp_H \big( f(x),f(A\symdif B), f(y)\big) + \abs{f(A\symdif B)}\\
    \geq a\cdot \tsp_G (x,A\symdif B,y)+\abs{A\symdif B} \geq a'\cdot
    d_{\la(G)} \big( (A,x),(B,y) \big)\ .
  \end{multline*}

  Assume now that $1\leq m\leq \ceil{a/2}-1$ and in particular that
  $a>0$. Set $a'=\min(a,m+1)$ and $b'=\max(b,3m+1)$. Without loss of
  generality we will assume that $G$ has at least two vertices. For
  every vertex $y$ in the image of $f$, choose a path
  $(u_0,u_1,u_2,\ldots, u_m)$ in $H$ starting at $u_0=y$, and set
  $W_y=\{ u_0, u_1,\dots,u_m\}$. This can always be done by picking
  another vertex $z\in f(G)$ and using $d_H(y,z)\geq a$ which in turn
  follows from the assumptions on $f$. It is easy to see that
  $\tsp_H(y,W_y,y)=2m$ since the unique optimal walk for the salesman
  is the path from $u_0$ to $u_m$ and back. Note also that the sets
  $W_y$, $y\in f(G)$, are pairwise disjoint, since the vertices in
  $f(G)$ are $a$-separated and $2m<a$. For a finite set
  $C\subset f(G)$ we put $W_C=\bigcup_{y\in C}  W_y$. Finally, we
  define $\fb_m\colon \la (G)\to \la(H)$ by letting
  $\fb_m(A,x)=\big(W_{f(A)}, f(x)\big)$.

  Given vertices $y,z$ and a finite subset $C$ in the image of $f$, we
  now obtain estimates on $\tsp_H(y,W_C,z)$. Since $C\subset W_C$, we
  immediately obtain  $\tsp_H (y,W_C,z) \geq \tsp_H (y,C,z)$. On the
  other hand, consider the following walk. Start with a walk $w$ in
  $H$ of length $\tsp_H(y,C,z)$ from $y$ to $z$ visiting all vertices
  of $C$, and each time $w$ visits a vertex $u\in C$, insert a walk of
  length $2m=\tsp_H(u,W_u,u)$ starting and ending at $u$ and visiting
  all vertices in $W_u$. The resulting walk from $y$ to $z$ visits all
  the vertices in $W_C$ and has length
  $\tsp_H(y,C,z)+2m\abs{C}$. Therefore, we have
  \begin{equation}
    \label{eq:tsp-in-H-inflated}
    \tsp_H (y,C,z)\leq\tsp_H (y,W_C,z) \leq \tsp_H(y,C,z)+2m\abs{C}\ .
  \end{equation}
  Let us now fix vertices $(A,x)$ and $(B,y)$ in $\la(G)$. Observe that
  $W_{f(A)}\symdif W_{f(B)}=W_{f(A)\symdif f(B)}$ and
  $f(A)\symdif f(B)=f(A\symdif B)$. Combining
  Proposition~\ref{prop:graph-metric-in-lamplighter},
  \eqref{eq:tsp-in-H-inflated} and~\eqref{eq:tsp-in-H}, we obtain
  \begin{align*}
    d_{\la(H)} \big( &\fb_m(A,x),\fb_m(B,y) \big) =
    \tsp_H \big( f(x),W_{f(A\symdif B)},f(y)\big) +
    \bigabs{W_{f(A \symdif B)}}\\
    &\leq \tsp_H \big(f(x),f(A\symdif B),f(y)\big) + 2m\cdot\abs{f(A\symdif B)}+(m+1)\cdot\abs{f(A\symdif B)}\\
    &\leq b\cdot \tsp_G (x,A\symdif B,y) + (3m+1)\cdot\abs{A\symdif B}\\
    &\leq b'\cdot d_{\la(G)} \big((A,x),(B,y)\big)\ ,
  \end{align*}
  and
  \begin{align*}
    d_{\la(H)} \big( \fb_m(A,x),\fb_m(B,y) \big) &\geq 
    \tsp_H \big( f(x),f(A\symdif B),f(y)\big) + \bigabs{W_{f(A \symdif B)}}\\
    &\geq a\cdot \tsp_G (x,A\symdif B,y) + (m+1)\cdot\abs{f(A\symdif B)}\\
    &\geq a'\cdot d_{\la(G)} \big((A,x),(B,y)\big)\ .
  \end{align*}
\end{proof}

In the last lemma of this section we consider a more sophisticated
construction in order to improve the bound on the distortion. This
construction is of a slightly different nature since it provides an
embedding with a higher degree of faithfulness at the expense that we
need to consider the lamplighter graph over a slightly bigger graph
that contains the original graph $H$ under scrutiny. In some specific
situations (see Proposition~\ref{prop:lamp-complete-into-binary}),
this turns out not to be an issue and
Lemma~\ref{lem:even-more-complex-induced-map} can be efficiently used
to significantly improve the distortion.

Let us fix a map $f\colon G\to H$ between two graphs. Let
$Q=(V,E,v_0)$ be a pointed graph and $W$ be a finite subset of $V$
with $v_0\in W$. Let $\Ht$ be the graph obtained by coalescing $H$
with $\abs{f(G)}$ copies of $Q$ as follows. For each vertex $y$ in the
image of $f$, we coalesce to $H$ at the vertex $y$ the copy of $Q$
that corresponds to $y$. This leads to a map
$\tilde{f}\colon\la(G)\to\la(\Ht)$ induced by $f$, $Q$ and 
$W$ and defined as follows. For $y\in f(G)$ we let $W_y$ denote
the set $W$ considered in the copy of $Q$ that corresponds to $y$, and
for a finite subset $C$ of $f(G)$ we let
$W_C=\bigcup_{y\in C} W_y$. Finally, for a vertex $(A,x)$ of $\la(G)$
define $\tilde{f}(A,x)=\big(W_{f(A)},f(x)\big)$.

\begin{lem}
  \label{lem:even-more-complex-induced-map}
  Let $f\colon G\to H$ be a map between connected graphs. Let
  $Q=(V,E,v_0)$ be a connected pointed graph and $W$ be a finite
  subset of $V$ with $v_0\in V$. Let $\Ht$ and
  $\tilde{f}\colon \la(G)\to\la(\Ht)$ be the map defined above. Assume
  that there exist $a,b\in[0,\infty]$ such that
  \[
  a\cdot d_G(x,y) \leq d_H\big(f(x),f(y)\big)\leq b\cdot d_G(x,y)
  \]
  for all $x,y\in G$. Then it follows for all $u,v\in\la(G)$ that
  \[
  a'\cdot d_{\la(G)}(u,v) \leq d_{\la(\Ht)}\big(\tilde{f}(u),\tilde{f}(v)\big) \leq
  b'\cdot d_{\la(G)}(u,v)\ ,
  \]
  where $a'=\min \{a,c\}$, $b'=\max\{b,c\}$ and
  $c=\tsp_Q(v_0,W,v_0)+\abs{W}$.
\end{lem}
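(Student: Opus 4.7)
The plan is to mimic the computation carried out in the proof of Lemma~\ref{lem:complex-induced-map}, but to exploit the rigid structural feature that each copy of $Q$ is glued to $H$ only at its basepoint $v_0=u$, where $u$ ranges over $f(G)$. This structure will allow one to compute the Travelling Salesman cost in $\Ht$ exactly as the corresponding cost in $H$ plus a per-visit ``tax'' of $\tsp_Q(v_0,W,v_0)$.

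The first step is to unfold $d_{\la(\Ht)}(\tilde{f}(A,x),\tilde{f}(B,y))$ using Proposition~\ref{prop:graph-metric-in-lamplighter}. Since the sets $W_y$ for $y\in f(G)$ are pairwise disjoint (each $W_y$ lives in its own copy of $Q$), one has $W_{f(A)}\symdif W_{f(B)}=W_{f(A)\symdif f(B)}$, and hence $\bigabs{W_{f(A)}\symdif W_{f(B)}}=\abs{f(A)\symdif f(B)}\cdot \abs{W}$.

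The second, and main, step is to establish the identity
\[
\tsp_{\Ht}\big(f(x),W_C,f(y)\big)=\tsp_H\big(f(x),C,f(y)\big)+\abs{C}\cdot\tsp_Q(v_0,W,v_0)
\]
for every finite $C\subset f(G)$. The upper bound is obtained by taking an optimal $\tsp_H$-walk from $f(x)$ to $f(y)$ through $C$ and, at the first passage through each $u\in C$, inserting an optimal closed walk of length $\tsp_Q(v_0,W,v_0)$ through $W_u$ inside the copy of $Q$ based at $u$. The lower bound is the main obstacle: it relies on the fact that the only gateway between $H$ and the copy of $Q$ based at $u$ is the vertex $u$ itself, so any walk in $\Ht$ visiting $W_u$ must decompose into an $H$-walk that passes through $u$ together with one or more ``excursions'' entering and leaving the $Q$-copy at $u$, whose total length is bounded below by $\tsp_Q(v_0,W,v_0)$. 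Stripping those excursions yields a walk in $H$ from $f(x)$ to $f(y)$ that visits every vertex of $C$.

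With this identity in hand, the inequality \eqref{eq:tsp-in-H} (already derived inside the proof of Lemma~\ref{lem:complex-induced-map} from the bounds on $f$) gives $a\cdot\tsp_G(x,A\symdif B,y)\leq \tsp_H(f(x),f(A\symdif B),f(y))\leq b\cdot \tsp_G(x,A\symdif B,y)$. For the upper bound one then uses $f(A)\symdif f(B)\subseteq f(A\symdif B)$, together with $\abs{f(A\symdif B)}\leq\abs{A\symdif B}$, to get
\[
d_{\la(\Ht)}\big(\tilde{f}(A,x),\tilde{f}(B,y)\big)\leq b\cdot \tsp_G(x,A\symdif B,y)+c\cdot\abs{A\symdif B}\leq b'\cdot d_{\la(G)}\big((A,x),(B,y)\big).
\]
For the lower bound, the case $a=0$ is trivial since then $a'=0$, while if $a>0$ the hypothesis forces distinct vertices of $G$ to map to distinct vertices of $H$, so $f(A)\symdif f(B)=f(A\symdif B)$ and $\abs{f(A\symdif B)}=\abs{A\symdif B}$, yielding
\[
d_{\la(\Ht)}\big(\tilde{f}(A,x),\tilde{f}(B,y)\big)\geq a\cdot \tsp_G(x,A\symdif B,y)+c\cdot\abs{A\symdif B}\geq a'\cdot d_{\la(G)}\big((A,x),(B,y)\big),
\]
which completes the proof.
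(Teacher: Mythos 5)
Your proposal is correct and follows essentially the same route as the paper's proof: the same exact formula $\tsp_{\Ht}(f(x),W_C,f(y))=\tsp_H(f(x),C,f(y))+\abs{C}\cdot\tsp_Q(v_0,W,v_0)$, followed by the same unfolding via Proposition~\ref{prop:graph-metric-in-lamplighter}, the inequality~\eqref{eq:tsp-in-H}, and the dichotomy on whether $a>0$. If anything, your justification of the lower bound in that TSP identity (stripping the closed excursions into each $Q$-copy, which must enter and leave through the single gateway vertex) is more explicit than the paper's, which simply asserts the constructed walk is ``easily seen to be optimal.''
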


\begin{rem}
  This result can be very versatile. Assume that $a,b\in\bn$ and
  $b-a\geq 2$. Assume further that $\abs{V}\geq b$. Then $W$ can be
  chosen so that $a\leq c\leq b$, and hence
  $\dist(\tilde{f})\leq b/a$. Indeed, given a finite $W\subset V$, replacing
  $W$ by $W\cup\{q\}$ for some $q\in V\setminus W$ that is joined to a
  vertex in $W$, the value of $c$ increases by at most~$3$. Hence,
  starting with $W=\{v_0\}$ and adding one vertex at a time, we
  eventually arrive at a set $W$ for which $a\leq c\leq b$ holds.
\end{rem}

\begin{proof}
  Given vertices $y,z$ and a finite subset $C$ in the image of $f$, an
  optimal solution for computing $\tsp_{\Ht}\big(y,W_C,z)$ can be
  obtained as follows. Start with a walk $w$ in $H$ of length
  $\tsp_H(y,C,z)$ from $y$ to $z$ visiting all vertices of $C$, and
  each time $w$ visits a vertex $u\in C$ insert a walk of length
  $\tsp_Q(v_0,W,v_0)$ in the copy of $Q$ corresponding to $u$ that
  starts and ends at $v_0$ and visits all vertices in $W$. The
  resulting walk is easily seen to be optimal, and hence yields the
  formula
  \begin{equation}
    \label{eq:tsp-in-Ht}
    \tsp_{\Ht}(y,W_C,z)=\tsp_H(y,C,z)+\abs{C}\cdot\tsp_Q(v_0,W,v_0)
    \ .
  \end{equation}
  Let us now fix vertices
  $(A,x)$ and $(B,y)$ in $\la(G)$. Observing that
  $W_{f(A)}\symdif W_{f(B)}=W_{f(A)\symdif f(B)}$, and combining
  Proposition~\ref{prop:graph-metric-in-lamplighter}
  with~\eqref{eq:tsp-in-Ht}, we obtain
  \begin{multline*}
    d_{\la(\Ht)} \big(\tilde{f}(A,x),\tilde{f}(B,y) \big) =
    \tsp_{\Ht}\big(f(x),W_{f(A)\symdif f(B)},f(y)\big) +
    \bigabs{W_{f(A)\symdif f(B)}}\\
    = \tsp_H\big(f(x),f(A)\symdif f(B),f(y)\big) +
    \big(\tsp_Q(v_0,W,v_0)+\abs{W}\big)\cdot\abs{f(A)\symdif f(B)}\ .
  \end{multline*}
  As before, we have $f(A)\symdif f(B)\subset f(A\symdif B)$ and
  $\abs{f(A\symdif B)}\leq \abs{A\symdif B}$, and moreover equality
  holds when $a>0$. Hence, by~\eqref{eq:tsp-in-H}, which is still
  valid in this context, we obtain
  \begin{align*}
    d_{\la(\Ht)} \big(\tilde{f}(A,x),\tilde{f}(B,y) \big) &\leq b\cdot \tsp_G
    (x,A\symdif B,y) + c\cdot\abs{A\symdif B}\\
    &\leq b'\cdot d_{\la(G)} \big((A,x),(B,y)\big)\ ,
  \end{align*}
  and if $a>0$, then
  \begin{align*}
    d_{\la(\Ht)} \big(\tilde{f}(A,x),\tilde{f}(B,y) \big) &\geq a\cdot \tsp_G
    (x,A\symdif B,y) + c\cdot\abs{f(A\symdif B)}\\
    &\geq a'\cdot d_{\la(G)} \big((A,x),(B,y)\big)\ .
  \end{align*}
\end{proof}

\section{Binary trees and Hamming cubes in lamplighter graphs}
\label{sec:5}

It was observed in~\cite{lpp:96} that the lamplighter group
$\bz_2\wr\bz$ contains a bi-Lipschitz copy of the infinite binary
tree. We provide a simple proof of the finite version of this fact.

\begin{lem}
  \label{lem:tree-into-lamplighter-of-path}
  Let $k\in\bn$. Then $\bin_k$ bi-Lipschitzly embeds with distortion
  at most $2$ into $\la(\pa_k)$.
\end{lem}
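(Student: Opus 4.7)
The plan is to use the natural encoding of binary strings as lamp configurations. Define $f\colon\bin_k\to\la(\pa_k)$ by sending $\delta=(\delta_1,\dots,\delta_m)$ to $(A_\delta,v_m)$, where $A_\delta=\{v_{j-1}:1\leq j\leq m,\,\delta_j=1\}\subseteq\{v_0,\dots,v_{m-1}\}$. So each binary string is encoded by placing the lamplighter at the position $v_{|\delta|}$ corresponding to its length, with the lit lamps at positions $v_0,\dots,v_{m-1}$ recording the bits of $\delta$.

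To analyze $d_{\la(\pa_k)}(f(\delta),f(\vare))$ for two strings with $m=|\delta|\leq|\vare|=n$, first I would let $i$ be the length of the longest common prefix of $\delta$ and $\vare$, so that $d_{\bin}(\delta,\vare)=(m-i)+(n-i)$. The key structural observation is that $A_\delta$ and $A_\vare$ coincide on $\{v_0,\dots,v_{i-1}\}$, and both are contained in $\{v_0,\dots,v_{n-1}\}$; hence $A_\delta\symdif A_\vare\subseteq\{v_i,\dots,v_{n-1}\}$, and moreover if $i<m$ then $\delta_{i+1}\neq\vare_{i+1}$ forces $v_i\in A_\delta\symdif A_\vare$. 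In particular $|A_\delta\symdif A_\vare|\leq n-i$. I will then apply the formula of Proposition~\ref{prop:graph-metric-in-lamplighter}:
\[
d_{\la(\pa_k)}(f(\delta),f(\vare))=\tsp_{\pa_k}(v_m,A_\delta\symdif A_\vare,v_n)+|A_\delta\symdif A_\vare|.
\]

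For the \emph{upper bound}, the walk from $v_m$ that travels left to $v_i$ and then right to $v_n$ has length $m+n-2i$ and sweeps through every vertex $v_j$ with $i\leq j\leq n-1$, so it visits all of $A_\delta\symdif A_\vare$. This gives $\tsp\leq m+n-2i=d_{\bin}(\delta,\vare)$. Combined with $|A_\delta\symdif A_\vare|\leq n-i\leq d_{\bin}(\delta,\vare)$, we get $d_{\la(\pa_k)}(f(\delta),f(\vare))\leq 2\cdot d_{\bin}(\delta,\vare)$. For the \emph{lower bound}, any walk from $v_m$ to $v_n$ that visits $A_\delta\symdif A_\vare$ must in particular visit $v_i$ (when $i<m$) and so has length at least $(m-i)+(n-i)=d_{\bin}(\delta,\vare)$; when $i=m$ (the case $\delta\preccurlyeq\vare$) the walk has length at least $n-m=d_{\bin}(\delta,\vare)$ just to connect $v_m$ to $v_n$. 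Either way $d_{\la(\pa_k)}(f(\delta),f(\vare))\geq d_{\bin}(\delta,\vare)$, proving distortion at most $2$.

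There is no real obstacle here: the only point that requires care is to confirm that $A_\delta\symdif A_\vare$ does not extend past $v_{n-1}$ (so that the natural back-and-forth walk of length $m+n-2i$ already visits every required lamp, and no extra detour to the right is needed); this is exactly where having $m\leq n$ and the containment of both lamp sets in $\{v_0,\dots,v_{n-1}\}$ is used.
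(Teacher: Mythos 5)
Your proof is correct and uses the same embedding and essentially the same analysis as the paper's: the map $\delta\mapsto(A_\delta,v_{|\delta|})$, the observation that $A_\delta\symdif A_\vare\subseteq\{v_i,\dots,v_{n-1}\}$ with $v_i$ in the symmetric difference when the strings diverge, and the resulting computation of the travelling-salesman term as $m+n-2i=d_{\bin}(\delta,\vare)$. The only cosmetic difference is that the paper gets the Lipschitz upper bound by checking adjacent vertices of $\bin_k$, while you estimate arbitrary pairs directly.
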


\begin{proof}
  Let $v_0,\dots,v_k$ be the vertices of $\pa_k$ with edges
  $v_{i-1}v_i$ for $1\leq i\leq k$. For any
  $\vare=(\vare_1,\dots,\vare_n)\in \bin_k$, let 
  $A_{\vare}=\{v_{s-1}:\,\vare_s=1\}$, and define
  $f\colon \bin_k\to\la(\pa_k)$ by setting
  $f(\vare)=(A_\vare,v_{\abs{\vare}})$. We show that $f$ is a
  bi-Lipschitz embedding with distortion at most~2.

  Let us fix $\delta,\vare\in \bin_k$ and assume without loss of
  generality that $\abs{\delta}\leq \abs{\vare}$. Then by
  Proposition~\ref{prop:graph-metric-in-lamplighter} we have
  \begin{equation}
    d_{\la(\pa_k)}\big(f(\delta),f(\vare)\big) =
    \tsp_{\pa_k}\big(v_{\abs{\delta}},A_\delta\symdif
    A_\vare,v_{\abs{\vare}}) + \abs{A_\delta\symdif A_\vare}\ .
  \end{equation}
  Assume that $\delta$ and $\vare$ are adjacent, and thus
  $\vare=(\delta,\delta_{m+1})$, where $m=\abs{\delta}$. If
  $\delta_{m+1}=0$, then $A_\delta\symdif A_\vare=\emptyset$,
  otherwise $A_\delta\symdif A_\vare=\{v_m\}$. Therefore,
  \[
  d_{\la(\pa_k)}\big(f(\delta),f(\vare)\big)=
  \tsp_{\pa_k}\big(v_m,A_\delta\symdif A_\vare,v_{m+1}\big) +
  \abs{A_\delta\symdif A_\vare}\leq 1+1=2\ ,
  \]
  and thus $f$ is $2$-Lipschitz.

  We now derive the lower bound. Let $\delta\meet\vare$ denote the
  last common ancestor of $\delta$ and $\vare$. Thus,
  $\delta\meet\vare=(\delta_1,\dots,\delta_r)$, where
  $r=\max\{ i\geq 0:\, \delta_j=\vare_j \text{ for }1\leq j\leq i\}$.
  Since by definition we have
  $A_\delta\symdif
  A_\vare\subset\{v_{\abs{\delta\meet\vare}},\dots,v_{\abs{\vare}-1}\}$,
  an optimal solution for computing
  $\tsp_{\pa_k}(v_{\abs{\delta}},A_\delta\symdif
  A_\vare,v_{\abs{\vare}})$ starts at $v_{\abs{\delta}}$, then travels
  to $v_{\abs{\delta\meet\vare}}$, and finally travels to
  $v_{\abs{\vare}}$. Thus,
  \begin{align*}
    \tsp_{\pa_k}
    \big(v_{\abs{\delta}},A_\delta\symdif A_\vare,v_{\abs{\vare}}\big)
    &=
    d_{\pa_k}\big(v_{\abs{\delta}},v_{\abs{\delta\meet\vare}}\big)
    +d_{\pa_k}\big(v_{\abs{\delta\meet\vare}},v_{\abs{\vare}}\big)\\
    &=(\abs{\delta}-\abs{\delta\meet\vare})
    +(\abs{\vare}-\abs{\delta\meet\vare})\\
    &=d_{\bin}(\delta,\vare).
  \end{align*}
  It follows that 
  \begin{equation}
    d_{\la(\pa_k)}\big(f(\delta),f(\vare)\big) \geq
    d_{\bin}(\delta,\vare)\ .
  \end{equation}
\end{proof}
It is clear that a similar argument as in the proof above shows that
the lamplighter graph $\la(\pa_\infty)$ over the infinite path
$\pa_\infty$ contains a bi-Lipschitz copy of the infinite binary
tree. Since $\la(\pa_\infty)$ and $\bz_2\wr \bz$ are isometric (with
suitable choice of generators for $\bz_2\wr\bz$), the observation
from~\cite{lpp:96} can be
recovered. Lemma~\ref{lem:tree-into-lamplighter-of-path} also provides
the final result we need to complete the proof of
Theorem~\ref{mainthm:B}.

\begin{proof}[Proof of Theorem~\ref{mainthm:B}]
  The implications ``(ii)$\implies$(i)'' and ``(iii)$\implies$(i)''
  follow from Propositions~\ref{prop:star-characterization}
  and~\ref{prop:rose-characterization}, respectively. To establish the
  reverse implications, fix
  $n\in\bn$. Observe that for each $k\in\bn$ the graphs $\st_{n,k}$
  and $\rose_{n,2k}$ contain isometric copies of $\pa_k$, and hence by
  combining Lemmas~\ref{lem:complex-induced-map}
  and~\ref{lem:tree-into-lamplighter-of-path}, the binary tree
  $\bin_k$ bi-Lipschitzly embeds with distortion at most~2 into the
  lamplighter graphs $\la(\st_{n,k})$ and $\la(\rose_{n,2k})$. The
  implications ``(i)$\implies$(ii)'' and ``(i)$\implies$(iii)'' now
  follow from Bourgain's metric characterization of
  superreflexivity~\cite{bourgain:86}.
\end{proof}

We now turn to the embeddability of Hamming cubes into lamplighter
graphs. Here $\com_n$, for $n\in\bn$, denotes the complete graph on $n$
vertices.

\begin{lem}
  \label{lem:hamming-into-lamplighter-of-complete}
  Let $k,m\in\bn$. Then $\ham_k$ bi-Lipschitzly embeds into
  $\la(\com_{km})$ with distortion at most $1+\frac1{2m}$.  
\end{lem}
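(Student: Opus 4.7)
The plan is to write down an explicit bi-Lipschitz embedding and read off the distortion from a closed-form distance formula. Identify the vertex set of $\com_{km}$ with $[k]\times[m]$, so that each coordinate $i\in[k]$ of $\ham_k$ corresponds to a ``column'' $V_i=\{i\}\times[m]$ of $m$ vertices. Define $f\colon\ham_k\to\la(\com_{km})$ by
\[
f(\vare)=\Big(\bigcup_{i:\,\vare_i=1}V_i,\;x_0\Big),
\]
with a single fixed base-point $x_0=(1,1)$. The whole point of this choice is that the factor $m$ inflates the $\ell_1$-contribution to the lamplighter metric, so that the additive slack present in the complete-graph travelling salesman problem becomes negligible relative to it.

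The distance reduces to a closed form via two elementary inputs. First, $|A_\vare\symdif A_\delta|=m\cdot d_\ham(\vare,\delta)$, since the columns are pairwise disjoint and each differing coordinate contributes its entire column of size~$m$. Second, the TSP in a complete graph is essentially trivial: for any vertex $u$ and any finite set $C$ with $|C|\geq 2$ one has $\tsp_{\com_{km}}(u,C,u)=|C|+1-\mathbf{1}[u\in C]$, obtained by simply enumerating the elements of $C\setminus\{u\}$ in an arbitrary order. Combining these with Proposition~\ref{prop:graph-metric-in-lamplighter}, I obtain, for all $\vare\neq\delta$ with $m\cdot d_\ham(\vare,\delta)\geq 2$,
\[
d_{\la(\com_{km})}\bigl(f(\vare),f(\delta)\bigr)=2m\,d_\ham(\vare,\delta)+1-\mathbf{1}\bigl[x_0\in A_\vare\symdif A_\delta\bigr],
\]
and since $x_0\in V_1$ the indicator is exactly $\mathbf{1}[\vare_1\neq\delta_1]$.

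From this formula the distortion drops out at once: the ratio $d_{\la(\com_{km})}(f(\vare),f(\delta))/d_\ham(\vare,\delta)$ equals $2m$ whenever $\vare$ and $\delta$ differ in coordinate~$1$, and equals $2m+1/d_\ham(\vare,\delta)\in(2m,\,2m+1]$ otherwise. Hence every ratio lies in $[2m,\,2m+1]$ and therefore $\dist(f)\leq(2m+1)/(2m)=1+\tfrac{1}{2m}$.

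The step I expect to need the most care is the boundary behaviour of the complete-graph TSP when $|A_\vare\symdif A_\delta|\leq 1$: in that regime the formula $\tsp(u,C,u)=|C|+1-\mathbf{1}[u\in C]$ overcounts by one, because a length-zero walk already suffices as soon as $C\subset\{u\}$. In our embedding this can only happen when $m=1$ and $\vare,\delta$ differ solely in the first coordinate, so only a handful of pairs are affected; I would resolve this either by a direct computation on these pairs or by a minor adjustment of the base-point $x_0$ so that $x_0\notin A_\vare\symdif A_\delta$ in the exceptional configuration, both of which keep every ratio inside the window $[2m,\,2m+1]$ and preserve the claimed bound on $\dist(f)$.
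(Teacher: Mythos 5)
Your construction is exactly the paper's: the same partition of $V(\com_{km})$ into $k$ blocks $V_1,\dots,V_k$ of size $m$, the same map $\vare\mapsto(A_\vare,x_0)$ with one fixed basepoint, and the same reduction to the complete-graph travelling salesman formula. For $m\geq 2$ your argument is complete and matches the paper's line for line, since then $\abs{A_\vare\symdif A_\delta}=m\,d_{\ham}(\vare,\delta)\geq 2$ for all $\vare\neq\delta$ and your TSP formula applies without exception.

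The one place where you and the paper part company is the $m=1$ boundary case, and you deserve credit for spotting an issue that the paper's own proof silently ignores --- but your proposed repairs do not work. When $m=1$ and $\vare,\delta$ differ exactly in the coordinate whose column contains $x_0$, one has $A_\vare\symdif A_\delta=\{x_0\}$, hence $\tsp_{\com_k}(x_0,\{x_0\},x_0)=0$ and $d_{\la(\com_k)}\bigl(f(\vare),f(\delta)\bigr)=0+1=1$, while $d_{\ham}(\vare,\delta)=1$. The ratio for such a pair is therefore $1$, which lies \emph{below} the window $[2m,2m+1]=[2,3]$, not inside it; combined with the pairs realizing ratio $3$ (namely $d_{\ham}=1$ with $x_0\notin A_\vare\symdif A_\delta$), the map $f$ has distortion $3$ rather than $\tfrac32$. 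Nor can you ``adjust the base-point so that $x_0\notin A_\vare\symdif A_\delta$ in the exceptional configuration'': the blocks partition the whole vertex set, so any choice of $x_0$ lies in some $V_{i_0}$, and then all $2^{k-1}$ pairs with $\vare\symdif\delta=\{i_0\}$ are exceptional. The identical overcount occurs in the paper's displayed formula for $\tsp_{\com_{km}}(v_0,V_I\symdif V_J,v_0)$ when $m\,d_{\ham}(I,J)=1$, so the bound $1+\tfrac{1}{2m}$ is, as proved there and here, only justified for $m\geq 2$. This does not endanger the paper's applications (Theorem~C and the subsequent remark only require some universal constant, and distortion~$3$ suffices), but if you want the clean statement for $m=1$ you need either a different map or a weaker constant in that case.
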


\begin{proof}
  Recall that $\ham_k$ can be thought of as the set of all subsets of
  $\{1,\dots,k\}$ and that under this identification the Hamming
  metric becomes the symmetric difference metric. Let us now partition
  the vertex set of $\com_{km}$ into $k$ sets $V_1,\dots,V_k$ each of
  size $m$, and let us also fix a vertex $v_0$ of $\com_{km}$. Define
  $f\colon \ham_k\to\la(\com_{km})$ by setting $f(I)=(V_I,v_0)$, where
  $V_I=\bigcup_{i\in I} V_i$.

  To estimate the distortion of $f$, let us fix distinct elements
  $I,J\in\ham_k$. Note that $V_I\symdif V_J=V_{I\symdif J}$, and hence
  \[
  \abs{V_I\symdif V_J}=m\abs{I\symdif J}=m\cdot d_{\ham}(I,J)\ .
  \]
  It follows that
  \[
  \tsp_{\com_{km}} (v_0,V_I\symdif V_J,v_0)=%
  \begin{cases}
    m d_{\ham}(I,J) & \text{if }v_0\in V_I\symdif V_J,\\
    m d_{\ham}(I,J)+1 & \text{if }v_0\notin V_I\symdif V_J\ .
  \end{cases}
  \]
  Combining the above with
  Proposition~\ref{prop:graph-metric-in-lamplighter} yields
  \[
  2m\cdot d_{\ham}(I,J) \leq d_{\la(\com_{km})} \big(f(I),f(J)\big)
  \leq 2m\cdot d_{\ham}(I,J) +1 \leq (2m+1)\cdot d_{\ham}(I,J)\ .
  \]
\end{proof}

\begin{rem}
  Lemma~\ref{lem:hamming-into-lamplighter-of-complete} shows in
  particular that for every $k\in\bn$, there is a bi-Lipschitz
  embedding of $\ham_k$ into $\la(\com_k)$ of distortion at
  most~$\frac32$, and that for every $k\in\bn$ and $\vare>0$, there
  exists $n\in\bn$ such that $\ham_k$ bi-Lipschitzly embeds into
  $\la(\com_n)$ with distortion at most~$1+\vare$, and moreover $n$
  can be chosen to be $\frac{k}{2\vare}$.
\end{rem}

At this point, we need one more ingredient to prove
Theorem~\ref{mainthm:C}, which is the following well known fact.

\begin{lem}
  \label{lem:complete-into-binary}
  Let $k\in\bn$ and $\vare>0$. Then $\com_k$ embeds with distortion at
  most $1+\vare$ into $\bin_n$ whenever
  $n\geq \log_2(k)\cdot\frac{1+\vare}{\vare}$.
\end{lem}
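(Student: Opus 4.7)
My plan is to use the standard embedding that sends the vertices of the complete graph $\com_k$ to equidistant leaves of $\bin_n$, chosen so that their pairwise last common ancestors sit at a controllably shallow level.

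Set $m=\lceil\log_2 k\rceil$, so that the level $\{0,1\}^m$ of $\bin_n$ contains at least $k$ vertices. The hypothesis $n\geq\log_2(k)\cdot(1+\vare)/\vare$ guarantees $n>\log_2 k$ since $(1+\vare)/\vare>1$, and since $n$ is a positive integer this forces $n\geq m$, so depth-$n$ leaves below depth-$m$ vertices actually exist. Choose any $k$ distinct vertices $v_1,\dots,v_k\in\{0,1\}^m$, extend each of them by $n-m$ zeros to obtain leaves $\ell_i$ at depth $n$, and define $f\colon\com_k\to\bin_n$ by $f(i)=\ell_i$.

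To estimate the distortion of $f$, fix distinct $i,j\in\{1,\dots,k\}$. Since both $\ell_i,\ell_j$ have length $n$, the formula for the graph distance on a binary tree gives
\[
d_{\bin}(\ell_i,\ell_j)=2n-2\abs{\ell_i\meet\ell_j},
\]
where $\meet$ denotes the longest common prefix. Appending zeros does not create new common prefix, so $\abs{\ell_i\meet\ell_j}=\abs{v_i\meet v_j}$, and since $v_i\neq v_j$ are both of length $m$ we have $\abs{v_i\meet v_j}\leq m-1$. Combining this lower bound with the trivial upper bound yields
\[
2(n-m+1)\leq d_{\bin}\bigl(f(i),f(j)\bigr)\leq 2n.
\]
Because $d_{\com_k}(i,j)=1$ for every pair of distinct vertices, the distortion of $f$ is therefore at most $n/(n-m+1)$.

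It remains to verify that $n/(n-m+1)\leq 1+\vare$, which rearranges to $n\geq(1+\vare)(m-1)/\vare$. Since $m-1=\lceil\log_2 k\rceil-1\leq\log_2 k$, this last inequality is implied by the hypothesis $n\geq\log_2(k)\cdot(1+\vare)/\vare$, completing the argument. There is no genuine obstacle here: the only mild bookkeeping point is confirming that $n\geq m$ so that the construction fits inside $\bin_n$, and this is what the arithmetic in the opening paragraph is for.
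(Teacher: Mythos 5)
Your proposal is correct and is essentially the paper's own argument: both constructions send the $k$ vertices to depth-$n$ leaves whose pairwise last common ancestors lie at depth at most $m-1$ (the paper phrases this as picking one leaf from each of $k$ copies of $\bin_t$ coalesced onto the leaves of $\bin_s$, with $s=m$, $t=n-m$), yielding the same bounds $2(n-m+1)\leq d_{\bin}(f(i),f(j))\leq 2n$ and hence distortion at most $\frac{n}{n-m+1}=\frac{s+t}{t+1}$. Your final arithmetic verifying that the hypothesis on $n$ forces this ratio below $1+\vare$ is a correct (and slightly more explicit) version of the paper's choice of $s$ and $t$.
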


\begin{proof}
  Choose $s,t\in\bn$ such that $2^s\geq k$ and
  $\frac{s+t}{t+1}<1+\vare$. We show that $n=s+t$ works.

  By a \emph{leaf }of the binary tree $\bin_s$ of height $s$, we mean
  a vertex $\vare$ with $\abs{\vare}=s$.  The binary tree $\bin_n$ of
  height $n=s+t$ can be considered as being constructed by coalescing
  $2^s$ copies of the binary tree $\bin_t$ to the leaves of the binary
  tree $\bin_s$ as follows. For each leaf $\vare$ of $\bin_s$, we
  coalesce a copy of $\bin_t$ at $\emptyset$, its root, to $\bin_s$ at
  $\vare$.

  Pick $k$ leaves $\ell_1,\dots,\ell_k$ of $\bin_n$, one from each of
  $k$ different copies of $\bin_t$. Let $v_1,\dots,v_k$ be the
  vertices of $\com_k$, and define $f\colon\com_k\to\bin_n$ by
  $f(v_i)=\ell_i$, $i=1,\dots,k$. We then have
  \[
  2t+2\leq d_{\bin} \big(f(v_i),f(v_j)\big)\leq
  \diam(\bin_{s+t})=2(s+t)
  \]
  for all $i\neq j$. Thus, $f$ has distortion at most
  $\frac{s+t}{t+1}$, which in turn is at most $1+\vare$ by the choice
  of $s$ and $t$.
\end{proof}

\begin{proof}[Proof of Theorem~\ref{mainthm:C}]
  It follows from Theorem~\ref{mainthm:A} that $\la(\bin_k)$ embeds
  with distortion at most~$6$ into a finite Hamming cube. In turn, by
  Lemma~\ref{lem:hamming-into-lamplighter-of-complete}, the Hamming
  cube $\ham_k$ embeds into $\la(\com_k)$ with distortion at
  most~$\frac32$. It remains to show that $\big(\la(\com_k)\big)_{k\in\bn}$
  equi-bi-Lipschitzly embeds into $\big(\la(\bin_k)\big)_{k\in\bn}$,
  but this is true due to Lemma~\ref{lem:complete-into-binary}
  combined with Lemma \ref{lem:complex-induced-map}.
\end{proof}

The equi-bi-Lipschitz embeddability of
$\big(\la(\com_k)\big)_{k\in\bn}$ into
$\big(\la(\bin_k)\big)_{k\in\bn}$ can be made quantitatively more
precise using Lemma~\ref{lem:even-more-complex-induced-map}.

\begin{prop}
  \label{prop:lamp-complete-into-binary}
  Let $k\in\bn$ and $\vare>0$. Then $\la(\com_k)$ embeds with
  distortion at most $1+\vare$ into $\la(\bin_{N})$ whenever
  $N>n+\log_2 n+1$ and $n\geq \log_2(k)\cdot\frac{1+\vare}{\vare}$.
\end{prop}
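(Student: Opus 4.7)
The plan is to lift the embedding $\com_k\hookrightarrow\bin_n$ from Lemma~\ref{lem:complete-into-binary} to an embedding $\la(\com_k)\hookrightarrow\la(\bin_N)$ by invoking Lemma~\ref{lem:even-more-complex-induced-map} with the auxiliary graph $Q$ chosen to be a smaller binary tree, picked so that the coalesced graph $\Ht$ sits isometrically inside $\bin_N$.

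I would begin by selecting integers $s,t\geq 0$ with $2^s\geq k$, $s+t\leq n$, and $\frac{s+t}{t+1}\leq 1+\vare$, exactly as in the proof of Lemma~\ref{lem:complete-into-binary}; these constraints are jointly satisfiable under the hypothesis $n\geq\log_2(k)\cdot\frac{1+\vare}{\vare}$. This yields a map $f\colon\com_k\to\bin_n$ sending distinct vertices to $k$ leaves $\ell_1,\dots,\ell_k$ of $\bin_{s+t}\subseteq\bin_n$ drawn from pairwise distinct copies of $\bin_t$, with
\[
2(t+1)\;\leq\; d_{\bin}\bigl(f(v_i),f(v_j)\bigr)\;\leq\; 2(s+t)\qquad(i\neq j).
\]
Thus $f$ has Lipschitz constants $a=2(t+1)$ and $b=2(s+t)$; the degenerate case $s\leq 1$, i.e.\ $k\leq 2$, is trivial and may be handled separately.

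Next I would set $r=N-n$ and take $Q=\bin_r$ with distinguished root $v_0=\emptyset$. The hypothesis $N>n+\log_2 n+1$ gives $r>\log_2 n+1$, whence $|V(\bin_r)|=2^{r+1}-1\geq 4n-1\geq b$. By the remark following Lemma~\ref{lem:even-more-complex-induced-map}, I can then start from $W=\{\emptyset\}$ and adjoin one neighbouring vertex at a time (each step increasing $c=\tsp_{\bin_r}(\emptyset,W,\emptyset)+|W|$ by at most $3$) until I reach a finite $W$ with $a\leq c\leq b$; this is feasible because $b-a=2s-2\geq 2$. Lemma~\ref{lem:even-more-complex-induced-map} then produces a map $\tilde{f}\colon\la(\com_k)\to\la(\Ht)$ of distortion at most $b/a=(s+t)/(t+1)\leq 1+\vare$.

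The final step is to observe that $\Ht$, obtained by attaching a copy of $\bin_r$ at each of the $k$ leaves $\ell_1,\dots,\ell_k$ of $\bin_n$, sits naturally as a connected subtree of $\bin_{n+r}=\bin_N$. Because both are trees, the unique path in $\bin_N$ between any two vertices of $\Ht$ stays inside $\Ht$, so $\Ht$ is isometric in $\bin_N$. By the closed TSP-formula of Theorem~\ref{thm:tsp-in-tree}, which depends only on the edge-sets $[x,y]$ and $[x,A]$, this gives $\tsp_{\Ht}=\tsp_{\bin_N}$ on configurations supported in $\Ht$; combined with Proposition~\ref{prop:graph-metric-in-lamplighter}, the natural inclusion $\la(\Ht)\hookrightarrow\la(\bin_N)$ is isometric, and composing with $\tilde{f}$ completes the plan. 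The main technical point is precisely this last observation: that the induced inclusion of lamplighter graphs is isometric, which rests on the tree-specific TSP formula and on the fact that geodesics in $\bin_N$ between vertices of $\Ht$ never stray outside $\Ht$.
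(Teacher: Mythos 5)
Your proposal is correct and follows essentially the same route as the paper's proof: the embedding $f$ from Lemma~\ref{lem:complete-into-binary}, the auxiliary pointed tree $Q=(\bin_r,\emptyset)$ fed into Lemma~\ref{lem:even-more-complex-induced-map} together with its remark to tune $c=\tsp_Q(v_0,W,v_0)+\abs{W}$ into $[a,b]$, and the observation that the coalesced graph $\Ht$ sits isometrically inside $\bin_N$, whence the induced inclusion of lamplighter graphs is isometric. One small point: in the last step you should take $s+t=n$ (i.e.\ $t=n-s$, which only decreases $\frac{s+t}{t+1}$), because if $s+t<n$ the attachment points $\ell_i$ are internal vertices of $\bin_n$ with no free neighbours in $\bin_N$, so the disjoint copies of $\bin_r$ could not be hung there and $\Ht$ would not be a subtree of $\bin_N$; the paper avoids this by ensuring the image of $f$ consists of leaves of $\bin_n$.
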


\begin{proof}
  For $k=1,2$, it is clear that $\com_k$ embeds isometrically into
  $\bin_k$, and hence the same holds for the corresponding lamplighter
  graphs. We 
  now assume that $k\geq 3$ and follow the notation from the proof of
  Lemma~\ref{lem:complete-into-binary}. We have $s\geq 2$, and hence
  $2(s+t)-(2t+2)=2s-2\geq 2$. Choose $r\in\bn$ with $2^r>2(s+t)$, and
  let $Q$ be the pointed graph $(\bin_r,\emptyset)$. It follows from
  Lemma~\ref{lem:even-more-complex-induced-map} and the subsequent remark that
  there is a subset $W$ of the vertices of $Q$ such that the map
  $\tilde{f}\colon \la(\com_k)\to\la\big(\widetilde{\bin}_n\big)$, induced by
  $f$, $Q$ and $W$, has distortion at most
  $\frac{s+t}{t+1}<1+\vare$. Finally observe that, since the image of
  $f$ is contained in the set of leaves of $\bin_n$, it follows that
  $\widetilde{\bin}_n$ isometrically embeds into $\bin_{n+r}$, which
  in turn implies that $\la\big(\widetilde{\bin}_n\big)$ isometrically
  embeds into $\la(\bin_{n+r})$.
\end{proof}

\section{Conclusions}

Assume that a sequence $(G_k)_{k\in\bn}$ of graphs equi-bi-Lipschitzly contains
$(K_k)_{k\in\bn}$. It follows then from Theorem~\ref{mainthm:C} and
Lemma~\ref{lem:complex-induced-map}, together with the remark
thereafter, that the sequence $(\ham_k)_{k\in\bn}$ of Hamming cubes
equi-bi-Lipschitzly embeds into $(\la(G_k))_{k\in\bn}$. We do not know
if the converse holds.
 
\begin{problem}
  \label{problem:7.1}
  Given a sequence $(G_k)_{k\in\bn}$ of graphs, if the Hamming cubes
  $(\ham_k)_{k\in\bn}$ equi-bi-Lipschitzly embed into
  $(\la(G_k))_{k\in\bn}$, does it follow that $(\com_k)_{k\in\bn}$
  equi-bi-Lipschitzly embeds into $(G_k)_{k\in\bn}$?
\end{problem} 

The following tree might be a counterexample to
Problem~\ref{problem:7.1}.

\begin{ex}
  We construct a tree which can be seen as a ``binary tree with
  variable-size legs'' as follows. Given $k\in\bn$ and
  $\thickbar{\ell}=(\ell_1,\ell_2,\dots, \ell_k)\in\bn^k$, replace
  each edge on the $j^{\text{th}}$ level of the binary tree of length
  $k$ by a path of length $\ell_j$, where by an edge on the
  $j^{\text{th}}$ level we mean an edge such that the distance from
  its farthest endpoint to the root is $j$. Denote by
  $\bin_{\thickbar{\ell}}$ the new tree, of length
  $\ell=\sum_{i=1}^k{\ell_i}$, thus obtained. The tree
  $\bin_{\thickbar{\ell}}$, with $\thickbar{\ell}=(4,2,1)$ is the tree
  of length $\ell=7$ depicted in the illustration below.

  \begin{figure}[ht]
    \label{fig:sstree}
    \caption{$\mathrm{B}_{\bar{\ell}}$, with $\bar{\ell}=(4,2,1)$}
    \vskip .5cm
    \includegraphics[scale=.3]{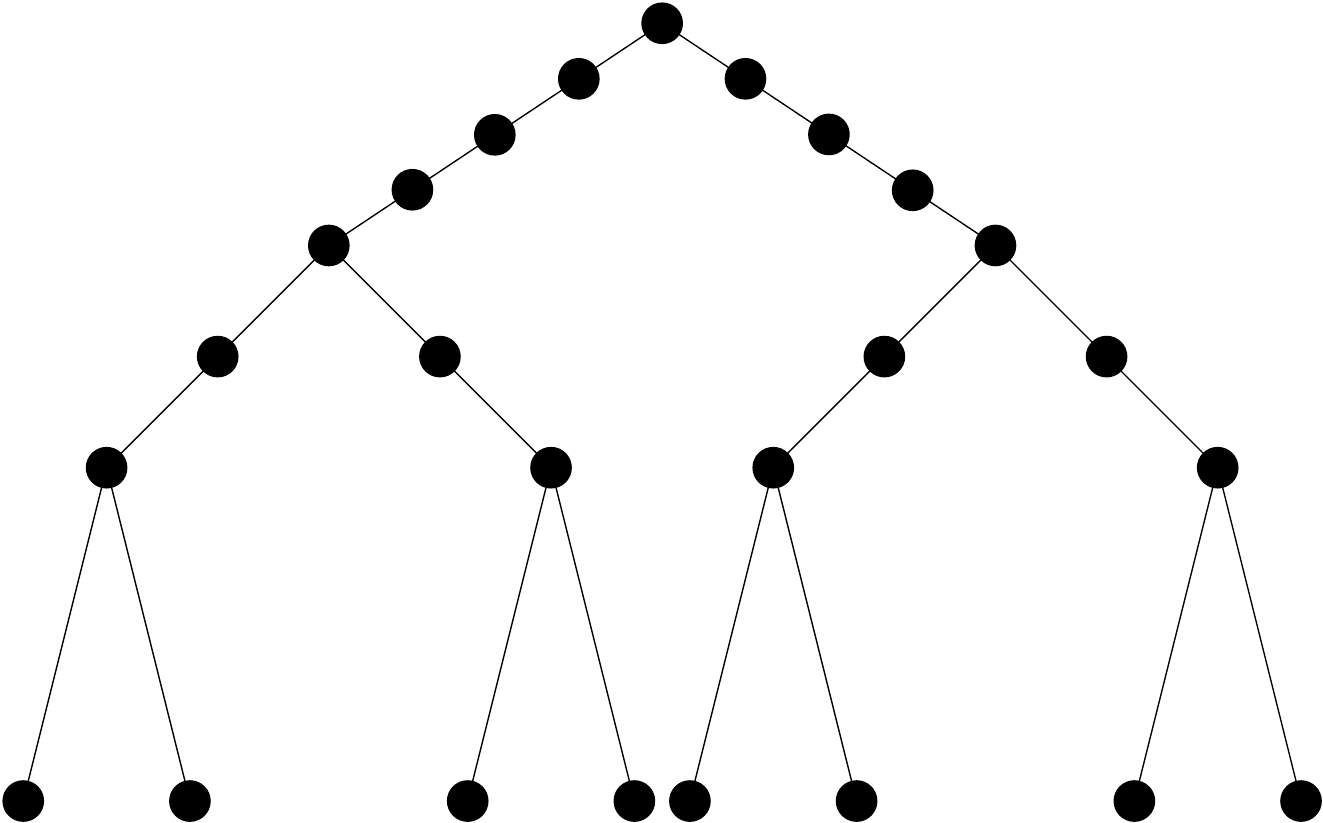}
  \end{figure}
\end{ex}

If we choose for every $k\in\bn$, the sequence
$\thickbar{\ell}^{(k)}=\big(\ell_1^{(k)}, \ell_2^{(k)},\dots,
\ell^{(k)}_k\big)$ so that $\ell_1^{(k)}$  is chosen large enough
compared to $\ell_2^{(k)}$, $\ell_2^{(k)}$ is chosen large enough
compared to $\ell_3^{(k)}$, \etc it is not hard, but cumbersome, to
prove that the sequence
$\big(\bin_{\thickbar{\ell}^{(k)}}\big)_{k\in\bn}$ does not
equi-bi-Lipschitzly contain $(\com_k)_{k\in\bn}$. So for this example
to become a counterexample to Problem~\ref{problem:7.1}, we need a positive
answer to the following question.

\begin{problem}
  \label{problem:7.3}
  Let $\big(\bin_{\thickbar{\ell}^{(k)}}\big)_{k\in\bn}$ constructed
  as in the description above. Does $(\ham_k)_{k\in\bn}$
  equi-bi-Lipschitzly embed into
  $\big(\la(\bin_{\thickbar{\ell}^{(k)}})\big)_{k\in\bn}$?
\end{problem} 

Any counterexample to Problem~\ref{problem:7.1} would be a
counterexample to the following problem.

\begin{problem}
  \label{problem:7.4} 
  If $(G_k)_{k\in\bn}$ is a sequence of graphs which does not
  equi-bi-Lipschitzly contain $(\com_k)_{k\in\bn}$, and if $X$ is a
  non-reflexive Banach space, does it follow that the sequence
  $\big(\la(G_k)\big)_{k\in\bn}$ equi-bi-Lipschitzly embed into $X$?
\end{problem}

Indeed, there are non-reflexive Banach spaces $X$ with non-trivial
type (\cf~\cite{james:74},~\cite{james:78} or~\cite{pisier-xu:87}). By
the observation at the beginning of this section, these spaces cannot
equi-bi-Lipschitzly contain sequences of graphs which
equi-bi-Lipschitzly contain $(\ham_k)_{k\in\bn}$.

\begin{bibdiv}
  \begin{biblist}

\bib{anv:10}{article}{
  author={Austin, Tim},
  author={Naor, Assaf},
  author={Valette, Alain},
  title={The Euclidean distortion of the lamplighter group},
  journal={Discrete Comput. Geom.},
  volume={44},
  date={2010},
  number={1},
  pages={55--74},
  issn={0179-5376},
  review={\MR {2639818}},
  doi={10.1007/s00454-009-9162-6},
}

\bib{bollobas:98}{book}{
  author={Bollob\'{a}s, B\'{e}la},
  title={Modern graph theory},
  series={Graduate Texts in Mathematics},
  volume={184},
  publisher={Springer-Verlag, New York},
  date={1998},
  pages={xiv+394},
  isbn={0-387-98488-7},
  review={\MR {1633290}},
  doi={10.1007/978-1-4612-0619-4},
}

\bib{bourgain:86}{article}{
  author={Bourgain, Jean},
  title={The metrical interpretation of superreflexivity in Banach spaces},
  journal={Israel J. Math.},
  volume={56},
  date={1986},
  number={2},
  pages={222--230},
  issn={0021-2172},
  review={\MR {880292}},
  doi={10.1007/BF02766125},
}

\bib{bmw:86}{article}{
  author={Bourgain, Jean},
  author={Milman, Vitali},
  author={Wolfson, Haim},
  title={On type of metric spaces},
  journal={Trans. Amer. Math. Soc.},
  volume={294},
  date={1986},
  number={1},
  pages={295--317},
  issn={0002-9947},
  review={\MR {819949}},
  doi={10.2307/2000132},
}

\bib{csv:12}{article}{
  author={Cornulier, Yves},
  author={Stalder, Yves},
  author={Valette, Alain},
  title={Proper actions of wreath products and generalizations},
  journal={Trans. Amer. Math. Soc.},
  volume={364},
  date={2012},
  number={6},
  pages={3159--3184},
  issn={0002-9947},
  review={\MR {2888241}},
  doi={10.1090/S0002-9947-2012-05475-4},
}

\bib{deza-laurent:97}{book}{
  author={Deza, Michel Marie},
  author={Laurent, Monique},
  title={Geometry of cuts and metrics},
  series={Algorithms and Combinatorics},
  volume={15},
  publisher={Springer-Verlag, Berlin},
  date={1997},
  pages={xii+587},
  isbn={3-540-61611-X},
  review={\MR {1460488}},
  doi={10.1007/978-3-642-04295-9},
}

\bib{donno:15}{article}{
  author={Donno, Alfredo},
  title={Generalized wreath products of graphs and groups},
  journal={Graphs Combin.},
  volume={31},
  date={2015},
  number={4},
  pages={915--926},
  issn={0911-0119},
  review={\MR {3357664}},
  doi={10.1007/s00373-014-1414-4},
}

\bib{got:18}{collection}{
  title={Handbook of discrete and computational geometry},
  series={Discrete Mathematics and its Applications (Boca Raton)},
  editor={Goodman, Jacob E.},
  editor={O'Rourke, Joseph},
  editor={T\'{o}th, Csaba D.},
  note={Third edition of [ MR1730156]},
  publisher={CRC Press, Boca Raton, FL},
  date={2018},
  pages={xxi+1927},
  isbn={978-1-4987-1139-5},
  review={\MR {3793131}},
}

\bib{james:74}{article}{
   author={James, Robert C.},
   title={A nonreflexive Banach space that is uniformly
     nonoctahedral},
   journal={Israel J. Math.},
   volume={18},
   date={1974},
   pages={145--155},
   issn={0021-2172},
   review={\MR{0355538}},
   doi={10.1007/BF02756869},
}

\bib{james:78}{article}{
   author={James, R. C.},
   title={Nonreflexive spaces of type $2$},
   journal={Israel J. Math.},
   volume={30},
   date={1978},
   number={1-2},
   pages={1--13},
   issn={0021-2172},
   review={\MR{508249}},
   doi={10.1007/BF02760825},
}

\bib{kv:83}{article}{
  author={Ka\u {\i }manovich, Vadim A.},
  author={Vershik, Aantoli\u {\i } M.},
  title={Random walks on discrete groups: boundary and entropy},
  journal={Ann. Probab.},
  volume={11},
  date={1983},
  number={3},
  pages={457--490},
  issn={0091-1798},
  review={\MR {704539}},
}

\bib{lnp:09}{article}{
  author={Lee, James R.},
  author={Naor, Assaf},
  author={Peres, Yuval},
  title={Trees and Markov convexity},
  journal={Geom. Funct. Anal.},
  volume={18},
  date={2009},
  number={5},
  pages={1609--1659},
  issn={1016-443X},
  review={\MR {2481738}},
  doi={10.1007/s00039-008-0689-0},
}

\bib{linial-magen:00}{article}{
  author={Linial, Nathan},
  author={Magen, Avner},
  title={Least-distortion Euclidean embeddings of graphs: products of cycles and expanders},
  journal={J. Combin. Theory Ser. B},
  volume={79},
  date={2000},
  number={2},
  pages={157--171},
  issn={0095-8956},
  review={\MR {1769197}},
  doi={10.1006/jctb.2000.1953},
}

\bib{lpp:96}{article}{
  author={Lyons, Russell},
  author={Pemantle, Robin},
  author={Peres, Yuval},
  title={Random walks on the lamplighter group},
  journal={Ann. Probab.},
  volume={24},
  date={1996},
  number={4},
  pages={1993--2006},
  issn={0091-1798},
  review={\MR {1415237}},
  doi={10.1214/aop/1041903214},
}

\bib{mendel-naor:08}{article}{
  author={Mendel, Manor},
  author={Naor, Assaf},
  title={Metric cotype},
  journal={Ann. of Math. (2)},
  volume={168},
  date={2008},
  pages={247--298},
  issn={0003-486X},
  review={\MR {2415403}},
  doi={10.4007/annals.2008.168.247},
}

\bib{naor:10}{article}{
  author={Naor, Assaf},
  title={$L_1$ embeddings of the Heisenberg group and fast estimation of graph isoperimetry},
  conference={ title={Proceedings of the International Congress of Mathematicians. Volume III}, },
  book={ publisher={Hindustan Book Agency, New Delhi}, },
  date={2010},
  pages={1549--1575},
  review={\MR {2827855}},
}

\bib{naor-peres:08}{article}{
  author={Naor, Assaf},
  author={Peres, Yuval},
  title={Embeddings of discrete groups and the speed of random walks},
  journal={Int. Math. Res. Not. IMRN},
  date={2008},
  pages={Art. ID rnn 076, 34},
  issn={1073-7928},
  review={\MR {2439557}},
  doi={10.1093/imrn/rnn076},
}

\bib{naor-peres:11}{article}{
  author={Naor, Assaf},
  author={Peres, Yuval},
  title={$L_p$ compression, traveling salesmen, and stable walks},
  journal={Duke Math. J.},
  volume={157},
  date={2011},
  number={1},
  pages={53--108},
  issn={0012-7094},
  review={\MR {2783928}},
  doi={10.1215/00127094-2011-002},
}

\bib{ostrovskii:11}{article}{
  author={Ostrovskii, Mikhail I.},
  title={On metric characterizations of some classes of {B}anach
              spaces},
  journal={C. R. Acad. Bulgare Sci.},
  volume={64},
  date={2011},
  pages={775--784},
  issn={1310-1331},
  review={\MR {2884975}},
  doi={},
 }

\bib{ostrovskii:14}{article}{
  author={Ostrovskii, Mikhail I.},
  title={Metric characterizations of superreflexivity in terms of word hyperbolic groups and finite graphs},
  journal={Anal. Geom. Metr. Spaces},
  volume={2},
  date={2014},
  pages={154--168},
  issn={2299-3274},
  review={\MR {3210894}},
  doi={10.2478/agms-2014-0005},
}

\bib{RO2018}{article}{
  AUTHOR = {Ostrovskii, Mikhail I.},
  AUTHOR = {Randrianantoanina, Beata},
  TITLE = {A characterization of superreflexivity through
    embeddings of lamplighter groups},
  YEAR = {2018},
  EPRINT ={arXiv:1807.06692}
}

\bib{pisier-xu:87}{article}{
  author={Pisier, Gilles},
  author={Xu, Quan Hua},
  title={Random series in the real interpolation spaces between the spaces $v_p$},
  conference={title={Geometrical aspects of functional analysis (1985/86)}, },
  book={ series={Lecture Notes in Math.}, volume={1267}, publisher={Springer, Berlin}, },
  date={1987},
  pages={185--209},
  review={\MR {907695}},
  doi={10.1007/BFb0078146},
}

\bib{stein-taback:12}{article}{
  author={Stein, Melanie},
  author={Taback, Jennifer},
  title={Metric properties of Diestel-Leader groups},
  journal={Michigan Math. J.},
  volume={62},
  date={2013},
  number={2},
  pages={365--386},
  issn={0026-2285},
  review={\MR {3079268}},
  doi={10.1307/mmj/1370870377},
}

  \end{biblist}
\end{bibdiv}

\end{document}